\documentclass[12pt]{article}
\title{Translating between NIP integral domains and topological fields}
\author{Will Johnson}

\usepackage{amsmath, amssymb, amsthm}    	% need for subequations
\usepackage{fullpage} 	% without this, will have wide math-article-paper margins
\usepackage{amscd}
\usepackage{hyperref}
\usepackage[all]{xy}
\usepackage[T1]{fontenc}
\usepackage{lmodern}
\usepackage{centernot}
\usepackage{enumitem}
\usepackage{titlefoot}

\DeclareMathOperator*{\ind}{\raise0.2ex\hbox{\ooalign{\hidewidth$\vert$\hidewidth\cr\raise-0.9ex\hbox{$\smile$}}}}

\newcommand{\ACVF}{\mathrm{ACVF}}

\newcommand{\ba}{{\bar{a}}}
\newcommand{\bb}{{\bar{b}}}

\newcommand{\bx}{{\bar{x}}}
\newcommand{\by}{{\bar{y}}}

\newcommand{\Frac}{\operatorname{Frac}}

\newcommand{\br}{\operatorname{br}}

\newcommand{\characteristic}{\operatorname{char}}

\newcommand{\dpr}{\operatorname{dp-rk}}

\newtheorem{theorem}{Theorem}[section] % numbered like the section

\newtheorem{lemma}[theorem]{Lemma}

\newtheorem{corollary}[theorem]{Corollary}
\newtheorem{fact}[theorem]{Fact}

\newtheorem{conjecture}[theorem]{Conjecture}
\newtheorem{unlikely}[theorem]{Unlikely Conjecture}

\newtheorem{proposition}[theorem]{Proposition}
\newtheorem{proposition-eh}[theorem]{Proposition(?)}
\newtheorem*{theorem-star}{Theorem}
\newtheorem*{conjecture-star}{Conjecture}
\newtheorem*{fact-star}{Fact}
\newtheorem*{lemma-star}{Lemma}
\newtheorem{claim}[theorem]{Claim}

\theoremstyle{definition}
\newtheorem{definition}[theorem]{Definition}
\newtheorem{example}[theorem]{Example}

\newtheorem{remark}[theorem]{Remark}
\newtheorem{goal}[theorem]{Goal}

\theoremstyle{remark}

\newtheorem*{acknowledgment}{Acknowledgments}

\newcommand{\Aa}{\mathbb{A}}
\newcommand{\Qq}{\mathbb{Q}}

\newcommand{\Rr}{\mathbb{R}}

\newcommand{\Nn}{\mathbb{N}}
\newcommand{\Cc}{\mathbb{C}}

\newcommand{\Ff}{\mathbb{F}}
\newcommand{\Pp}{\mathbb{P}}

\newcommand{\Oo}{\mathcal{O}}
\newcommand{\mm}{\mathfrak{m}}
\newcommand{\pp}{\mathfrak{p}}
\newcommand{\qq}{\mathfrak{q}}
\newcommand{\rr}{\mathfrak{r}}

\newenvironment{claimproof}[1][\proofname]
               {
                 \proof[#1]
                 
               }
               {
                 \endproof
               }

\begin{document}

\maketitle\unmarkedfntext{
  \emph{2020 Mathematical Subject Classification}: 03C60, 12L12.

  \emph{Key words and phrases}: NIP rings, definable field topologies
  }

\begin{abstract}
  We prove that definable ring topologies on NIP fields are closely
  connected to NIP integral domains.  More precisely, we show that up to elementary equivalence, any NIP topological field arises from an NIP integral domain.  As an application, we prove several
  results about definable ring topologies on NIP fields, including the
  following.  Let $K$ be an NIP field or expansion of a field.  Let
  $\tau$ be a definable ring topology on $K$.  Then $\tau$ is a
  \emph{field} topology, and $\tau$ is locally bounded.  If $K$ has
  characteristic $p$ or finite dp-rank, then $\tau$ is ``generalized
  t-henselian'' in the sense of Dittman, Walsberg, and Ye, meaning
  that the implicit function theorem holds for polynomials.  If $K$
  has finite dp-rank, then $\tau$ must be a topology of ``finite
  breadth'' (a $W_n$-topology).  Using these techniques, we give some
  reformulations of the conjecture that NIP local rings are henselian.
\end{abstract}

\section{Introduction}
In this paper, rings are assumed to be commutative and unital.  A
\emph{ring topology} or \emph{field topology} on a field $K$ is a
Hausdorff, non-discrete topology such that the ring operations or
field operations are continuous, respectively.  A topology on a
definable set $D$ is \emph{definable} if some uniformly definable
family of sets is a basis of opens.

The class of \emph{NIP theories} has played a major
role in contemporary model theory.  See Simon's book \cite{NIPguide} for the definition and motivation of NIP.  Many NIP theories of fields come
equipped with definable field topologies, such as the order topology
on real closed fields like $\Rr$ or the valuation topology on
$p$-adically closed fields like $\Qq_p$.
%% In
%% fact, there is evidence suggesting that \emph{every} unstable NIP
%% field admits a definable field topology
%% (Remark~\ref{conj-tops-exist}).

The goal of this note is to show a close connection between NIP
topological fields and NIP integral domains.  Recall the following
general construction of ring topologies from integral domains
\cite[Example~1.2]{PZ}:
\begin{fact} \label{pz-fact}
  Let $R$ be an integral domain with fraction field $K \ne R$.  There
  is a ring topology $\tau_R$ on $K$ such that the non-zero ideals of
  $R$ form a neighborhood basis of 0.
\end{fact}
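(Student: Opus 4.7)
The plan is to define $\tau_R$ directly by declaring the translates $a + I$ of nonzero ideals $I$ of $R$ (for $a \in K$) to be a basis. To see this really is a basis on $K$, note that if $c \in a + I$, then $c + I = a + I$, so each set $a + I$ is indeed open in the generated topology; intersections of basic opens at a point $c$ are handled by $c + (I_1 \cap I_2) \subseteq (a + I_1) \cap (b + I_2)$. The verification that $\tau_R$ is a ring topology then reduces to a short list of elementary ideal-theoretic inclusions.

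First I would verify that the nonzero ideals of $R$ form a filter base on $K$. This amounts to showing that the intersection of two nonzero ideals is itself nonzero, which uses crucially that $R$ is an integral domain: $I_1 \cap I_2 \supseteq I_1 I_2 \ne 0$. Next, the continuity of the ring operations at the origin is essentially tautological: for a nonzero ideal $I$, the identities $I + I = I$ and $-I = I$ handle continuity of addition and negation, while $I \cdot I \subseteq I$ handles continuity of multiplication at $(0,0)$. Continuity of the scalar-multiplication map $x \mapsto ax$ at $0$ for a fixed $a \in K$ requires a touch more care: writing $a = a_1/a_2$ with $a_1, a_2 \in R$ and $a_2 \ne 0$, the ideal $a_2 I$ is nonzero and satisfies $a \cdot (a_2 I) = a_1 I \subseteq I$, giving the required neighborhood. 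Non-discreteness is immediate, since every nonzero ideal contains a nonzero element and hence $\{0\}$ is not a neighborhood of $0$.

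The main content, and the main place I expect to work a little, is the Hausdorff condition, which amounts to $\bigcap I = \{0\}$ in $K$ as $I$ ranges over nonzero ideals of $R$. For $x \in K \setminus R$ there is nothing to show, since every ideal of $R$ is contained in $R$. For $x \in R \setminus \{0\}$ this is precisely where the hypothesis $K \ne R$ enters: since $R$ is not a field, it contains a nonzero non-unit $y$, and I would then check that the ideal $(xy)$ is nonzero yet does not contain $x$---for otherwise $x = xyr$ for some $r \in R$ would yield $yr = 1$, contradicting the non-invertibility of $y$.

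The main obstacle, such as it is, is the Hausdorff step: it is the only place where the assumption $K \ne R$ is genuinely used, and without the integral-domain hypothesis one also loses the filter-base property. Every remaining axiom follows formally from the ideal structure of $R$, so once these two hypotheses are in play the proof is essentially a bookkeeping exercise in ideal inclusions.
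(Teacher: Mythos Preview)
Your proposal is correct and takes the natural direct-verification approach. The paper does not actually prove this fact in detail: it cites Prestel--Ziegler \cite[Example~1.2]{PZ} and later (Fact~\ref{r-adic}) remarks that it ``is easy to verify directly,'' giving only the equivalence of the two neighborhood bases $\{I : I \lhd R,\ I \ne 0\}$ and $\{cR : c \in K^\times\}$; your ideal-theoretic checks (filter base via $I_1 I_2 \ne 0$, continuity via $I+I=I$, $I\cdot I\subseteq I$, and $a\cdot(a_2 I)\subseteq I$, and Hausdorffness via the non-unit trick) are exactly the verification the paper is gesturing at.
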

Continuing an abuse of terminology from \cite{mana}, we call $\tau_R$
the \emph{$R$-adic topology} on $K$.  For example, when $R$ is a
non-trivial valuation ring, the $R$-adic topology is the valuation
topology.

In the case where $R$ is an NIP integral domain, we get an NIP
topological field:
\begin{fact}[{= Corollary~\ref{new-cor}}] \label{main-fact}
  Let $R$ be an NIP integral domain with fraction field $K \ne R$.
  \begin{enumerate}
  \item The $R$-adic topology $\tau_R$ is a field topology, not just a
    ring topology (division is continuous).
  \item The $R$-adic topology is definable in the pair $(K,R)$, which
    is an NIP expansion of $K$.
  \end{enumerate}
\end{fact}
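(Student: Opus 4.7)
The plan is to deduce this result from the paper's main theorem (announced in the abstract): every definable ring topology on an NIP expansion of a field is automatically a field topology. Given that black box, essentially all the work consists of verifying part (2) --- the NIP-ness of the pair $(K,R)$ and the definability of $\tau_R$ in it --- after which part (1) will drop out. So my order would be to prove (2) first, then apply the main theorem to obtain (1).

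For NIP-ness of $(K,R)$, I would observe that $K = \Frac(R)$ is interpretable in the pure ring $R$ as the quotient $(R \times (R \setminus \{0\}))/{\sim}$ with $(a,b) \sim (c,d) \iff ad = bc$, with the natural embedding $R \hookrightarrow K$, $r \mapsto [r{:}1]$, making the predicate for $R$ inside $K$ interpretable as well. Hence $(K,R)$ is interpretable in the pure ring $R$; since NIP is preserved under interpretation, $(K,R)$ is NIP. For definability of $\tau_R$, the point is that, although arbitrary ideals of $R$ are not uniformly definable, every nonzero ideal contains a nonzero principal ideal $aR$, so the uniformly definable family $\{aR : a \in R \setminus \{0\}\}$ --- defined by $\exists r \in R\ (x = a r)$ --- already gives a neighborhood basis at $0$ by Fact~\ref{pz-fact}. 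Translates yield definable bases at all other points, so $\tau_R$ is a definable topology in $(K,R)$.

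With (2) in hand, part (1) is a one-line application: Fact~\ref{pz-fact} tells us $\tau_R$ is a ring topology, and (2) says it is definable in the NIP expansion $(K,R)$ of the field $K$; the main theorem then upgrades it to a field topology.

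The real obstacle is the main theorem itself, which is being used here as a black box; that is where all the NIP content lives. The corollary itself is purely formal once the main theorem is available --- the only minor subtleties are the two observations above (interpretability of $K$ in $R$, and the reduction from arbitrary nonzero ideals to principal ones), both standard.
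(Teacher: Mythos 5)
Your proposal is circular. You want to use, as a black box, the statement that every definable ring topology on an NIP expansion of a field is a field topology (Corollary~\ref{cor1.5} / Corollary~\ref{cor:r-to-f} in the paper). But look at how that corollary is actually proved: one passes to a saturated model, applies Theorem~\ref{main-thm2} to get an externally definable subring $R' \subsetneq K$ with $\tau = \tau_{R'}$, notes that $R'$ is NIP (Shelah's theorem on externally definable sets), and then concludes that $\tau_{R'}$ is a field topology \emph{by Corollary~\ref{new-cor}} --- which is exactly the statement you are trying to prove. Theorem~\ref{main-thm2} on its own only tells you that a definable ring topology is of the form $\tau_{R'}$ for some NIP domain $R'$; it says nothing about division being continuous. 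The upgrade from ``ring topology'' to ``field topology'' in Corollary~\ref{cor:r-to-f} comes entirely from Corollary~\ref{new-cor}, so you cannot invoke the former to derive the latter.

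The paper's actual argument for part (1) is short and direct and does not touch the main theorem at all. By Fact~\ref{j-fact}, $\tau_R$ is a field topology if and only if the Jacobson radical $J(R)$ is non-zero. By a theorem of Simon (\cite[Proposition~2.1]{halevi-delbee}, cited as Lemma~\ref{simon-sorta}), an NIP integral domain has only finitely many maximal ideals, and since $R$ is a domain that is not a field, each maximal ideal is non-zero; an intersection of finitely many non-zero ideals in a domain is non-zero (Remark~\ref{intersection}), so $J(R) \ne 0$. That is the whole content of part (1), and Simon's finiteness theorem is the NIP input you are missing. Your observations for part (2) --- that $(K,R)$ is interpretable in the pure ring $R$ and hence NIP, and that the principal ideals $aR$ already form a neighborhood basis so that $\tau_R$ is definable via $\{aR + b : a \in K^\times,\ b \in K\}$ --- are both correct and are exactly what the paper records in Remark~\ref{aff-basis}. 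So part (2) of your argument is fine; part (1) needs to be replaced by the Jacobson-radical argument, not deduced from downstream corollaries.
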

Fact~\ref{main-fact} is essentially trivial modulo prior results, but
for completeness we give the proof in Section~\ref{frtft} below.

Our main theorem shows that, on some level, \emph{every} NIP
topological field arises via this construction:
\begin{theorem}[{= Theorem~\ref{main-thm2}}] \label{main-thm}
  Let $(K,+,\cdot,\ldots)$ be an NIP expansion of a field.  Let $\tau$
  be a definable ring topology on $K$.  Suppose $K$ is sufficiently
  saturated.  Then there is an externally definable subring $R
  \subseteq K$ such that $\tau$ is $\tau_R$.
\end{theorem}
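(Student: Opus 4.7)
The strategy is to construct $R$ as the stabilizer $\{x \in K : xV \subseteq V\}$ of a carefully chosen externally definable neighborhood $V$ of $0$ in $K$, with $V$ being simultaneously an additive subgroup, multiplicatively closed, and a $\tau$-neighborhood. This generalizes the description $\Oo = \{x \in K : x\mm \subseteq \mm\}$ of the valuation ring in the valuation-topology case. For the order topology on a sufficiently saturated real closed field, it recovers the convex hull of $\Zz$ in $K$ by taking $V$ to be the infinitesimal subgroup $\{x \in K : |x| < 1/c\}$ for a parameter $c \in K' \succ K$ realizing the cut between the finite and infinite elements of $K$.

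To execute this, first fix a definable basis $\{U_t\}_{t \in T}$ of $\tau$-neighborhoods of $0$, closed under finite intersection and with shrinking functions witnessing continuity of $-$ and $\cdot$ at $(0,0)$. For a small base $A \subseteq K$, consider the type-definable set $\mu := \bigcap \{U : U \text{ an $A$-definable $\tau$-neighborhood of } 0\}$; continuity of the ring operations gives $\mu + \mu \subseteq \mu$, $\mu \cdot \mu \subseteq \mu$, and $K \cdot \mu \subseteq \mu$. The heart of the argument is to pass to a sufficiently saturated $K' \succ K$ and find an externally definable set $V = U_b \cap K$, for a suitable parameter $b \in K'$, such that $\mu \subseteq V$ and $V$ satisfies (i) $V + V \subseteq V$, (ii) $V \cdot V \subseteq V$, and (iii) $V$ is a $\tau$-neighborhood of $0$ in $K$. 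NIP enters essentially here: type-definable subgroups in NIP structures admit externally definable subgroup approximations, and honest definitions allow one to simultaneously arrange the multiplicative-closure condition.

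Granted such $V$, the subring $R := \{x \in K : xV \subseteq V\}$ is externally definable in $K$. Multiplicative closure of $R$ is immediate; additive closure follows from (i) via $(x+y)V \subseteq xV + yV \subseteq V + V = V$; and (ii) gives $V \subseteq R$, so that $V$ is a nonzero $R$-ideal. Verifying $\tau_R = \tau$ then has two parts: every $\tau$-neighborhood of $0$ contains $V$ (by the choice of $V$, after possibly enlarging $A$), and conversely every principal $R$-ideal $aR$ contains $aV$, a $\tau$-neighborhood for $a \ne 0$, with a topological-boundedness argument using $R \cdot V \subseteq V$ showing that the ideals $aR$ can be made arbitrarily small. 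The main obstacle is producing $V$ satisfying (i)--(iii): for valuation topologies this is essentially free, but for the order topology on a saturated RCF no definable open subgroup of $(K,+)$ exists, and NIP is precisely what enables extracting the subgroup structure externally. In the general NIP case the construction of $V$ likely combines a Hrushovski-style chain condition on type-definable subgroups with honest definitions to control all three properties of $V$ simultaneously.
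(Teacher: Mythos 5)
The central gap in your proposal is that it never confronts \emph{local boundedness}, which is both logically unavoidable and the hard part of the paper's argument. Observe that for any proper subring $R \subsetneq K$ with $\Frac(R) = K$, the topology $\tau_R$ is automatically locally bounded ($R$ itself is a bounded neighborhood of $0$, Fact~\ref{r-adic}(3)). So proving $\tau = \tau_R$ for some $R$ is \emph{equivalent} to proving that $\tau$ is locally bounded, plus some comparatively routine work. The paper isolates this as Theorem~\ref{loc-bdd-thm2} and calls it ``the core technical result of the paper''; the proof is a genuinely nontrivial combinatorial argument with ``special neighborhoods'' (descending intersections of basic neighborhoods that are $K_0$-linear subspaces), Lemma~\ref{extend-1} about extending independent families of subspaces, and an explicit contradiction with NIP via Claim~\ref{at-last}. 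Your proposal replaces all of this with the assertion that ``type-definable subgroups in NIP structures admit externally definable subgroup approximations, and honest definitions allow one to simultaneously arrange the multiplicative-closure condition.'' That is not a proof, and the cited toolkit does not obviously apply: the standard externally definable subgroup results in NIP theories concern subgroups of \emph{bounded index}, whereas the infinitesimal subgroup $\mu$ of $(K,+)$ generally has unbounded index. Nothing in your outline forces the approximating set $V$ (or the stabilizer $R$) to be $\tau$-bounded, and without that the verification $\tau_R = \tau$ collapses.

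Concretely, the verification step you sketch does not close. You write that ``every $\tau$-neighborhood of $0$ contains $V$ (by the choice of $V$, after possibly enlarging $A$),'' but this is impossible for a fixed $V$ that is itself a $\tau$-neighborhood of $0$ in a non-discrete topology. What is actually needed is that the sets $cR$ ($c \in K^\times$) are cofinal among $\tau$-neighborhoods of $0$, i.e.\ that $R$ is $\tau$-bounded; the observation $R \cdot V \subseteq V$ does not yield this unless $V$ is already known to be bounded and to contain a nonzero element, which brings you back to local boundedness. (Once local boundedness is in hand, $V$ bounded and $v_0 \in V \setminus\{0\}$ give $R \subseteq v_0^{-1}V$, hence $R$ bounded; but that is precisely the missing ingredient.) By contrast, the paper's construction after Theorem~\ref{loc-bdd-thm2} is quite elementary and does not use the stabilizer idea at all: it fixes a bounded basic neighborhood $U$, takes $R$ to be the union of all $K_0$-definable bounded sets, notes $R = \bigcup\{cU : c \in K_0^\times\}$ is a directed union of uniformly definable sets and hence externally definable, and sandwiches $U \subseteq R \subseteq cU$ to conclude $\tau = \tau_R$. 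Your stabilizer approach is a plausible \emph{alternative} way to package the final step (and it does correctly identify $R$ in the RCF and valuation examples), but as presented it defers the entire difficulty to an unproved approximation claim, and so does not constitute a proof of the theorem.
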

Although $R$ is not necessarily definable, the expansion
$(K,+,\cdot,\ldots,R)$ is NIP by a theorem of Shelah (see
\cite[Corollary~3.24]{NIPguide} or \cite{Shelah}).  In particular, $R$
is an NIP integral domain, completing the circle.
\begin{example} \label{ex1.4}
  If $K$ is a sufficiently saturated real closed field, then the order
  topology is induced by the externally definable valuation ring $R =
  \bigcup_{n \in \Nn} [-n,n]$.
\end{example}
Note that Theorem~\ref{main-thm} is a statement about \emph{ring}
topologies on $K$, rather than \emph{field} topologies.  But
Fact~\ref{main-fact} guarantees that $\tau_R$ is a field topology,
yielding the following surprising corollary:
\begin{corollary}[{= Corollary~\ref{cor:r-to-f}}] \label{cor1.5}
  Let $K$ be an NIP expansion of a field, and let $\tau$ be a
  definable ring topology on $K$.  Then $\tau$ is a field topology
  (division is continuous).
\end{corollary}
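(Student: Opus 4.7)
The plan is to reduce Corollary~\ref{cor1.5} to Theorem~\ref{main-thm} and Fact~\ref{main-fact}(1) via an elementary-extension argument that handles the saturation hypothesis of Theorem~\ref{main-thm}.

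First, I would observe that ``$\tau$ is a field topology'' is a first-order property of the parameters defining the basis. Since $\tau$ is definable, it has a uniformly definable basis $\{U(\bar z) : \bar z \in Z\}$. Continuity of inversion at every nonzero point is then equivalent to the first-order scheme asserting that for each $a \neq 0$ and each $\bar z$ with $a^{-1} \in U(\bar z)$, there exist parameters $\bar w$ with $a \in U(\bar w)$, $0 \notin U(\bar w)$, and $b^{-1} \in U(\bar z)$ for every $b \in U(\bar w)$. Being a ring topology is first-order in the basis parameters by the same reasoning. So we may replace $K$ by a sufficiently saturated elementary extension $K^* \succeq K$ equipped with the topology $\tau^*$ defined by the same formula: $\tau^*$ remains a definable ring topology on $K^*$, and the conclusion that $\tau^*$ is a field topology descends to the corresponding conclusion for $\tau$.

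Next, I would apply Theorem~\ref{main-thm} to $(K^*, \tau^*)$, obtaining an externally definable subring $R^* \subseteq K^*$ with $\tau^* = \tau_{R^*}$. By Shelah's theorem on externally definable sets (\cite[Corollary~3.24]{NIPguide}), the expansion $(K^*, R^*)$ is again NIP, so $R^*$ is an NIP integral domain. Moreover, non-discreteness of $\tau^* = \tau_{R^*}$ forces $R^*$ to have a proper nonzero ideal, so in particular $R^* \neq K^*$; hence the hypotheses of Fact~\ref{main-fact}(1) are met, and $\tau_{R^*}$ is a field topology. Combining this with the first-order descent above gives that $\tau$ is a field topology on $K$.

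The main obstacle in this argument is essentially bookkeeping rather than substance: the serious content lies in Theorem~\ref{main-thm} and Fact~\ref{main-fact}, and once those are in hand the corollary follows immediately, provided one carefully verifies that ``field topology'' is first-order in the defining parameters of the basis and therefore transfers between $K$ and its saturated elementary extension.
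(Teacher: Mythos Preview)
Your proposal is correct and follows essentially the same route as the paper: pass to a saturated elementary extension, invoke Theorem~\ref{main-thm} to write $\tau$ as $\tau_R$ for an externally definable (hence NIP) domain $R$, and then apply Fact~\ref{main-fact}(1) (equivalently Corollary~\ref{new-cor}). The paper's proof is terser about the legitimacy of the elementary-extension step, whereas you spell out why ``field topology'' transfers; your extra justification that $R^* \ne K^*$ is unnecessary once you cite the full statement of Theorem~\ref{main-thm2}, which already guarantees $R \subsetneq K$ and $\Frac(R)=K$.
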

Consequently, the reader can substitute ``ring topology'' for ``field
topology'' in the results below.

Theorem~\ref{main-thm} allows us to prove statements about NIP
topological fields by translating them into statements about NIP
integral domains.  In Subsections~\ref{subsec1}--\ref{subsec3}, we state three theorems
along these lines.
\subsection{Local boundedness} \label{subsec1}
First, recall the general notion of boundedness on topological fields
\cite[Section~2]{PZ}.  Let $K$ be a field and $\tau$ be a ring
topology on $K$.  A set $B \subseteq K$ is \emph{bounded} if for every
neighborhood $U \ni 0$, there is some non-zero $c$ with $cB \subseteq
U$.  We say that $\tau$ is \emph{locally bounded} if some neighborhood
of 0 is bounded.  The $R$-adic topology $\tau_R$ of Fact~\ref{pz-fact}
is always locally bounded \cite[Theorem~2.2]{PZ}.  Then the following
is an easy corollary of Theorem~\ref{main-thm}:
\begin{theorem}[{= Theorem~\ref{loc-bdd-thm2}}] \label{loc-bdd-thm}
  Let $\tau$ be a definable field topology on an NIP expansion of a
  field.  Then $\tau$ is locally bounded.
\end{theorem}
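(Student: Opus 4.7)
The plan is to combine Theorem~\ref{main-thm} with the fact that every $R$-adic topology is locally bounded (Fact~\ref{pz-fact} plus \cite[Theorem~2.2]{PZ}), using elementarity to bridge the gap that the witnessing subring is only externally definable.

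First I would observe that local boundedness of a definable topology is a first-order property of $K$. If $\{U_i\}_{i \in I}$ is a uniformly definable basis of neighborhoods of $0$, then ``$\tau$ is locally bounded'' is expressed by
\[
\exists i_0 \in I \ \forall i \in I \ \exists c \in K^{\times} \ \forall x \in U_{i_0} : cx \in U_i.
\]
Consequently, after passing to a sufficiently saturated elementary extension $K^* \succeq K$, on which the topology $\tau^*$ is defined by the same formulas, it suffices to verify that $\tau^*$ is locally bounded.

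Next I would apply Theorem~\ref{main-thm} in $K^*$ to obtain an externally definable subring $R \subseteq K^*$ with $\tau^* = \tau_R$. By the very definition of $\tau_R$, every nonzero ideal of $R$ is a basic open at $0$, so $R$ is itself a neighborhood of $0$ in $\tau^*$; hence some definable basic open $U_{i_0}$ lies inside $R$. The ring $R$ is moreover bounded in $\tau_R$: for any nonzero ideal $J$ of $R$ and any nonzero $c \in J$, one has $cR \subseteq J$. Since any subset of a bounded set is bounded, $U_{i_0}$ is also bounded, witnessing local boundedness of $\tau^*$. Applying the first-order transfer from the previous paragraph, $\tau$ is locally bounded on $K$.

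I do not foresee any real obstacle: every ingredient of the argument is either a direct invocation of Theorem~\ref{main-thm} or a routine unpacking of the definitions from \cite[Section~2]{PZ}. The one point meriting slight care is confirming that the natural encoding of ``locally bounded'' truly is first-order, but this is immediate from the uniform definability of the basis $\{U_i\}_{i \in I}$.
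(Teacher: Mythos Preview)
Your argument is circular. You invoke Theorem~\ref{main-thm} to produce the externally definable ring $R$, but in the paper the proof of Theorem~\ref{main-thm} (given as Theorem~\ref{main-thm2} in Section~\ref{secmt}) \emph{begins} by applying Theorem~\ref{loc-bdd-thm2} to obtain a bounded neighborhood $U \ni 0$; the ring $R$ is then constructed as the union of the $K_0$-definable bounded sets, which only makes sense once one knows the topology is locally bounded. The paper explicitly flags this dependency just after the statement of Theorem~\ref{loc-bdd-thm}: ``Actually, we will prove Theorem~\ref{loc-bdd-thm} first, and then deduce Theorem~\ref{main-thm} as a corollary, not vice versa.'' So your proposed proof assumes what it is trying to show.

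The paper's actual proof of local boundedness is a self-contained combinatorial argument in Section~\ref{sec:locbound}: after passing to a saturated model, one introduces ``special neighborhoods'' (countable intersections of basic neighborhoods that happen to be $K_0$-subspaces), and then assuming local boundedness fails one inductively builds arbitrarily long independent families of special neighborhoods using the linear-algebra Lemma~\ref{extend-1}. These are then approximated by independent families of basic (uniformly definable) neighborhoods, contradicting NIP. There is no shortcut via Theorem~\ref{main-thm}, because that theorem is downstream.
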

Actually, we will prove Theorem~\ref{loc-bdd-thm} first, and then
deduce Theorem~\ref{main-thm} as a corollary, not vice versa.

\subsection{Generalized t-henselianity} \label{subsec2}
In certain cases, NIP integral domains are known to be henselian local
rings:
\begin{fact} \label{fact1.7}
  Let $R$ be an NIP integral domain.
  \begin{enumerate}
  \item If $R$ is an $\Ff_p$-algebra, then $R$ is a henselian local
    ring \cite[Theorem~3.22]{fpcase}.
  \item If $R$ has finite dp-rank, then $R$ is a henselian local ring
    \cite[Theorem~1.3]{noetherian}.
  \end{enumerate}
\end{fact}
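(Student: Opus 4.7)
These two statements are cited from prior work, so I describe the approach I would take for each rather than reproving them from scratch.

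For part (1), the crucial input is the Kaplan--Scanlon--Wagner theorem that every NIP field of characteristic $p$ is Artin--Schreier closed. The plan has two stages. First I would show $R$ is local: using the Artin--Schreier map $\wp(x) = x^p - x$, the translates $\wp(R) + cR$ form a uniformly definable family of additive subgroups; if $R$ had two distinct maximal ideals they would yield too many subgroups of bounded index in $R/\wp(R)$, violating NIP by a Baldwin--Saxl-style count (or equivalently, the residue quotient $R/(\mm_1 \cap \mm_2) \cong k_1 \times k_2$ would support independent Artin--Schreier behaviour, contradicting Artin--Schreier closedness of a product of NIP fields). Second, once $R$ is local with residue field $k$, the field $k$ is NIP and hence Artin--Schreier closed; I would then lift simple roots from $k$ to $R$ by observing that the set of approximate solutions to a monic $f \in R[X]$ with a simple residue root forms a definable descending family of cosets of shrinking ideals, which must meet in a genuine root in a sufficiently saturated model.

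For part (2), the main tool is the theorem of Johnson (extended by Halevi--Hasson--Jahnke) that every dp-finite infinite field of the relevant kind carries a nontrivial definable henselian valuation. The plan is to pass to $K = \Frac(R)$, which remains dp-finite, and extract from $K$ a definable henselian valuation ring $V$ compatible with $R$. Using finite dp-rank on $R$ itself, not just on $K$, to bound the chain of primes in $R$ and to force $R$ and $V$ to be comparable, one identifies a localization of $R$ with $V$, and henselianity of $V$ then transfers back to $R$.

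The main obstacle in both parts is the same: the passage from the field $k$ or $K$ to the ring $R$. The NIP or dp-finiteness hypothesis has to be exploited \emph{inside} the ring to control ideals, chains, and approximation sequences, rather than merely on the residue or fraction field. This is where the cited papers concentrate their technical work, and where a self-contained reproof would need to develop the necessary ring-theoretic machinery (definable radicals, dp-rank of ideals, and externally definable valuation rings compatible with $R$).
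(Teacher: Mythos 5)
This statement is cited from prior work, not proved in the paper, so there is no internal proof to compare against; the paper defers entirely to \cite[Theorem~3.22]{fpcase} and \cite[Theorem~1.3]{noetherian}. Your sketch is therefore speculation about the cited proofs rather than a reconstruction of anything in this paper, and it should be flagged as such. That said, both sketches contain real gaps. In part~(1), the claim that $R/(\mm_1\cap\mm_2)\cong k_1\times k_2$ ``would support independent Artin--Schreier behaviour, contradicting Artin--Schreier closedness of a product of NIP fields'' does not give a contradiction: a product of fields is not a field, and if each $k_i$ is Artin--Schreier closed then $\wp$ is already surjective on $k_1\times k_2$, so there is no obstruction there. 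The Baldwin--Saxl variant is stated too vaguely to assess, and the lifting argument only produces a root modulo $\bigcap_n \mm^n$, which need not be a root of $f$ when the $\mm$-adic topology is not separated; some further idea is required to finish.

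In part~(2), the step ``one identifies a localization of $R$ with $V$'' cannot be correct in this generality. An NIP local domain of finite dp-rank is not in general a valuation ring or a localization that becomes one; the paper's own Example~\ref{non-V-example} exhibits a dp-rank~2 domain $R=\{x: v(x)\ge 0,\ v(\partial x)\ge 0\}$ that is local but not a valuation ring. Moreover, even if one found a definable henselian valuation ring $V\supseteq R$ with the same fraction field, henselianity does not pass from $V$ down to the subring $R$. The actual argument in \cite{noetherian}, as the present paper's Section~\ref{sec-reformulate} hints, goes via the ``pre-henselizing / henselizing class'' machinery and the elimination of ``problematic rings'' (domains with two maximal ideals and infinite residues), together with breadth bounds $\br(R)\le\dpr(R)$ --- a route quite different from reducing to a single definable henselian valuation on $\Frac(R)$. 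Your sketch correctly identifies that the difficulty lies in transferring NIP information from the field to the ring, but the mechanism you propose for doing so is not the one that works.
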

See \cite[Section~4.2]{NIPguide} for background on dp-rank.  For our
purposes, it suffices to think of dp-rank as a notion of dimension on
definable sets in NIP theories, similar to Morley rank in totally
transcendental theories.

The topologies induced by Henselian local domains happen to satisfy
the following property, isolated by Dittman, Walsberg, and Ye
\cite{hensquot2}:
\begin{definition} \label{pre-def}
  A topological field $(K,\tau)$ is \emph{generalized t-henselian} (or
  \emph{gt-henselian}) if it satisfies the implicit function theorem
  for polynomials: let $f_1,\ldots,f_m$ be polynomials in the
  variables $x_1,\ldots,x_n,y_1,\ldots,y_m$, and let $V$ be the
  zero-set $\{(\bx,\by) : f_1(\bx,\by) = \cdots = f_m(\bx,\by) = 0\}$.
  Let $(\bar{p},\bar{q}) \in K^{n+m}$ be a point at which the Jacobian
  matrix $\partial f_i / \partial y_j$ is invertible.  Then there are
  open neighborhods $U_1 \ni \bar{p}$ and $U_2 \ni \bar{q}$ such that
  $V \cap (U_1 \times U_2)$ is the graph of a continuous function $U_1
  \to U_2$.
\end{definition}
(Definition~\ref{pre-def} agrees with
\cite[Definition~8.1]{hensquot2} by Proposition~\ref{understand}
below.)  We will say more about gt-henselianity in Section~\ref{gth-sec}.
%% This prompted the following ``generalized henselianity conjecture,''
%% which generalizes the conjecture that NIP valuation rings are
%% henselian:
%% \begin{conjecture}[{\cite[Conjecture~1.2]{noetherian}}] \label{ghc}
%%   If $R$ is an NIP integral domain, then $R$ is a henselian local ring.
%% \end{conjecture}
Applying the machinery of Theorem~\ref{main-thm}, we get the following
corollary:
\begin{theorem}[{$\subseteq$ Proposition~\ref{gt-hens-prop}}]\label{gt-hens-thm}
  Let $K$ be an NIP expansion of a field and let $\tau$ be a definable
  field topology on $K$.  Suppose one of the following holds:
  \begin{enumerate}
  \item $\characteristic(K) = p$.
  \item $K$ has finite dp-rank.
%%   \item Conjecture~\ref{ghc} holds.
  \end{enumerate}
  Then $\tau$ is gt-henselian.
%%   Then the implicit function theorem for polynomials holds: let
%%   $f_1,\ldots,f_m$ be polynomials in the variables
%%   $x_1,\ldots,x_n,y_1,\ldots,y_m$, and let $V$ be the zero-set
%%   $\{(\bx,\by) : f_1(\bx,by) = \cdots = f_m(\bx,by) = 0\}$.  Let
%%   $(\bar{p},\bar{q}) \in K^{n+m}$ be a point at which the Jacobian
%%   matrix $\partial f_i / \partial y_j$ is invertible.  Then there are
%%   open neighborhods $U_1 \ni \bar{p}$ and $U_2 \ni \bar{q}$ such that
%%   $V \cap (U_1 \times U_2)$ is the graph of a continuous function $U_1 \to
%%   U_2$.
\end{theorem}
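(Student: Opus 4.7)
The plan is to chain together Theorem~\ref{main-thm} and Fact~\ref{fact1.7}. The statement that $(K,\tau)$ is gt-henselian is a first-order scheme of axioms in the language expanded by the defining formula for $\tau$ (one axiom per choice of arities and polynomial degrees, asserting that for every coefficient tuple and every point at which the Jacobian is invertible, suitable open neighborhoods exist on which an implicit function exists and is continuous). Consequently, gt-henselianity transfers between elementarily equivalent definable topological fields, and we may freely replace $K$ by a sufficiently saturated elementary extension so that Theorem~\ref{main-thm} applies.

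Applying Theorem~\ref{main-thm}, pick an externally definable subring $R \subseteq K$ with $\tau = \tau_R$. Since the expansion $(K,\ldots,R)$ is NIP by Shelah's theorem, $R$ is in particular an NIP integral domain. In case (1), $\characteristic(K) = p$ forces $R$ to be an $\Ff_p$-algebra, and Fact~\ref{fact1.7}(1) gives that $R$ is a henselian local ring. In case (2), one uses that the Shelah expansion preserves dp-rank, so $R$ still has finite dp-rank, and Fact~\ref{fact1.7}(2) again yields that $R$ is henselian local. Either way, we have reduced to showing that if $R$ is a henselian local domain with fraction field $K$, then $\tau_R$ is gt-henselian.

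This last step is a standard translation between Hensel's lemma for the ring $R$ and the implicit function theorem for the $\tau_R$-topology on $K$. Given polynomials $f_1,\ldots,f_m \in K[\bar x, \bar y]$ and a solution $(\bar p,\bar q) \in K^{n+m}$ at which the $y$-Jacobian is invertible, clear denominators and apply an affine change of variables to obtain an equivalent system $\tilde f_1,\ldots,\tilde f_m \in R[\bar x, \bar y]$ with solution in $R^{n+m}$ and Jacobian a unit of $R$ at that point. Hensel's lemma for $R$ then lifts every $\bar p'$ sufficiently close to $\bar p$ (in the sense of lying in $\bar p + I^n$ for some nonzero ideal $I \subseteq R$) to a unique $\bar q' \in \bar q + \mm^m$ solving the system; applying it at progressively smaller ideals shows $\bar p' \mapsto \bar q'$ is continuous in $\tau_R$.

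The main obstacle is the bookkeeping in the final paragraph: passing from a $K$-polynomial system with an abstract definable topology to an $R$-polynomial system with ideal-adic data, and then extracting continuity from nested applications of Hensel's lemma. This is conceptually routine but notationally delicate, and it essentially reproduces arguments already present in \cite{hensquot2}; the real content of the theorem sits entirely in the combination of Theorem~\ref{main-thm} with Fact~\ref{fact1.7}.
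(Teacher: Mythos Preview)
Your proposal is correct and follows the same route as the paper: pass to a saturated model, apply Theorem~\ref{main-thm} to get $\tau = \tau_R$ for an externally definable (hence NIP) domain $R$, invoke Fact~\ref{fact1.7} to see that $R$ is henselian local, and conclude that $\tau_R$ is gt-henselian. The only difference is that the paper black-boxes your final paragraph as Fact~\ref{MCF}(2) (i.e., \cite[Proposition~8.5]{hensquot2}) rather than re-deriving the implicit function theorem from Hensel's lemma.
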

%% The implicit function theorem just described is equivalent to
%% \emph{generalized t-henselianity} (gt-henselianity) in the sense of
%% Dittman, Walsberg, and Ye \cite{hensquot2}.  We will say more about
%% this in Section~\ref{gt-hens-sec}.

%% With some additional work, we get two new reformulations of the
%% generalized henselianity conjecture:
%% \begin{theorem} \label{equivalence}
%%   The following are equivalent:
%%   \begin{enumerate}
%%   \item Every NIP integral domain $R$ is a henselian local ring.
%%   \item If $(K,+,\cdot,\ldots)$ is an NIP expansion of a field, and
%%     $\tau$ is a definable field topology on $K$, then $\tau$ is
%%     gt-henselian.
%%   \item Every NIP local ring $R$ is henselian.
%%   \end{enumerate}
%% \end{theorem}
%% Condition (2) is a topological reformulation of the generalized
%% henselianity conjecture.  Condition (3) is again purely algebraic, but
%% interestingly, the proof of the equivalence goes through Condition
%% (2).  We prove Theorem~\ref{equivalence} in
%% Section~\ref{sec-reformulate}.

Pop~\cite{Pop-little} says that a field $K$ is \emph{large} if for any
smooth algebraic curve $C$ over $K$,
\begin{equation*}
  |C(K)| > 0 \implies |C(K)| = \infty.
\end{equation*}
Conjecturally, any stable or NIP field should be large or finite.
Specifically, the stable fields conjecture implies that stable
fields are large or finite, while the Shelah conjecture on NIP fields
\cite[Conjecture~1.1]{NIP-char} implies the same for NIP fields.

Any field admitting a gt-henselian topology is large, essentially by
\cite{Pophenselian}; see Fact~\ref{MCF}(3).  Thus
Theorem~\ref{gt-hens-thm} has the following corollary:
\begin{corollary}[{$\subseteq$ Proposition~\ref{gt-hens-prop}}] \label{large-cor}
  Let $K$ be an NIP expansion of a field and let $\tau$ be a definable
  field topology on $K$.  Suppose one of the following holds:
  \begin{enumerate}
  \item $\characteristic(K) = p$.
  \item $K$ has finite dp-rank.
%%   \item Conjecture~\ref{ghc} holds.
  \end{enumerate}
  Then $K$ is large.
\end{corollary}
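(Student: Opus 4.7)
The plan is to chain together the two inputs that have already been set up in the surrounding text, so the proof reduces to a very short deduction. Under either of the hypotheses (1) or (2), Theorem~\ref{gt-hens-thm} is directly applicable, giving that the definable field topology $\tau$ is gt-henselian. At that point the conclusion is about the bare field $K$ (largeness is a purely algebraic property of $K$, not of the topology or the expansion), so I just need to extract ``large'' from the existence of a gt-henselian field topology on $K$.

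The second and final step is to feed this into Fact~\ref{MCF}(3), which (via Pop's theorem from \cite{Pophenselian} that $t$-henselian fields are large, generalized to the gt-henselian setting by Dittmann--Walsberg--Ye) asserts precisely that any field admitting a gt-henselian topology is large. Applying it to $(K,\tau)$ gives that $K$ is large, as required.

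The main obstacle is essentially nonexistent: both ingredients are stated earlier, and the matching of hypotheses is immediate. The only small point to be careful about is that ``largeness'' is a property of $K$ as a field, independent of the expansion, so I should remark that although Theorem~\ref{gt-hens-thm} concerns the expanded structure, the output we actually need (a gt-henselian field topology on the underlying field) is all that Fact~\ref{MCF}(3) consumes, and the NIP hypothesis on the expansion is used only to produce gt-henselianity, not again afterward.
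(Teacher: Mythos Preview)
Your proposal is correct and matches the paper's own argument exactly: apply Theorem~\ref{gt-hens-thm} to obtain gt-henselianity of $\tau$, then invoke Fact~\ref{MCF}(3) to conclude that $K$ is large. This is precisely the deduction the paper indicates just before the corollary and carries out inside Proposition~\ref{gt-hens-prop}.
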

%%   \emph{If} we could prove Conjecture~\ref{ghc}, and \emph{if} we
%%   could construct enough definable field topologies (see
%%   Remark~\ref{conj-tops-exist}), this would imply that any NIP field
%%   is large or finite.  Large stable fields are classified
%%   \cite{firstpaper}, and \emph{if} we could generalize this
%%   classification to NIP fields, then we might hope to classify NIP
%%   fields.

%%   On the other hand, the rational function field $\Cc(t)$ has some
%%   suspicious properties suggesting that it is stable.  This would be a
%%   non-large infinite stable field, contradicting the stable fields
%%   conjecture, and \emph{a fortiori} the Conjecture on NIP fields.  In the
%%   event that these conjectures fail, Corollary~\ref{large-cor} has an
%%   interesting consequence: if $K$ is a non-large NIP field, then $K$
%%   cannot admit a definable topology (or $\characteristic(K) = 0$ and
%%   Conjecture~\ref{ghc} fails).
%% \end{remark}

\subsection{Finite breadth} \label{subsec3}
Recall that a topological field $(K,\tau)$ is \emph{V-topological}
\cite[Section~3]{PZ} if for any subset $B \subseteq K^\times$, $B$ is
bounded if and only if $0$ is not in the closure of $B^{-1}$.  A
theorem of Kowalsky, D\"urbaum, and Fleischer \cite{kow-dur,fleischer}
shows that the V-topologies on $K$ are precisely the field topologies
arising from (Krull) valuations and absolute values (i.e., norms) on
$K$.

A structure $M$ is \emph{dp-minimal} if it has dp-rank 1
\cite{dpExamples}.  Halevi and d'Elb\'ee show that any dp-minimal
integral domain with an infinite residue field is a valuation ring
\cite[Theorem~4.1]{halevi-delbee}.  By combining this with
Theorem~\ref{main-thm} and a little more work, one can prove:
\begin{fact}[{$\subseteq$ Theorem~\ref{wn-thm}}] \label{1-v}
  If $(K,+,\cdot,\ldots)$ is a dp-minimal expansion of a field, and
  $\tau$ is a definable field topology on $K$, then $\tau$ is a
  V-topology.
\end{fact}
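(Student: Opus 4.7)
The plan is to combine Theorem~\ref{main-thm} with the Halevi--d'Elb\'ee classification of dp-minimal integral domains. By passing to a sufficiently saturated elementary extension of $K$ (which preserves both dp-minimality and the definability of $\tau$), apply Theorem~\ref{main-thm} to obtain an externally definable subring $R \subsetneq K$ with $\Frac(R) = K$ and $\tau = \tau_R$. Shelah's theorem on externally definable sets gives that $(K,+,\cdot,\ldots,R)$ is NIP; moreover, the Shelah expansion preserves dp-rank, so this expansion remains dp-minimal. Because $R$ is an infinite definable subset of the expansion, it has dp-rank exactly $1$, and hence the reduct of $R$ to the language of rings is a dp-minimal integral domain.

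Now invoke \cite[Theorem~4.1]{halevi-delbee}: provided the residue field of $R$ is infinite, $R$ is a valuation ring of $K$, so $\tau = \tau_R$ is the associated valuation topology, which is by definition a V-topology.

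The main obstacle I foresee is the case where the residue field of $R$ is finite, which lies outside the hypotheses of \cite[Theorem~4.1]{halevi-delbee}. I would expect to resolve this either by replacing $R$ with a carefully chosen externally definable overring $R' \supseteq R$ satisfying $\tau_{R'} = \tau$ and having infinite residue field (so that Halevi--d'Elb\'ee applies to $R'$), or by invoking a structural result for dp-minimal local rings strong enough to force $R$ itself to be a valuation ring in the finite-residue case (analogous to the situation for $\Zz_p$). I expect most of the remaining technical work to concentrate here.
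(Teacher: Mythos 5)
Your outline matches the paper's approach up to the point where you flag the finite-residue-field issue, but that issue does not actually arise: it is already resolved by a feature of Theorem~\ref{main-thm2} that you overlooked. The full statement of Theorem~\ref{main-thm2} (the precise version of Theorem~\ref{main-thm}) says not only that $\tau = \tau_R$ for some externally definable proper subring $R$ with $\Frac(R) = K$, but also that one can arrange for $R$ to be a $K_0$-algebra for a small elementary substructure $K_0 \preceq K$. Since $K$ carries a field topology it is infinite, so $K_0$ is infinite; therefore for every maximal ideal $\mm \lhd R$ the residue field $R/\mm$ is a nonzero $K_0$-vector space and hence infinite. Thus \cite[Theorem~4.1]{halevi-delbee} applies unconditionally and $R$ is a valuation ring, giving a V-topology. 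No overring trick or separate finite-residue analysis is needed. (You should also cite something for the claim that the Shelah expansion preserves dp-rank; the paper uses \cite[Fact~3.1]{dEHJ}.)

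For what it's worth, the paper's actual proof of the general statement (Theorem~\ref{wn-thm}) does not pass through Halevi--d'Elb\'ee at all: it uses the same ``$R$ is a $K_0$-algebra $\Rightarrow$ all residue fields infinite'' observation (Lemma~\ref{rank}) and then invokes Fact~\ref{from-joh23}, which bounds the breadth of $R$ by its dp-rank for any finite $n$, not just $n=1$. Your route via Halevi--d'Elb\'ee is the one sketched in the introduction and is perfectly fine for the dp-minimal case, but it does not generalize to higher dp-rank, whereas the paper's does.
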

This was already known \cite[Lemma~8.5]{noetherian}, but the proof we
give here is simpler, avoiding the use of the canonical topology from
\cite{dpm1}.

There is a natural generalization of Fact~\ref{1-v} from dp-rank 1 to
dp-rank $n$.  Following \cite[Definitions~3.2, 3.3, 3.9]{prdf5}, say
that a field topology $\tau$ on $K$ has \emph{breadth $\le n$} if
there is a bounded neighborhood $U \ni 0$ such that for any
$a_1,\ldots,a_{n+1} \in K$, there is some $i$ such that
\begin{equation*}
  a_i \in a_1 \cdot U_1 + \cdots + \widehat{a_i \cdot U_i} + \cdots +
  a_n \cdot U_n,
\end{equation*}
where the hat indicates omission.  This definition is quite opaque,
but we review the motivation and basic properties in
Section~\ref{sec-fin-b}.  For now, we mention that $\tau$ has breadth
1 iff $\tau$ is a V-topology, meaning that $\tau$ is induced by a
valuation or absolute value.

Combining Theorem~\ref{main-thm} with results in \cite{noetherian}, we
get the following corollary:
\begin{theorem}[{= Theorem~\ref{wn-thm}}] \label{wn-case}
  Let $K$ be an expansion of a field, with finite dp-rank $n <
  \infty$.  Let $\tau$ be a definable field topology on $K$.  Then
  $\tau$ has breadth $\le n$.
\end{theorem}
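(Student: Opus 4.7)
The plan is to translate the statement into one about NIP integral domains via Theorem~\ref{main-thm}, and then invoke the corresponding ring-side statement from \cite{noetherian}.

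First, I would pass to a sufficiently saturated elementary extension $K^* \succ K$. This is harmless, since ``$\tau$ has breadth $\le n$'' is an existential-universal first-order property of the topological structure (``there exists a bounded neighborhood $U$ of $0$ such that for all $a_1, \ldots, a_{n+1}$, some $a_i$ lies in $\sum_{j \ne i} a_j \cdot U$''), and therefore descends from $K^*$ to $K$ by elementarity; also $\dpr(K^*) = n$. Applying Theorem~\ref{main-thm} inside $K^*$ yields an externally definable subring $R \subseteq K^*$ with $\tau = \tau_R$. By Shelah's theorem on externally definable sets in NIP structures \cite[Corollary~3.24]{NIPguide}, the pair $(K^*, R)$ is NIP, and since Shelah expansions do not increase dp-rank, this expansion still has dp-rank at most $n$. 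In particular, $R$ is an NIP integral domain of dp-rank $\le n$.

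At this point I would cite the dp-rank $n$ generalization of Halevi--d'Elb\'ee from \cite{noetherian}, which extends Fact~\ref{1-v}: for an NIP integral domain $R$ of dp-rank $\le n$ with $R \ne \Frac(R)$ (which holds in our situation because $\tau$ is non-discrete), the $R$-adic topology on $\Frac(R)$ has breadth $\le n$. Applied to our $R$, this gives breadth $\le n$ for $\tau = \tau_R$ on $\Frac(R) = K^*$, which transfers down to $K$ via the reduction above.

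The main obstacle is the ring-theoretic input from \cite{noetherian}: one expects its proof to decompose $R$ into at most $n$ ``independent'' valuation-like factors, mirroring the dp-minimal case in which a single valuation ring is produced, so that the corresponding $R$-adic topology is a $W_n$-topology of breadth $\le n$ essentially by construction. Secondary subtleties --- that dp-rank is preserved under Shelah expansions, and that breadth $\le n$ descends along the elementary embedding $K \prec K^*$ --- are routine once the statement is written in its natural first-order form.
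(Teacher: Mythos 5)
Your overall strategy matches the paper's: pass to a saturated extension, apply Theorem~\ref{main-thm} to realize $\tau$ as $\tau_R$ for an externally definable $R$, control the dp-rank of $(K^*,R)$, and transfer a ring-side breadth bound from \cite{noetherian} back to the topology via Fact~\ref{some-facts}(2). However, there is a genuine gap in the key ring-theoretic citation. The result you invoke --- ``for an NIP integral domain $R$ of dp-rank $\le n$ with $R \ne \Frac(R)$, one has $\br(R) \le n$'' --- is not what \cite{noetherian} proves. The actual statement (Fact~\ref{from-joh23}, i.e.\ \cite[Lemma~5.6]{noetherian}) requires the additional hypothesis that $R/\mm$ is \emph{infinite} for every maximal ideal $\mm$; the dp-minimal case of Halevi--d'Elb\'ee carries the same infinite-residue-field hypothesis. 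Without it, the bound $\br(R) \le \dpr(R)$ is not available. Your argument gives no reason why $R$ should have infinite residue fields, so the last step does not go through as written.

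This is precisely why Theorem~\ref{main-thm2} contains the extra clause ``we can also arrange for $R$ to be a $K_0$-algebra for some small elementary substructure $K_0 \preceq K$.'' The paper's proof uses that clause: since $K$ admits a field topology it is infinite, so $K_0$ is infinite, so $R/\mm$ is a nonzero $K_0$-vector space and hence infinite for every maximal ideal $\mm$ (this is Lemma~\ref{rank}). With that in hand, Fact~\ref{from-joh23} applies and gives $\br(R) \le \dpr(R) \le n$. To repair your proof, invoke the $K_0$-algebra part of Theorem~\ref{main-thm} explicitly and include the residue-field argument. A smaller remark: your justification that ``breadth $\le n$'' transfers between $K$ and $K^*$ by being ``existential-universal'' is in the right spirit but should be phrased via the Prestel--Ziegler notion of \emph{local sentences} and \emph{local equivalence} (Fact~\ref{some-facts}(1)), since the topology is not literally part of the first-order structure.
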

When $n = 1$, this recovers Fact~\ref{1-v}, the fact that definable
field topologies on dp-minimal fields are V-topological.  When $n >
1$, there are examples of definable topologies $\tau$ that are not
V-topological (Example~\ref{non-V-example}), at least when
$\characteristic(K) = 0$.

The situation in positive characteristic is very different.  In his
master's thesis \cite{yy}, Yang Yang uses Theorem~\ref{wn-case} to
prove the following surprising result:
\begin{fact}[Yang]
  If $\characteristic(K) = p > 0$ and $K$ has finite dp-rank, then any
  definable field topology $\tau$ on $K$ is a V-topology.
\end{fact}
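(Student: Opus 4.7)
The plan is to combine Theorem~\ref{main-thm} with Theorem~\ref{wn-case} and then exploit the extra rigidity of positive characteristic NIP fields to force ``breadth $\le n$'' down to ``breadth $= 1$,'' i.e., a V-topology.

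First, I would apply Theorem~\ref{main-thm}: after passing to a sufficiently saturated elementary extension (which preserves $\characteristic K = p$ and $\dpr(K) \le n$), write $\tau = \tau_R$ for an externally definable subring $R \subseteq K$ with $\Frac(R) = K$. The expansion $(K, R, \ldots)$ is NIP by Shelah's theorem, $R$ is an $\Ff_p$-algebra, and by Fact~\ref{fact1.7}(1) it is a henselian local ring. Theorem~\ref{wn-case} then gives that $\tau_R$ has breadth $\le n$, so by the structure theory of $W_n$-topologies from \cite{prdf5} I may write $\tau$ as the supremum of at most $n$ V-topologies $\tau_{V_1}, \ldots, \tau_{V_k}$ arising from pairwise independent nontrivial valuations $V_i$ on $K$. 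Since each $V_i$ is recoverable from $\tau_R$, each is externally definable, and so each $(K, V_i)$ is an NIP henselian valued field of characteristic $p$.

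The remaining task, and main obstacle, is to prove $k = 1$. If $k \ge 2$, the approximation theorem says $V_1 \cap \cdots \cap V_k$ is a Pr\"ufer-type semilocal domain with $k$ distinct maximal ideals, hence not local. My intended argument is to show that the topological identity $\tau_R = \tau_{V_1} \vee \cdots \vee \tau_{V_k}$ together with the henselian-local structure of $R$ forces $R$'s ideal lattice to align closely enough with that of $V_1 \cap \cdots \cap V_k$ to produce a contradiction; then $k = 1$ and $\tau = \tau_{V_1}$ is a V-topology. The principal difficulty is translating the purely topological coincidence $\tau_R = \sup_i \tau_{V_i}$ into algebraic information about $R$ that conflicts with locality; I expect the needed ingredients to be Kaplan--Scanlon--Wagner Artin--Schreier closedness for NIP fields of characteristic $p$ (which strongly constrains the residue fields of the $V_i$) together with the rigidity of henselian valuations on such fields, and careful bookkeeping of the externally definable framework to keep everything NIP throughout the reduction.
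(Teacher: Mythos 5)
This Fact is not proved in the paper; it is cited from Yang's master's thesis \cite{yy}, so there is no in-paper argument to compare your attempt against. Evaluating the attempt on its own terms, there are two genuine gaps, one of which is fatal as written.

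The fatal gap is the claim that the structure theory of $W_n$-topologies lets you decompose $\tau$ as the supremum $\tau_{V_1} \vee \cdots \vee \tau_{V_k}$ of at most $n$ V-topologies coming from pairwise independent valuations $V_i$. That is simply not what \cite{prdf5} says, and the paper's own Example~\ref{non-V-example} is a counterexample: the $W_2$-domain $R = \{x : v(x) \ge 0 \text{ and } v(\partial x) \ge 0\}$ inside a model of the model companion of differential $\ACVF_{0,0}$ is a $W_2$-domain whose $R$-adic topology $\tau_{\partial v}$ has breadth exactly $2$, and $R$ is not an intersection of valuation rings (the constraint $v(\partial x) \ge 0$ involves a derivation, not a ring homomorphism). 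So $W_n$-domains and $W_n$-topologies are strictly more general than what your decomposition would produce, and the remainder of your argument, which reasons about semilocal intersections of valuation rings and approximation theorems, never gets off the ground. (That the example is in characteristic $0$ doesn't save you: the decomposition you're invoking is a general structural assertion, and you use it before you ever invoke positive characteristic.)

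Worth noting also: you correctly observe that $R$ is an $\Ff_p$-algebra, hence henselian local by Fact~\ref{fact1.7}(1), but you do not exploit the strongest consequence, namely that $\tau = \tau_R$ is then gt-henselian (Fact~\ref{MCF}(2)). Had your decomposition into $k \ge 2$ pairwise independent V-topologies been correct, Lemma~\ref{plus-lem} would immediately yield a contradiction with gt-henselianity, making your entire step 6 unnecessary. This shows both that your step 6 is overcomplicated and that the real difficulty is precisely the decomposition step you assert without justification. Finally, you flag your step 6 as incomplete (``I expect the needed ingredients to be \ldots''), so even setting aside the false decomposition the attempt is not a proof.
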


\subsection{Applications to the generalized henselianity conjecture} \label{subsec4}
Recall the following two essential conjectures on NIP fields:
\begin{conjecture}[Shelah conjecture] \label{shelconj}
  If $K$ is an NIP field, then $K$ is finite or separably closed or
  real closed or $K$ admits a definable henselian valuation.
\end{conjecture}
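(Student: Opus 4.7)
The statement is the full Shelah conjecture on NIP fields, one of the central open problems in the area, so the most that can honestly be offered is a strategic outline that reduces the conjecture to a small number of well-known subproblems, exploiting the machinery of this paper. The plan is to (i) produce a nontrivial definable ring topology $\tau$ on $K$, (ii) extract an externally definable NIP integral domain via Theorem~\ref{main-thm}, (iii) promote it to an externally definable henselian valuation using Fact~\ref{fact1.7} or its conjectural extensions, and (iv) definabilize that valuation.

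After reducing to the case where $K$ is infinite and neither separably closed nor real closed, I would first try to show that $K$ admits a nontrivial definable ring topology $\tau$; this is already open in general but is the natural first target, and is implied by variants of the henselianity/topology conjectures for NIP fields. Granting $\tau$, Corollary~\ref{cor1.5} shows it is automatically a field topology, and after passing to a sufficiently saturated elementary extension Theorem~\ref{main-thm} supplies an externally definable subring $R \subsetneq K$ with $\tau = \tau_R$. Shelah's theorem on externally definable sets makes $(K,\ldots,R)$ an NIP structure, so $R$ is an NIP integral domain. Next I would split by characteristic: in positive characteristic Fact~\ref{fact1.7}(1) gives that $R$ is a henselian local ring, and in finite dp-rank the same conclusion follows from Fact~\ref{fact1.7}(2); in characteristic zero with infinite dp-rank one would invoke the generalized henselianity conjecture for NIP local rings discussed in Subsection~\ref{subsec4}. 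The maximal ideal of the resulting henselian local $R$, or a suitable coarsening thereof, then yields a nontrivial externally definable henselian valuation on $K$.

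The hard part, and where the substantive content of the conjecture lives, is stepping from \emph{externally definable} henselian valuation to \emph{definable} henselian valuation as the statement demands. This is the content of the Jahnke--Koenigsmann definability programme, and requires analysing the canonical henselian valuation of $K$ together with the structure of $\Gal(K^{\mathrm{sep}}/K)$ in the NIP setting; in full generality it is open. Combined with the already-open problem of producing the initial definable topology, this is where any honest plan collides with the deep content of the Shelah conjecture. The contribution of this paper is to make the two middle steps essentially automatic, so that Conjecture~\ref{shelconj} becomes equivalent, modulo the henselianity conjecture for NIP local rings, to the conjunction of ``every non-trivial NIP expansion of a field admits a definable ring topology'' and ``externally definable henselian valuations on NIP fields descend to definable ones.''
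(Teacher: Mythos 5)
The statement you were asked to prove is labelled a \emph{Conjecture} in the paper and is given no proof there; it is one of the central open problems in the model theory of NIP fields, and the paper treats it strictly as a hypothesis (e.g.\ in Section~\ref{thoughts}, where the question is whether the Shelah conjecture \emph{implies} the generalized henselianity conjecture). You correctly recognize that no honest proof can be offered, and your answer is appropriately framed as a strategy sketch rather than a proof, so there is no ``gap'' in the usual sense.

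That said, the sketch inverts the logical direction in which the paper and the surrounding literature actually travel, and this is worth flagging. Halevi--Hasson--Jahnke prove Shelah $\Rightarrow$ henselianity, and Section~\ref{thoughts} of this paper aims at Shelah $\Rightarrow$ GHC; your outline instead tries to \emph{derive} the Shelah conjecture from henselianity-type inputs. The first step of your plan, ``produce a nontrivial definable ring topology $\tau$ on $K$,'' is already essentially the hard content of the conjecture: for a pure NIP field with no a priori definable valuation, there is no known mechanism for conjuring a definable topology, and once you have one, the remaining work (externally definable henselian valuation $\to$ definable henselian valuation, via Jahnke--Koenigsmann) is comparatively well understood. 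So the plan does not genuinely reduce the problem; it repackages it. There is also a technical slip in the final step: Theorem~\ref{main-thm} produces an externally definable integral domain $R$ which, under GHC, is a henselian \emph{local} ring, but a henselian local ring is not a valuation ring, and its maximal ideal does not directly give a valuation. One would have to pass through the gt-henselian topology, use Fact~\ref{MCF}(1) to get t-henselianity in the V-topological case, and only then extract a valuation --- and establishing V-topological is itself nontrivial outside finite dp-rank (cf.\ Theorem~\ref{wn-thm} and Example~\ref{non-V-example}).

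In short: the paper states the conjecture without proof, you correctly treat it as open, but the outline overstates how much of the conjecture the paper's machinery actually discharges and quietly reverses the implication the paper is built around.
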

\begin{conjecture}[Henselianity conjecture] \label{hensconj}
  If $(K,v)$ is an NIP valued field, then $v$ is henselian.
\end{conjecture}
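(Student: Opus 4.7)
The plan is to leverage the translation machinery of this paper, especially Theorem~\ref{main-thm}, to reduce Conjecture~\ref{hensconj} to a statement about NIP integral domains, and then to invoke or extend known henselianity results in that setting. Let $(K,v)$ be an NIP valued field. The valuation ring $\Oo_v$ is definable from $v$, so it is itself an NIP integral domain, and asking whether $v$ is henselian is equivalent to asking whether the local ring $\Oo_v$ is henselian. By Fact~\ref{fact1.7}, this already holds in two regimes: if $\characteristic(K) = p$, then $\Oo_v$ is an $\Ff_p$-algebra and is henselian; and if $K$ has finite dp-rank, then $\Oo_v$ is henselian. So the open content of Conjecture~\ref{hensconj} is concentrated in residue characteristic $0$ with infinite dp-rank, and everything below is aimed at that case.

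For the remaining case, I would proceed as follows. First, pass to a sufficiently saturated model and consider the valuation topology $\tau_v$, which is a definable ring topology on $K$. Apply Theorem~\ref{main-thm} to obtain an externally definable subring $R \subseteq K$ with $\tau_v = \tau_R$, so that $R$ is an NIP integral domain and the pair $(K,R)$ is NIP. Next, analyze the relationship between $R$ and $\Oo_v$: both induce the same topology $\tau_v$ and both are bounded subrings with respect to it, so one expects to recover $\Oo_v$ from $R$ by localization or by intersection with externally definable data, reducing henselianity of $\Oo_v$ to henselianity of $R$. Finally, one would want to extend Fact~\ref{fact1.7} from the $\Ff_p$-algebra and finite-dp-rank cases to arbitrary NIP integral domains, which is precisely the conjecture on NIP local rings flagged in the abstract and in Subsection~\ref{subsec4}.

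The hard part is the last step, and here I do not expect progress without genuinely new ideas. Proving henselianity of NIP integral domains in residue characteristic $0$ and infinite dp-rank is essentially equivalent to the generalized henselianity conjecture for NIP local rings, and is itself closely tied to Shelah's conjecture (Conjecture~\ref{shelconj}) for NIP fields in characteristic $0$; the absence of a dimension-theoretic tool like dp-rank removes the main lever used in \cite{noetherian}, and mixed characteristic blocks the Frobenius-based methods of \cite{fpcase}. The realistic goal is therefore not a direct proof of Conjecture~\ref{hensconj} but a clean reformulation showing it is equivalent to the broader NIP-local-rings conjecture, which is exactly the program the abstract promises to carry out with the machinery developed above.
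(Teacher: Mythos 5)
This is a conjecture, not a theorem: the paper states Conjecture~\ref{hensconj} without proof, so there is no argument in the paper against which to compare. You correctly recognize this yourself at the end, and you correctly locate the known cases (positive characteristic, finite dp-rank via Fact~\ref{fact1.7}) and the open regime (residue characteristic $0$, infinite dp-rank). To that extent the proposal is sound as a survey of the state of the art, and it aligns with the paper's own framing in Subsection~\ref{subsec4}.

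However, one intermediate step you sketch would not go through as stated, and it is worth flagging. You suggest that from $\tau_v = \tau_R$ with $R$ externally definable and NIP, one could ``recover $\Oo_v$ from $R$'' and thereby reduce henselianity of $\Oo_v$ to henselianity of $R$. This is not automatic. Two subrings of $K$ can induce the same locally bounded topology without both being henselian; concretely, one may have $\Oo_v \subseteq R \subseteq c\Oo_v$, and even if $R$ is a henselian local ring (as the GHC would give), this does not by itself force $\Oo_v$ to be henselian. This is exactly the gap between \emph{t-henselianity of the topology} $\tau_v$ (a local-equivalence invariant) and \emph{henselianity of the valuation} $v$ (a property of the specific ring $\Oo_v$); the existence of non-henselian valued fields which are t-henselian shows the two notions genuinely diverge, and closing that gap is the nontrivial content of the Halevi--Hasson--Jahnke argument cited via \cite[Proposition~6.3]{hhj-v-top}. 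Also note the logical direction: your final step appeals to the GHC for arbitrary NIP integral domains, which is \emph{strictly stronger} than Conjecture~\ref{hensconj}, so the plan as written is circular rather than a reduction. The paper's own contribution here is not to prove the henselianity conjecture, but to give equivalent reformulations of the GHC (Theorems~\ref{topreform} and \ref{local-form}) and a conditional path from the Shelah conjecture to the GHC (Theorem~\ref{roundabout}), contingent on Conjectures~\ref{un1}--\ref{un2}.
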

The Shelah conjecture (also known as the ``Conjecture on NIP fields'')
implies a complete classification of NIP fields
\cite[Theorem~7.1]{NIP-char}, but is only known in the case
of finite dp-rank \cite[Theorem~1.2]{prdf6}.  The henselianity
conjecture is known in the case of finite dp-rank
\cite[Theorem~1.1]{prdf6} as well as the case of positive
characteristic \cite[Theorem~2.8]{prdf1a}.  Halevi, Hasson, and Jahnke
have shown that the Shelah conjecture implies the henselianity
conjecture \cite[Proposition~6.3]{hhj-v-top}, though in practice the
expectation is that the henselianity conjecture would be an ingredient
in the proof of the Shelah conjecture (see the discussion following
\cite[Conjecture~2.2]{halevi-hasson-jahnke}).

In \cite[Conjecture~1.2]{noetherian}, I proposed the following
generalization of the henselianity conjecture:
\begin{conjecture}[Generalized henselianity conjecture] \label{ghcv2}
  The following (equivalent) statements hold:
  \begin{enumerate}
  \item \label{old1} If $R$ is an NIP integral domain, then $R$ is a henselian
    local ring.
  \item \label{old2} If $R$ is an NIP integral domain, then $R$ is a local ring.
  \item \label{old3} If $R$ is an NIP ring, then $R$ is a direct product of
    finitely many henselian local rings.
  \end{enumerate}
\end{conjecture}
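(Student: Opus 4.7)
\emph{Proof proposal.}

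Since the three statements generalize the open Shelah (Conjecture~\ref{shelconj}) and henselianity (Conjecture~\ref{hensconj}) conjectures, the full conjecture is out of reach by current methods; the realistically tractable task is the parenthetical equivalence (1) $\Leftrightarrow$ (2) $\Leftrightarrow$ (3), where I will focus the plan.

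The easy directions are (1) $\Rightarrow$ (2) (a henselian local ring is in particular local) and (3) $\Rightarrow$ (1) (an integral domain has no nontrivial idempotents, so the product in (3) collapses to a single henselian local factor). For (1) $\Rightarrow$ (3), I would first show that no NIP commutative ring has an infinite family of pairwise orthogonal nonzero idempotents: given orthogonal $e_1, e_2, \ldots$, the formula $xy = x$ has IP, witnessed by the $e_i$ and the finite sums $b_S := \sum_{i \in S} e_i$. A finite maximal orthogonal family of primitive idempotents then splits $R$ as $R_1 \times \cdots \times R_k$ with each $R_i$ NIP and having no nontrivial idempotents (``connected''). The obstacle is that connectedness does not force being a domain (e.g.\ $\Zz/4$), so one cannot invoke (1) directly on $R_i$; I would try to argue that a connected NIP ring is a domain by applying (2) to the quotients $R_i/\pp$ for the finitely many minimal primes $\pp$ and deriving a contradiction from the existence of two incomparable minimal primes in a connected reduced NIP ring --- this last contradiction is the main obstacle in this direction.

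For (2) $\Rightarrow$ (1), the plan rests on the standard criterion: a local integral domain $R$ with maximal ideal $\mm$ is henselian iff every monic $f \in R[x]$ with a simple residue root $\bar\alpha \in R/\mm$ has a lift in $R$. Take such an $f$ of minimal degree greater than one and set $A := R[x]/(f)$. Since $A$ is a free $R$-module of rank $\deg f$, it is interpretable in $R$ and hence NIP. The reduction $A/\mm A = (R/\mm)[x]/(\bar f)$ is not local, because $\bar f$ has a simple root but is not a prime power; hence $A$ is not local, and the contrapositive of (2) forces $A$ not to be an integral domain, so $f$ is reducible over $K := \Frac(R)$. The main obstacle is the integral-closure step: the factorization $f = g \cdot h$ over $K$ need not lie in $R[x]$ unless $R$ is integrally closed in $K$. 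I would try to pass to the integral closure $\widetilde R$ of $R$ inside $K$, presenting $\widetilde R$ as an externally definable (hence NIP) subring of the NIP field $K$ in the style of the discussion following Theorem~\ref{main-thm} (Shelah's expansion theorem), running the argument inside $\widetilde R$, and then descending the obtained root.

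As for the substantive content of the conjecture, the only plausible route at present is to combine the topological machinery of this paper (Theorems~\ref{main-thm}, \ref{loc-bdd-thm}, \ref{gt-hens-thm}, \ref{wn-case}) with the partial results of \cite{fpcase, noetherian, prdf6}, pushing beyond the known finite dp-rank and positive characteristic cases.
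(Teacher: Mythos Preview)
Your plan for $(2)\Rightarrow(1)$ has a real gap at the integral-closure step, and your proposed fix does not work. There is no reason for the integral closure $\widetilde R$ of an NIP local domain $R$ in $K=\Frac(R)$ to be externally definable: Shelah's theorem says that \emph{adding} an externally definable predicate preserves NIP, not that an arbitrary $\vee$-definable subring is externally definable. The sets $D_n=\{x\in K:\exists\,a_0,\ldots,a_{n-1}\in R,\ x^n+a_{n-1}x^{n-1}+\cdots+a_0=0\}$ form an increasing chain with union $\widetilde R$, but they are not \emph{uniformly} definable (the defining formulas change with $n$), so the directed-union trick used after Theorem~\ref{main-thm2} does not apply. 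Even granting that $\widetilde R$ is NIP and henselian, your ``descent'' step fails too: a root $\alpha\in\widetilde R\subseteq K$ of the monic $f$ is integral over $R$ by definition, so $\alpha\in R$ would require exactly the integral closedness you are trying to avoid. Your $(1)\Rightarrow(3)$ sketch has a similar unresolved obstacle at the point you flag (connected $\ne$ domain); the proposed reduction via minimal primes is too vague to evaluate and does not obviously yield a contradiction.

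The paper itself does not prove the equivalence; it cites \cite[Proposition~3.6]{noetherian}. The mechanism there, visible in this paper through Fact~\ref{evil} and the surrounding discussion, is quite different from yours: one shows that the class of NIP rings is a \emph{pre-henselizing class} (closed under quotients, localizations, and interpretable finite free algebras), and that the absence of \emph{problematic} rings --- integral domains with exactly two maximal ideals, each of infinite residue field --- upgrades this to a \emph{henselizing class}, in which every integral domain is automatically henselian local \cite[Proposition~3.4]{noetherian}. Condition~(2) immediately rules out problematic rings, giving $(2)\Rightarrow(1)$ with no appeal to integral closure. The same framework handles $(1)\Leftrightarrow(3)$. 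If you want a self-contained argument, the substantive work is inside \cite[Proposition~3.4]{noetherian}, not in the reductions you attempted.
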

The conditions are equivalent by \cite[Proposition~3.6]{noetherian}.
This conjecture---in all three forms---holds when $R$ has finite
dp-rank \cite[Theorem~1.3]{noetherian} or when $R$ is an
$\Ff_p$-algebra \cite[Theorem~3.22]{fpcase}.

Using Theorem~\ref{main-thm} and some additional work, we give a few
more reformulations of the generalized henselianity conjecture:
\begin{theorem}[{= Theorems \ref{topreform} and \ref{local-form}}] \label{shiliu}
  Conditions (\ref{old1})--(\ref{old3}) of Conjecture~\ref{ghcv2} are equivalent to the
  following statements:
  \begin{enumerate}
    \setcounter{enumi}{3}
  \item \label{old4} If $(K,+,\cdot,\ldots)$ is an NIP expansion of a
    field, and $\tau$ is a definable field topology on $K$, then
    $\tau$ is gt-henselian---it satisfies the implicit and inverse
    function theorems for polynomials.
  \item \label{old4.5} If $(K,+,\cdot,\ldots)$ is an NIP expansion of
    a field, and $\tau_1, \tau_2$ are two definable field topologies on $K$,
    then $\tau_1$ and $\tau_2$ are \emph{not} independent.
  \item \label{old5} If $R$ is an NIP local ring, then $R$ is henselian.
  \item \label{old6} If $R$ is an NIP local integral domain, then $R$ is henselian.
  \end{enumerate}
\end{theorem}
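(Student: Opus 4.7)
Since $(1)\Leftrightarrow(2)\Leftrightarrow(3)$ is already \cite[Proposition~3.6]{noetherian}, my task is to fit $(4), (4.5), (5), (6)$ into the same equivalence class. I split the argument into a local-ring half (Theorem~\ref{local-form}, establishing $(1)\Leftrightarrow(5)\Leftrightarrow(6)$) and a topological half (Theorem~\ref{topreform}, establishing $(1)\Leftrightarrow(4)\Leftrightarrow(4.5)$), with Theorem~\ref{main-thm} serving as the bridge between them.

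For the local half, $(3)\Rightarrow(5)$ is immediate: $(3)$ decomposes any NIP ring as a finite product of henselian local rings, and localness of $R$ forces a single factor, making $R$ henselian. $(5)\Rightarrow(6)$ is trivial. To close the loop I plan to obtain $(6)\Rightarrow(1)$ by routing through the topological half: any NIP integral domain $R$ with $R \ne \Frac(R)$ induces a definable field topology $\tau_R$ on the NIP pair $(K,R)$ via Fact~\ref{main-fact}; once the topological reformulation $(4)$ is in place, this gives gt-henselianity of $\tau_R$, from which I will read off henselianity of the local ring $R$.

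For the topological half, $(1)\Rightarrow(4)$ runs as follows: in a sufficiently saturated elementary extension, Theorem~\ref{main-thm} writes $\tau = \tau_R$ for some externally definable subring $R$; by Shelah's theorem on externally definable sets, $(K,R)$ is NIP, hence $R$ is an NIP integral domain; by $(1)$, $R$ is henselian local; and the $R$-adic topology of a henselian local domain should be gt-henselian via Newton iteration in the ideal neighborhood basis. The converse $(4)\Rightarrow(6)$ is the Hensel-from-IFT translation mentioned above. For $(4)\Leftrightarrow(4.5)$: I expect $(4)\Rightarrow(4.5)$ to follow from the principle that two gt-henselian field topologies on a common field are never independent (mirroring the classical dependence of henselian valuations), while $(4.5)\Rightarrow(2)$ uses the observation that distinct maximal ideals $\mm_1 \ne \mm_2$ of an NIP integral domain $R$ give rise to independent $R_{\mm_i}$-adic topologies on $\Frac(R)$, with independence coming from an approximation-theorem argument using the comaximality $\mm_1 + \mm_2 = R$.

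The main obstacle is the two-way correspondence between gt-henselianity of $\tau_R$ and henselianity of $R$. The forward direction is Newton iteration adapted to $\tau_R$, but the converse requires unpacking gt-henselianity---an implicit function theorem about polynomial zero sets---into a lifting statement for a simple residue root of a univariate polynomial $f \in R[x]$ across the maximal ideal of $R$. A secondary difficulty is the independence claim in $(4.5)\Rightarrow(2)$: one must verify that comaximal localizations of an NIP integral domain yield genuinely independent Prestel--Ziegler topologies, and one must also handle the degenerate case $R = \Frac(R)$ uniformly in both halves of the argument.
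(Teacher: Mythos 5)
Your overall decomposition into a topological cycle $(1)\Leftrightarrow(4)\Leftrightarrow(4.5)$ and a local-ring chain $(3)\Rightarrow(5)\Rightarrow(6)$ matches the paper's two theorems, and the steps $(1)\Rightarrow(4)$, $(4)\Rightarrow(4.5)$, $(3)\Rightarrow(5)\Rightarrow(6)$ are correct and essentially as in the paper. But the argument has a genuine hole at the step that closes the local-ring loop.

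Your proposed $(6)\Rightarrow(1)$ is circular: you derive gt-henselianity of $\tau_R$ by invoking $(4)$, but at that point $(4)$ is only known to follow from $(1)$, which is the thing you are trying to prove. (The variant $(4)\Rightarrow(6)$ you sketch --- ``read off henselianity of $R$ from gt-henselianity of $\tau_R$'' --- is also not something the paper proves and is not obvious: gt-henselianity is a property of the topology up to local equivalence, while henselianity of $R$ is a property of the specific ring, and the paper only establishes the implication henselian $\Rightarrow$ gt-henselian, via Fact~\ref{MCF}(2).) The paper's actual proof of $(\ref{old6})\Rightarrow(\ref{old1})$ (Theorem~\ref{local-form}) is by contrapositive and rests on a substantial new ingredient you have no analogue of: Proposition~\ref{chaotic}, which says that a field with two independent \emph{definable, gt-henselian} field topologies has IP. This is the place where the real work lives --- Lemma~\ref{nonisol}, the Riemann--Roch argument of Lemma~\ref{RR}, and the approximation-on-curves result Lemma~\ref{approx-curves}. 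Given Proposition~\ref{chaotic}, the argument is: if $(1)$ fails, Proposition~\ref{third-step} produces an NIP expansion of a field with two independent definable topologies $\tau_1,\tau_2$ induced by local domains $A_1,A_2$; by $(6)$ the $A_i$ are henselian, so the $\tau_i$ are gt-henselian by Fact~\ref{MCF}(2); then Proposition~\ref{chaotic} gives IP, contradiction.

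A secondary gap: your sketch of $(4.5)\Rightarrow(2)$ says that two maximal ideals of an NIP domain $R$ give independent $R_{\mathfrak m_i}$-adic topologies ``from comaximality.'' Comaximality alone is not enough. The paper's Lemma~\ref{independence-trick} needs the stronger divisibility condition $(A\setminus\mathfrak p)\cdot(A\setminus\mathfrak q)=A\setminus\{0\}$, and Proposition~\ref{third-step} has to pass to a quotient $A'=A/\mathfrak r$ (for a carefully chosen prime $\mathfrak r$, obtained from a multiplicative set via the prime-avoidance argument) to force this condition before independence can be concluded. You will need that quotient construction; the naive localization at two maximal ideals of the original $R$ may give non-independent topologies.
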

Conditions (\ref{old4}) and (\ref{old4.5}) give topological
reformulations of the generalized henselianity conjecture, and
Condition (\ref{old4}) in particular seems conceptually very nice.
Condition (\ref{old4.5}) is notable for suggesting an approach to
prove the generalized henselianity conjecture \emph{assuming} the
Shelah conjecture, which is discussed in Section~\ref{thoughts} below.

Conditions (\ref{old5}) and (\ref{old6}) are purely algebraic, but the
proof of their equivalence to (\ref{old1})--(\ref{old3}) uses
topologies.  If we phrase the (original) henselianity conjecture as
``NIP valuation rings are henselian'', then Condition (\ref{old5}) is
arguably a better generalization of the henselianity conjecture than
Condition (\ref{old1}), so it is nice to know that the two statements
are equivalent.

%% Note that the implication (\ref{old1})$\implies$(\ref{old4})
%% is just like Theorem~\ref{gt-hens-thm}.

\section{From rings to field topologies} \label{frtft}
In this section, we review some background material, almost all of
which is from \cite{PZ}.

Let $K$ be a field.  A \emph{ring topology} on $K$ is a non-discrete
non-trivial (i.e., non-indiscrete) topology on $K$ such that the ring
operations $+,-,\cdot$ are continuous.  Any ring topology is
Hausdorff.  If $K$ admits a ring topology, then $K$ must be infinite.
A \emph{field topology} is a ring topology such that division is
continuous.  A ring topology is determined by the filter of
neighborhoods of 0, which determines every other neighborhood filter
by translation.

Fix a field $K$ with a ring topology $\tau$.  A set $B \subseteq K$ is
\emph{bounded} if for any neighborhood $U \ni 0$, there is some $c \in
K^\times$ with $cB \subseteq U$.
\begin{fact}[{\cite[Lemma~2.1]{PZ}}]
  ~ \label{pz-bdd}
  \begin{enumerate}
  \item $K$ is unbounded.  
  \item Finite sets are bounded.
  \item Subsets of bounded sets are bounded.
  \item If $B_1, B_2$ are bounded, then the sum set $B_1 + B_2$, the
    product set $B_1 \cdot B_2$, and the union $B_1 \cup B_2$ are
    bounded.
  \item If $U \ni 0$ is a bounded neighborhood of 0, then $\{cU : c
    \in K^\times\}$ is a neighborhood basis of 0.
  \item If $U \ni 0$ is a bounded neighborhood of 0, then $\{cU : c
    \in K^\times\}$ is a basis for the ideal of bounded sets.  That
    is, $B \subseteq K$ is bounded if and only if $B \subseteq cU$ for
    some $c \in K^\times$.
  \end{enumerate}
\end{fact}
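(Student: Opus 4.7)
The plan is to handle (1)--(3), (5), (6) by short, essentially immediate arguments from the ring-topology axioms, and to concentrate the real work on (4).

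For (1), if $K$ were bounded then every neighborhood $U \ni 0$ would contain $cK = K$ for some $c \in K^\times$, forcing $U = K$; translation then makes every open set $\emptyset$ or $K$, contradicting non-triviality of $\tau$. For (2), I would prove all finite $B = \{a_1,\ldots,a_n\}$ in one shot: continuity of multiplication at each $(0, a_i)$ supplies a neighborhood $V_i \ni 0$ with $V_i a_i \subseteq U$, and any nonzero $c \in \bigcap_i V_i$---which exists because $\tau$ is non-discrete, so no open set around $0$ equals $\{0\}$---satisfies $cB \subseteq U$. Part (3) is immediate. For (5), boundedness of $U$ literally says each neighborhood of $0$ contains some $cU$, and each $cU$ is itself an open neighborhood of $0$ since multiplication by $c \in K^\times$ is a homeomorphism of $K$. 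For (6), if $B$ is bounded pick $d$ with $dB \subseteq U$, so $B \subseteq d^{-1}U$; conversely one checks directly that scalar multiples of the bounded neighborhood $U$ are bounded, so any $B \subseteq cU$ inherits boundedness via (3).

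The content is in (4). Naively choosing $c_i \ne 0$ with $c_i B_i \subseteq U$ does not directly yield a single scalar $c$ witnessing $c(B_1 \cup B_2) \subseteq U$, because there is no compactness, absolute value, or group structure on $K^\times$ to coordinate the two scalars---and this coordination is the main obstacle. The plan is to set up a nested system of neighborhoods $V_0 = U \supseteq V_1 \supseteq V_2 \supseteq V_3$ with $V_{k+1}\cdot V_{k+1} \subseteq V_k$ (and, for the sum case, $V_1 + V_1 \subseteq V_0$ as well), obtained by iterating continuity of multiplication and addition at the origin. Using boundedness of each $B_i$, I pick $c_i^0 \ne 0$ with $c_i^0 B_i \subseteq V_3$. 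The key shrinking step: continuity of multiplication at $(c_i^0, 0)$ gives a neighborhood $W_i \ni 0$ with $c_i^0 W_i \subseteq V_3$; I then choose a nonzero $d_i \in W_i \cap V_3$ (using non-discreteness) and set $c_i = c_i^0 d_i$, which satisfies both $c_i \in V_3$ and $c_i B_i = d_i(c_i^0 B_i) \subseteq V_3 \cdot V_3 \subseteq V_2$.

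With the coordinated scalars in hand, let $c = c_1 c_2 \in V_3 \cdot V_3 \subseteq V_2$. Then $cB_1 = c_2(c_1 B_1) \subseteq V_3 \cdot V_2 \subseteq V_2 \cdot V_2 \subseteq V_1 \subseteq U$, and symmetrically $cB_2 \subseteq U$, settling the union case. The sum case follows because $cB_1 + cB_2 \subseteq V_1 + V_1 \subseteq V_0 = U$. The product case follows because $(cB_1)(cB_2) \subseteq V_1 \cdot V_1 \subseteq V_0 = U$, so $c^2 \ne 0$ witnesses boundedness of $B_1 B_2$. The layered-neighborhood device is the standard way to overcome the absence of compactness and coordinate the two scalars; every other part of the statement reduces either to this coordination step or to the homeomorphism property of scalar multiplication.
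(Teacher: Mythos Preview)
Your argument is correct in every part, including the layered-neighborhood coordination in (4). Note, however, that the paper does not supply its own proof of this statement: it is recorded as a \emph{Fact} and simply cited from Prestel--Ziegler \cite[Lemma~2.1]{PZ}, so there is no in-paper argument to compare against. Your treatment is essentially the standard one and would serve as a self-contained replacement for the citation.
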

A ring topology $\tau$ is \emph{locally bounded} if some neighborhood
of 0 is bounded, in which case statements (5) and (6) take effect.
\begin{fact}[{\cite[Example~1.2]{PZ}}] \label{r-adic}
  Let $R$ be an integral domain with fraction field $\Frac(R) = K \ne
  R$.  Then there is a locally bounded ring topology $\tau_R$ on $K$
  such that
  \begin{enumerate}
  \item The set of non-zero ideals of $R$ is a neighborhood basis of
    0.
  \item The set $\{cR : c \in K^\times\}$ is a neighborhood basis of
    0.
  \item $R$ is a bounded neighborhood of 0.
  \end{enumerate}
\end{fact}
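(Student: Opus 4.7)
The plan is to define $\tau_R$ directly by declaring the collection $\mathcal{B}$ of non-zero ideals of $R$ to be a neighborhood basis of $0$, and then to check that the resulting topology is a ring topology with the three listed properties. First I would verify that $\mathcal{B}$ is a filter base: given $I, J \in \mathcal{B}$, the product $IJ$ is a non-zero ideal (using that $R$ is a domain) contained in $I \cap J$. Translating $\mathcal{B}$ then produces a neighborhood filter at every point, giving a well-defined topology.

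Next I would verify the axioms for a topological ring. Continuity of addition and negation at $0$ is immediate from $I + I = I$ and $-I = I$ for any ideal $I$. Continuity of multiplication at $(0,0)$ follows from $I \cdot I \subseteq I$. The nontrivial axiom is continuity of multiplication at pairs of the form $(a, 0)$ for an arbitrary $a \in K$: given $I \in \mathcal{B}$, write $a = p/q$ with $p, q \in R \setminus \{0\}$, and observe that $qI \in \mathcal{B}$ satisfies $a \cdot (qI) = pI \subseteq I$. This is the single step where the fraction-field hypothesis really enters, and it is the main (small) obstacle—everything else is essentially formal. For non-discreteness, every non-zero ideal contains a non-zero element.

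For Hausdorffness, it suffices to show $\bigcap \mathcal{B} = \{0\}$, and I would do this through the auxiliary basis $\{cR : c \in K^\times\}$ (justified below). Given $x \in K \setminus \{0\}$, I choose $c \in K^\times$ with $x \notin cR$: if $x \notin R$ take $c = 1$; otherwise pick $z \in K \setminus R$ (available because $K \neq R$) and set $c = x/z$, so that $x/c = z \notin R$.

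To finish, I would verify that $\{cR : c \in K^\times\}$ is also a neighborhood basis of $0$, and the other two properties will then be immediate. Given $c = p/q \in K^\times$ with $p, q \in R \setminus \{0\}$, the non-zero ideal $pR$ is contained in $cR$ because $p = c \cdot q$, and conversely any non-zero ideal $I$ contains $cR$ for any $c \in I \setminus \{0\}$, so the two bases generate the same filter. Then $R = 1 \cdot R$ is a basis element, hence a neighborhood of $0$, and $R$ is bounded by definition: for any neighborhood $U \ni 0$ the basis guarantees some $c \in K^\times$ with $cR \subseteq U$. So $R$ is a bounded neighborhood of $0$, and $\tau_R$ is locally bounded.
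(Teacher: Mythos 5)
Your proof is correct and is essentially the direct verification that the paper says can be done: the paper cites Prestel--Ziegler and records only the equivalence of the two neighborhood bases (which you prove by the same argument), while you spell out the remaining routine checks---that the non-zero ideals form a filter base, that the neighborhood-filter axioms (V3)--(V5) for a ring topology hold (using $I-I=I$, $I\cdot I\subseteq I$, and $a\cdot qI = pI \subseteq I$), non-discreteness, Hausdorffness from $K\ne R$, and boundedness of $R$. No gaps; the only organizational wrinkle is that the Hausdorff argument uses the basis $\{cR\}$, whose equivalence to the ideal basis you establish afterward, but you flag that forward reference, so it is harmless.
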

Fact~\ref{r-adic} is
mentioned in \cite[Example~1.2]{PZ} and the proof of
\cite[Theorem~2.2(a)]{PZ}, and is easy to verify directly.  For
example, the two neighborhood bases are equivalent for the following
reasons:
\begin{itemize}
\item If $I$ is a non-zero ideal in $R$, then $I \supseteq cR$ for any
  $c \in I \setminus \{0\}$.
\item If $a,b \in R \setminus \{0\}$, then $ab^{-1}R$ contains the
  non-zero ideal $aR$.
\end{itemize}
We call $\tau_R$ the \emph{$R$-adic topology}, following \cite{mana}.
\begin{remark} \label{aff-basis}
  The set $\{aR + b : a \in K^\times, ~ b \in K\}$ is a basis of open
  sets for the $R$-adic topology.  Consequently, the $R$-adic topology
  is definable in the pair $(K,R)$.
\end{remark}
Recall that the \emph{Jacobson radical} $J(R)$ of a commutative unital
ring $R$ is the intersection of all maximal ideals of $R$, which can
also be characterized as the largest ideal $J$ such that $1 + J
\subseteq R^\times$ (see \cite[Exercise~7.3]{eisenbud}).  We will need
the following fact:
\begin{fact} \label{j-fact}
  The $R$-adic topology $\tau_R$ is a field topology if and only if
  the Jacobson radical $J(R)$ is non-trivial.
\end{fact}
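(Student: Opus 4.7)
The plan is to translate continuity of inversion into an algebraic condition on ideals of $R$, using the explicit basis of neighborhoods from Fact~\ref{r-adic}. Since $\tau_R$ is already a ring topology, multiplication on $K$ is jointly continuous, so a standard argument reduces continuity of inversion on $K^\times$ to continuity at $1$: given a net $x_\alpha \to a \in K^\times$, one has $a^{-1}x_\alpha \to 1$, hence continuity at $1$ yields $x_\alpha^{-1} a = (a^{-1}x_\alpha)^{-1} \to 1$, whence $x_\alpha^{-1} \to a^{-1}$. By Fact~\ref{r-adic}(1), a neighborhood basis of $1$ in $\tau_R$ is $\{1+I : I \text{ a non-zero ideal of } R\}$, so continuity of inversion at $1$ becomes the assertion that for every non-zero ideal $I$ of $R$ there is a non-zero ideal $J$ of $R$ with $(1+J)^{-1} \subseteq 1+I$.

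The core step is the algebraic identity
\[
(1+J)^{-1} \subseteq 1+I \iff J \subseteq I \cap J(R),
\]
for non-zero ideals $I, J$ of $R$. For $(\Leftarrow)$: if $J \subseteq J(R)$ then $1+j \in R^\times$ for each $j \in J$, so $(1+j)^{-1} - 1 = -j(1+j)^{-1} \in J \subseteq I$. For $(\Rightarrow)$: set $r := (1+j)^{-1} - 1 \in I \subseteq R$; then $r(1+j) = -j$ rearranges to $(1+r)(1+j) = 1$, so $1+j \in R^\times$ (forcing $J \subseteq J(R)$ by the characterization of $J(R)$ as the largest ideal with $1+J(R) \subseteq R^\times$), and then $j = -r(1+j) \in I$, giving $J \subseteq I$.

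With this identity, the continuity condition becomes: for every non-zero ideal $I$, the intersection $I \cap J(R)$ contains a non-zero element. If $J(R) = 0$ this fails already at $I = R$, which is itself in the basis. Conversely, if $J(R) \ne 0$, then for any non-zero ideal $I$ one may pick $0 \ne i \in I$ and $0 \ne j \in J(R)$; since $R$ is a domain, $ij$ is a non-zero element of $I \cap J(R)$, so $J := I \cap J(R)$ is a non-zero ideal satisfying $(1+J)^{-1} \subseteq 1+I$. The only substantive step is the algebraic identity above; everything else is routine manipulation of the basis of $\tau_R$ and the definition of $J(R)$.
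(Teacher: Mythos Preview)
Your proof is correct and follows essentially the same approach as the paper: both reduce continuity of inversion to continuity at $1$, use the neighborhood basis $\{1+I : I \text{ a non-zero ideal}\}$, and pivot on the fact that $1+J \subseteq R^\times$ iff $J \subseteq J(R)$. Your packaging of the key step as the explicit biconditional $(1+J)^{-1} \subseteq 1+I \iff J \subseteq I \cap J(R)$ is a slightly cleaner formulation than the paper's more direct computation, but the underlying argument is the same (and your choice $I = R$ in the converse direction corresponds exactly to the paper's use of $R$ as a neighborhood of $1$).
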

This is implicit in the proof of \cite[Theorem~2.2(b)]{PZ}, but we
include the proof for completeness.
\begin{proof}
  First suppose $J(R) \ne 0$.  We claim that division is continuous.
  It suffices to show that the map $f(x) = 1/x$ is continuous.  Since
  $\tau_R$ is a ring topology, the map $x \mapsto ax$ is continuous
  for any $a \in K$.  Using these maps, we reduce to showing that $f$
  is continuous at $x = 1$.  Let $U$ be a neighborhood of 1.  Then $U
  \supseteq 1 + I$ for some non-zero ideal $I$ in $R$.  The
  intersection $I \cap J(R)$ is also non-zero (see
  Remark~\ref{intersection} below), so replacing $I$ with $I \cap
  J(R)$ we may assume $I \subseteq J(R)$.  Then $1 + I \subseteq
  R^{\times}$.  In particular, if $x \in 1+I$ then $x$ is a unit, so
  \begin{equation*}
    1/x - 1 = \frac{1-x}{x} = x^{-1}(1-x) \in x^{-1}I = I,
  \end{equation*}
  and $1/x \in 1 + I \subseteq U$.  Therefore $f(x) = 1/x$ maps the
  neighborhood $1+I$ into $U$, proving continuity.

  Conversely, suppose that division is continuous.  The ring $R$ is a
  neighborhood of 1, so there must be some neighborhood $U$ of 1 such
  that $x \in U \implies x^{-1} \in R$.  Shrinking $U$, we may assume
  $U = 1 + I$ for some non-zero ideal $I$.  If $x \in I$, then $1+x
  \in R$ and $(1+x)^{-1} \in R$ by choice of $U$.  Therefore $1+I
  \subseteq R^\times$, and $I$ is contained in the Jacobson radical
  $J(R)$, which must be non-zero.
\end{proof}
\begin{remark} \label{intersection}
  In an integral domain, the intersection of two non-zero ideals $I$
  and $J$ is itself a non-zero ideal, because if $a \in I \setminus
  \{0\}$ and $b \in J \setminus \{0\}$, then $ab \in I \cap J$.
\end{remark}
In NIP integral domains, the Jacobson radical is always non-trivial:
\begin{lemma} \label{simon-sorta}
  Let $R$ be an NIP integral domain that is not a field.  Then $J(R)
  \ne 0$.
\end{lemma}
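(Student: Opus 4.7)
The plan is to exploit the fact that $J(R) \ne 0$ is expressible by the single first-order sentence
\[
\exists a\,\bigl(a \ne 0 \,\wedge\, \forall y\,\exists z:\,(1+ay)z = 1\bigr),
\]
so it suffices to prove $J(R^*) \ne 0$ in a sufficiently saturated elementary extension $R^* \succ R$, which is still an NIP integral domain and still not a field; the conclusion then descends to $R$ by elementary equivalence.

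To prove $J(R^*) \ne 0$ I would argue by contradiction: if $J(R^*) = 0$, then for every $a \ne 0$ in $R^*$ there is some $y \in R^*$ making $1 + ay$ a non-unit, equivalently $1 + ay \in \mathfrak{m}$ for some maximal ideal $\mathfrak{m} \subseteq R^*$. The goal is to exploit this configuration---together with the Chinese Remainder Theorem applied to distinct maximal ideals of $R^*$---to build an IP pattern for the formula $\phi(a;y) := $ ``$1+ay \in R^\times$'', contradicting NIP. Concretely, one picks distinct maximal ideals $\mathfrak{m}_1, \ldots, \mathfrak{m}_N$ of $R^*$ and, for each $S \subseteq \{1,\ldots,N\}$, uses CRT to build $a_S$ with prescribed residues modulo the $\mathfrak{m}_i$; choosing suitable witness elements $y_i$ supported on the $\mathfrak{m}_i$'s then arranges $\phi(a_S; y_i) \iff i \in S$. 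A sanity check: this argument must fail in $R = \Zz$ (where $J(\Zz) = 0$), and indeed $\Zz$ is not NIP, so the contradiction with NIP is the right output.

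The main obstacle is that CRT only controls residues modulo the chosen $\mathfrak{m}_1, \ldots, \mathfrak{m}_N$, so the truth value of $\phi(a_S; y_i)$ could be disrupted by the behavior of $1 + a_S y_i$ modulo other maximal ideals of $R^*$. Handling this---by carefully choosing the $y_i$ to isolate the relevant maximal ideals, and using the saturation of $R^*$ to supply enough ``independent'' maximal ideals---is the Simon-style technical heart suggested by the lemma's label. The algebraic reduction from $J(R^*) = 0$ to a covering of $R^*\setminus\{0\}$ by finitely many cosets of maximal ideals is straightforward; extracting from such a covering the IP configuration, uniformly enough to contradict the NIP hypothesis on the ambient theory, is the delicate step.
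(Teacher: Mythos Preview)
Your proposal is a plan rather than a proof, and you explicitly leave the hard part undone: you correctly identify that the formula $\phi(a;y)$ asserting ``$1+ay \in R^\times$'' is sensitive to \emph{all} maximal ideals of $R^*$, not just the finitely many $\mm_1,\ldots,\mm_N$ you have selected, and you do not explain how to overcome this. The obstacle is genuine. Being a unit is a global condition, and CRT gives you no control over the residue of $1 + a_S y_i$ at maximal ideals outside your finite list, so there is no evident reason $1 + a_S y_i$ should be a unit exactly when $i \in S$. Without resolving this, no IP pattern has been produced and the argument does not close. Saturation does not obviously help: it supplies \emph{more} maximal ideals, which only makes the global unit condition harder to control.

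You are also working much harder than necessary. The paper's proof is three lines: by a theorem of Simon, an NIP integral domain has only finitely many maximal ideals $\mm_1,\ldots,\mm_n$; since $R$ is not a field each $\mm_i$ is nonzero; and in an integral domain a finite intersection of nonzero ideals is nonzero (pick a nonzero element from each and take their product), so $J(R) = \bigcap_i \mm_i \ne 0$. No saturation, no elementary extensions, no IP-pattern construction. Your plan amounts to reproving Simon's finiteness theorem from scratch inside the argument, and with a formula (unit-ness) that is ill-suited to the task precisely because it is global rather than local to a single maximal ideal.
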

\begin{proof}
  By a theorem of Simon \cite[Proposition~2.1]{halevi-delbee}, $R$ has
  only finitely many maximal ideals $\mm_1, \ldots, \mm_n$.  No
  $\mm_i$ can vanish, or else $R$ would be a field.  Then the Jacobson
  radical is an intersection of finitely many non-zero ideals, and is
  therefore non-zero by Remark~\ref{intersection}.
\end{proof}
Combining Lemma~\ref{simon-sorta}, Fact~\ref{j-fact}, and
Remark~\ref{aff-basis}, we conclude:
\begin{corollary} \label{new-cor}
  If $R$ is an NIP integral domain and $K = \Frac(R) \ne R$, then the
  $R$-adic topology $\tau_R$ is a field topology, and $\tau_R$ is
  definable in the pair $(K,R)$.
\end{corollary}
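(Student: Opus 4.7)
The plan is straightforward: I would simply assemble the three ingredients flagged right before the corollary statement, since each does essentially all the work.

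First, I would invoke Lemma~\ref{simon-sorta} to conclude that $J(R) \ne 0$. This step uses the hypothesis $K = \Frac(R) \ne R$ in the form ``$R$ is not a field'', which is what Lemma~\ref{simon-sorta} needs, together with the NIP hypothesis.

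Next, I would apply Fact~\ref{j-fact} to this non-vanishing Jacobson radical to conclude that $\tau_R$ is a field topology and not merely a ring topology. Finally, I would quote Remark~\ref{aff-basis}, which explicitly exhibits the basis $\{aR + b : a \in K^\times, b \in K\}$ of $\tau_R$ as a definable family in the pair $(K,R)$, yielding definability in the pair.

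I do not anticipate any obstacle: all of the substantive content lives in Lemma~\ref{simon-sorta} (which quotes Simon's finiteness of maximal ideals) and in Fact~\ref{j-fact} (whose proof is given in full just above). The only thing to notice is that the hypothesis ``$K \ne R$'' is exactly what rules out $R$ being a field, so Lemma~\ref{simon-sorta} applies directly.
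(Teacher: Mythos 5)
Your proposal is correct and matches the paper's argument exactly: the paper derives Corollary~\ref{new-cor} precisely by combining Lemma~\ref{simon-sorta}, Fact~\ref{j-fact}, and Remark~\ref{aff-basis} in the same order and for the same reasons. Nothing to add.
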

%% Consequently, the $R$-adic topology is a field topology when $R$ is
%% NIP.  With Remark~\ref{aff-basis} for definability, this completes the
%% proof of Fact~\ref{main-fact} in the introduction.

\section{Local boundedness} \label{sec:locbound}
In this section, we prove the core technical result of the paper, the
fact that definable ring topologies on NIP fields are locally bounded (Theorem~\ref{loc-bdd-thm2}).
We first need some combinatorial lemmas about vector spaces.  Fix an
\emph{infinite} field $K_0$.  The following is well-known:
\begin{fact} \label{cover-fail}
  If $V$ is a $K_0$-vector space, then $V$ is not a finite union of
  proper $K_0$-linear subspaces.
\end{fact}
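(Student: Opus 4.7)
The plan is to argue by contradiction using the affine-line trick. Suppose $V = W_1 \cup \cdots \cup W_n$ with each $W_i$ a proper subspace, and take such a decomposition with $n$ minimal. Then no $W_i$ is contained in the union of the others, so for each $i$ there exists an element of $W_i$ missing from all the other $W_j$. The case $n = 1$ is immediate because $W_1$ is proper.

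Assuming $n \ge 2$, the plan is to pick $v \in W_1 \setminus (W_2 \cup \cdots \cup W_n)$, together with any $w \in V \setminus W_1$, and then examine the affine line $\{v + tw : t \in K_0\}$. The first observation is that for $t \ne 0$ we have $v + tw \notin W_1$, for if $v + tw \in W_1$ then $tw \in W_1$, hence $w \in W_1$, contrary to the choice of $w$. Therefore each of the infinitely many points $v + tw$ with $t \ne 0$ must lie in $W_2 \cup \cdots \cup W_n$.

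Since $K_0$ is infinite, the pigeonhole principle produces some $i \in \{2,\ldots,n\}$ and distinct nonzero $t_1, t_2 \in K_0$ such that $v + t_1 w$ and $v + t_2 w$ both lie in $W_i$. Subtracting, $(t_1 - t_2) w \in W_i$, so $w \in W_i$; then $v = (v + t_1 w) - t_1 w \in W_i$ as well, contradicting the choice of $v$. I expect no real obstacle here: the argument uses only that $K_0$ is infinite (to invoke pigeonhole on the affine line) and the linear structure of the $W_i$.
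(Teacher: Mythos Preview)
Your argument is correct: this is the standard affine-line proof that a vector space over an infinite field cannot be covered by finitely many proper subspaces. The minimality of $n$ guarantees the existence of $v \in W_1 \setminus (W_2 \cup \cdots \cup W_n)$, and the pigeonhole step goes through because $K_0$ is infinite.

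The paper does not actually prove this fact; it simply records it as well-known and cites the more general group-theoretic statement (no group is a finite union of cosets of infinite-index subgroups). So your approach is more elementary and self-contained, while the paper's citation situates the fact as a special case of a broader result. Either is perfectly adequate here, since the statement is only used as a black box in the subsequent lemmas.
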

More generally, no group is a finite union of infinite index subgroups or their cosets \cite[Lemma~6.25]{P-book}.
\begin{lemma} \label{cover-rule}
  Let $V$ and $V'$ be two subspaces of a $K_0$-vector space.  Let
  $W_1,\ldots,W_n$ be proper subspaces of $V$.  Then
    \begin{equation*}
      V' \supseteq V \setminus (W_1 \cup \cdots \cup W_n) \iff V'
      \supseteq V.
    \end{equation*}
\end{lemma}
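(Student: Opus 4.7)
The $(\Leftarrow)$ direction is immediate. For $(\Rightarrow)$, the plan is to show that every $v \in V$ can be written as a difference of two elements of $V \setminus (W_1 \cup \cdots \cup W_n)$, both of which lie in $V'$ by hypothesis.

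Fix $v \in V$. If $v$ itself avoids $W_1 \cup \cdots \cup W_n$, there is nothing to do, so suppose $v$ lies in some $W_i$. The trick is to find a single witness $w \in V$ that simultaneously satisfies $w \notin W_i$ for all $i$ and $v + w \notin W_i$ for all $i$. Given such a $w$, both $w$ and $v + w$ belong to $V \setminus (W_1 \cup \cdots \cup W_n) \subseteq V'$, hence $v = (v+w) - w \in V'$.

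Finding $w$ amounts to showing that $V$ is not covered by the $2n$ sets $W_1, \ldots, W_n, (-v + W_1), \ldots, (-v + W_n)$, which are finitely many cosets of the proper $K_0$-subspaces $W_i$ of $V$. Since $K_0$ is infinite, each quotient $V/W_i$ is an infinite $K_0$-vector space, so each $W_i$ has infinite index as a subgroup of $V$. By the generalization of Fact~\ref{cover-fail} cited in the excerpt (no group is a finite union of cosets of infinite-index subgroups, \cite[Lemma~6.25]{P-book}), such a $w$ exists.

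The argument is essentially a one-step covering argument; the only place where care is needed is checking that the $W_i$, as additive subgroups of $V$, genuinely have infinite index, which is where the hypothesis that $K_0$ is infinite enters. No serious obstacle is anticipated.
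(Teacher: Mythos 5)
Your proof is correct, but it takes a genuinely different route from the paper's. The paper argues by contradiction: if $V' \not\supseteq V$, then $X := V' \cap V$ is a \emph{proper subspace} of $V$, and the containment hypothesis immediately gives $V = W_1 \cup \cdots \cup W_n \cup X$, contradicting Fact~\ref{cover-fail} that a vector space over the infinite field $K_0$ is never a finite union of proper subspaces. Your argument instead fixes an arbitrary $v \in V$ and writes it as a difference of two elements that avoid $\bigcup_i W_i$, which requires dodging the $2n$ cosets $W_i$ and $-v + W_i$; this relies on the stronger B.~H.~Neumann-type covering lemma (no group is a finite union of cosets of infinite-index subgroups), which the paper mentions only in passing. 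Both are sound. The paper's version is slicker because it observes that $V' \cap V$ is itself a subspace, so it never has to reason about cosets at all and can get by with the plain subspace-covering fact; your version avoids the observation about $V' \cap V$ at the cost of invoking a heavier covering result. One small remark: you do correctly handle the only delicate point, namely that properness of $W_i$ plus infiniteness of $K_0$ forces $W_i$ to have infinite additive index in $V$, which is exactly what the Neumann lemma needs.
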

The intuition is that subspaces are ``closed'', and $V$ is the
``closure'' of $V \setminus (W_1 \cup \cdots \cup W_n)$.
\begin{proof}
  The right-to-left direction is trivial.  Suppose the left-to-right
  direction fails.  Then $X := V' \cap V$ is a proper subspace of $V$,
  and $V = W_1 \cup \cdots \cup W_n \cup X$, contradicting
  Fact~\ref{cover-fail}.
\end{proof}
Fix some set $U$.  A family of subsets $V_1,\ldots,V_n \subseteq U$ is
\emph{independent} if for any $S \subseteq \{1,\ldots,n\}$, the set
$A_S$ is non-empty, where
\begin{align*}
  A_S & = \bigcap_{i \in S} V_i \setminus \bigcup_{i \notin S} V_i \\
  &= \{x \in U : x \in A_i \iff i \in S \text{ for all } i \le n\}.
\end{align*}
Equivalently, the family $V_1,\ldots,V_n$ is
independent if there exist $a_S \in U$ for each $S \subseteq
\{1,\ldots,n\}$ such that
\begin{equation*}
  a_S \in V_i \iff i \in S \text{ for all } i, S.
\end{equation*}
\begin{lemma} \label{extend-1}
  Let $V_1,\ldots,V_n$ be subspaces of some $K_0$-vector space.
  Suppose $V_1,\ldots,V_n$ are independent, witnessed by elements $a_S$ for $S
  \subseteq \{1,\ldots,n\}$.  Suppose $W$ is some other subspace, such
  that
  \begin{gather*}
    a_S \in W \text{ for each } S \\
    W \not \supseteq V_1 \cap \cdots \cap V_n.
  \end{gather*}
  Then the family $V_1,\ldots,V_n,W$ is independent.
\end{lemma}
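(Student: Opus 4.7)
The plan is to construct, for each $T \subseteq \{1,\ldots,n+1\}$, an explicit witness $b_T$ demonstrating independence of the family $V_1,\ldots,V_n,W$. The strategy is to reuse the given witnesses $a_S$ for the $V_i$'s, and to use a single auxiliary vector from $(V_1 \cap \cdots \cap V_n) \setminus W$ as a device to toggle membership in $W$ without disturbing memberships in the $V_i$'s.

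First I would invoke the hypothesis $W \not\supseteq V_1 \cap \cdots \cap V_n$ to pick some $v \in V_1 \cap \cdots \cap V_n$ with $v \notin W$. Then, for each $T \subseteq \{1,\ldots,n+1\}$, define
\[
  b_T = \begin{cases} a_{T \setminus \{n+1\}}, & n+1 \in T, \\ a_T + v, & n+1 \notin T. \end{cases}
\]
I would then verify the required membership pattern. The case $n+1 \in T$ is immediate: the pattern in $V_1,\ldots,V_n$ comes straight from the independence of the $V_i$'s, and $b_T \in W$ by the standing hypothesis that every $a_S$ lies in $W$. For $n+1 \notin T$, the key observation is that $v \in V_i$ for every $i \le n$, so since each $V_i$ is a subspace, $b_T = a_T + v$ lies in $V_i$ if and only if $a_T$ does, i.e., if and only if $i \in T$. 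For membership in $W$, note that $a_T \in W$ by hypothesis while $v \notin W$, so $b_T = a_T + v \notin W$ because $W$ is a subspace.

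There is no real obstacle here; the whole proof amounts to the observation that a single vector $v$ serves uniformly to flip $W$-membership on all of the $a_T$ while preserving their patterns in the $V_i$'s. Notably, this argument does not appear to require Lemma~\ref{cover-rule} or the infinitude of $K_0$, so I would expect those tools to be deployed elsewhere in the section, in lemmas built on top of this one.
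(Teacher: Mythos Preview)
Your proof is correct and genuinely different from the paper's. The paper does not construct explicit witnesses; instead, for each $S \subseteq \{1,\ldots,n\}$ it writes the Boolean cell $A_S = \bigcap_{i \in S} V_i \setminus \bigcup_{i \notin S} V_i$ as $\overline{A_S} \setminus \bigcup_{i \notin S} (\overline{A_S} \cap V_i)$ with $\overline{A_S} = \bigcap_{i \in S} V_i$, then invokes Lemma~\ref{cover-rule} to conclude that $W \supseteq A_S$ would force $W \supseteq \overline{A_S} \supseteq V_1 \cap \cdots \cap V_n$, contrary to hypothesis. This gives $A_S \setminus W \ne \varnothing$, while $a_S$ witnesses $A_S \cap W \ne \varnothing$. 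Your argument instead fixes a single $v \in (V_1 \cap \cdots \cap V_n) \setminus W$ and uses $a_S + v$ as an explicit element of $A_S \setminus W$; this is shorter, avoids Lemma~\ref{cover-rule} entirely, and in fact works verbatim for subgroups of an abelian group with no hypothesis on $K_0$. (Your closing remark is slightly off: in the paper Lemma~\ref{cover-rule} is used \emph{only} here, so your argument renders it unnecessary for Section~\ref{sec:locbound}; the underlying Fact~\ref{cover-fail} does reappear later in Lemma~\ref{RR}.)
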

\begin{proof}
  For $S \subseteq \{1,\ldots,n\}$, let $A_S$ be the corresponding
  boolean combination of $V_1,\ldots,V_n$:
  \begin{equation*}
    A_S = \bigcap_{i \in S} V_i \setminus \bigcup_{i \notin S} V_i.
  \end{equation*}
  In particular, $a_S \in A_S$.  Also let
  \begin{equation*}
    \overline{A_S} = \bigcap_{i \in S} V_i.
  \end{equation*}
  Then $\overline{A_S}$ is a linear subspace of the ambient vector
  space.  Note that
  \begin{equation*}
    A_S = \overline{A_S} \setminus \bigcup_{i \notin S} V_i = \overline{A_S} \setminus \bigcup_{i \notin S} (\overline{A_S} \cap V_i).
  \end{equation*}
%%   \begin{equation*}
%%     A_S = \overline{A_S} \setminus \bigcup_{S' \supseteq S}
%%     \overline{A_{S'}}.
%%   \end{equation*}
  By Lemma~\ref{cover-rule},
  \begin{equation*}
    W \supseteq A_S \iff W \supseteq \overline{A_S}.
  \end{equation*}
  But $\overline{A_S}$ contains the intersection $V_1 \cap \cdots \cap
  V_n$, which $W$ does not contain.  Therefore $W \not \supseteq A_S$
  for each $S$, meaning that $A_S \setminus W \ne \varnothing$ for
  each $S$.  On the other hand, the element $a_S$ shows that $A_S \cap
  W \ne \varnothing$ for each $S$.  It follows that
  $\{V_1,\ldots,V_n,W\}$ is independent.
\end{proof}
\begin{theorem} \label{loc-bdd-thm2}
  Let $\tau$ be a definable ring topology on an NIP field
  $(K,+,\cdot,\ldots)$, possibly with extra structure.  Then $\tau$ is
  locally bounded.
\end{theorem}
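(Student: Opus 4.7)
The plan is to argue by contradiction using NIP. Suppose $\tau$ is not locally bounded. Then in some definable basis $\mathcal B$ of neighborhoods of $0$ (which may be assumed closed under finite intersection and symmetrization), every $U \in \mathcal B$ admits a witness $V_U \in \mathcal B$ with $cU \not\subseteq V_U$ for every $c \in K^\times$. Passing to a sufficiently saturated elementary extension, I would moreover arrange that $U \mapsto V_U$ is uniformly definable. The goal is to convert this failure of boundedness into an unbounded VC-pattern realized in a single definable formula, contradicting NIP.

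The engine is Lemma \ref{extend-1}, which controls when an independent family of $K_0$-subspaces (for some infinite subfield $K_0 \subseteq K$, available by saturation) can be extended by a further subspace. My plan is to construct inductively a sequence $W_1, W_2, \ldots$ of $K_0$-subspaces of $K$ that form an independent family in the sense preceding Lemma \ref{extend-1}, such that each $W_i$ arises definably from a neighborhood in $\mathcal B$, together with the associated witnesses $a_S$ for $S \subseteq \{1,\ldots,n\}$. Once such a family of length $n$ is in hand, the $2^n$ witnesses realize all Boolean combinations of membership in the $W_i$; read off against the single parametric definable formula defining the family, this is a configuration of VC-dimension $n$, and letting $n \to \infty$ contradicts NIP.

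The main obstacle is carrying out the inductive step: given $W_1, \ldots, W_n$ with witnesses $a_S$, one must produce a new $K_0$-subspace $W_{n+1}$ that (i) contains every $a_S$ and (ii) fails to contain the intersection $W_1 \cap \cdots \cap W_n$, matching the hypotheses of Lemma \ref{extend-1}. This is where non-local-boundedness enters: applied to a neighborhood $U$ whose associated subspace already encodes the $a_S$, the bad witness $V_U$ together with a choice of dilating scalar $c \in K^\times$ furnishes some element in $cU$ lying outside $V_U$, and this separation is converted into a new $K_0$-subspace $W_{n+1}$ satisfying (i) and (ii). The delicate point — and the crux of the argument — is selecting $U$ and the dilation $c$ at each stage so that (i) comes for free from the shape of the subspace attached to $U$, while (ii) follows from the specific failure of containment; Lemma \ref{cover-rule} is what lets us pass between the finitely many ``exceptional subspaces'' appearing along the way and the subspaces we really care about. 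Once this bookkeeping is in place, Lemma \ref{extend-1} closes the induction and the resulting IP-pattern contradicts NIP, proving that $\tau$ must be locally bounded.
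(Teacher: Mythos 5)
Your proposal is correct in outline and follows the same strategy as the paper: argue by contradiction, use Lemmas \ref{cover-rule} and \ref{extend-1} as the combinatorial engine, build inductively an independent family of $K_0$-subspaces using the failure of local boundedness to supply the ``does not contain $W_1 \cap \cdots \cap W_n$'' hypothesis of Lemma \ref{extend-1}, and deduce a violation of NIP. The inductive step you sketch (rescale a bad witness neighborhood $V$ by $c \in \bigcap_S a_S^{-1}V$ to get the new subspace containing all the $a_S$) is exactly what the paper does.

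However, you have glossed over the central technical bridge, and as written there is a genuine gap. You say ``each $W_i$ arises definably from a neighborhood in $\mathcal B$'' and later that the $2^n$ witnesses are ``read off against the single parametric definable formula defining the family.'' But basic neighborhoods are not $K_0$-subspaces, and $K_0$-subspaces are not definable, so neither of these statements is true of the same object. The paper resolves this tension with the notion of a \emph{special neighborhood}: a $K_0$-linear subspace of $K$ that is simultaneously a countable descending intersection $\bigcap_{j} U_j$ of basic neighborhoods. Claim \ref{enough-special} shows such objects exist cofinally in the neighborhood filter at $0$ (by recursively choosing $U_{j}$ small enough that $U_j - U_j \subseteq U_{j-1}$ and $a_i U_j \subseteq U_{j-1}$ for the first $j$ enumerated elements of $K_0$); this is what lets one apply the subspace lemmas at all. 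And then, crucially, Claim \ref{at-last} does the reverse conversion: given an independent family $G_1,\dots,G_n$ of special neighborhoods with witnesses $a_S$, writing $G_i = \bigcap_j U_{i,j}$ and choosing $j$ large enough uniformly in the finitely many pairs $(i,S)$ yields an independent family of genuine definable basic neighborhoods $U_{1,j},\dots,U_{n,j}$. Only then is the IP-pattern realized in a single definable formula and the contradiction with NIP obtained. Both steps are non-obvious and are where the real work lies; without them, the argument does not close.
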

\begin{proof}
  If $L$ is an elementary extension of $K$, then the formulas defining
  $\tau$ on $K$ define some ring topology $\tau_L$ on $L$, and
  $\tau_L$ is locally bounded if and only if $\tau$ is locally bounded
  \cite[\S 2, p.\@ 322]{PZ}.  Consequently, we may assume that $K$ is
  highly saturated.

  Fix a definable basis $\mathcal{B}$ of opens.  Replacing
  $\mathcal{B}$ with $\{aU + b : a \in K^\times, ~ U \in \mathcal{B},
  ~ b \in K\}$, we may assume that the image of a basic open set under
  an affine transformation is still basic.  A \emph{basic
  neighborhood} will be a basic open set containing 0.  The basic
  neighborhoods constitute a definable basis of neighborhoods of 0.  If
  $U$ is a basic neighborhood, and $c \in K^\times$, then $cU$ is also
  a basic neighborhood.

  Because $K$ admits a ring topology, it is infinite.  Fix a countably
  infinite subfield $K_0$.  A \emph{special neighborhood} is a set $G
  \subseteq K$ with the following properties:
  \begin{enumerate}
  \item There is a descending chain of basic neighborhoods $U_0
    \supseteq U_1 \supseteq \cdots$ (of length $\omega$) such that $G
    = \bigcap_{i = 0}^\infty U_i$.
  \item $G$ is a $K_0$-linear subspace of $K$.
  \end{enumerate}
  \begin{claim} \label{claim3.5}
    Any special neighborhood $G$ is a neighborhood of 0.
  \end{claim}
  \begin{claimproof}
    Writing $G$ as $\bigcap_{i = 0}^\infty U_i$ for some basic
    neighborhoods $U_0 \supseteq U_1 \supseteq \cdots$, we can use
    saturation to find a basic neighborhood $U_\omega \subseteq \cdots
    \subseteq U_1 \subseteq U_0$.  Then $U_\omega \subseteq G$, so $G$
    is a neighborhood of 0.
  \end{claimproof}
  \begin{claim} \label{enough-special}
    Special neighborhoods form a neighborhood basis of 0.  That is,
    any neighborhood $U \ni 0$ contains a special neighborhood $G$.
  \end{claim}
  \begin{claimproof}
    Take a basic neighborhood $U_0 \subseteq U$.  Let
    $\{a_1,a_2,\ldots\}$ enumerate $K_0$.  Recursively build a chain
    of basic neighborhoods $U_1, U_2, U_3, \ldots$, so that
    \begin{gather*}
      U_i \subseteq U_{i-1} \\
      U_i - U_i \subseteq U_{i-1} \\
      a_jU_i \subseteq U_{i-1} \text{ for } j = 1, \ldots, i
    \end{gather*}
%%     \begin{equation*}
%%       U_{i-1} \supseteq U_i \cup (U_i - U_i) \cup
%%       \bigcup_{j = 1}^i (a_j \cdot U_i)
%%     \end{equation*}
    for each $i$.  At each step, it is possible to choose $U_i$ by
    continuity of the ring operations---any sufficiently small $U_i$
    will work.  Take $G = \bigcap_{i = 1}^\infty U_i$.  Then $G$ is an
    intersection of a descending chain of basic neighborhoods.
    Clearly $0 \in G$.  If $x,y \in G$ and $i < \omega$, then
    \begin{equation*}
      x-y \in G-G \subseteq U_{i+1} - U_{i+1} \subseteq U_i \text{ for any } i,
    \end{equation*}
    and so $x-y \in G$.  This shows that $G-G \subseteq G$, and $G$ is
    closed under subtraction.  Finally, for any $j < \omega$ and $x \in
    G$, we have
    \begin{equation*}
      a_j \cdot x \in a_j \cdot U_{i+1} \subseteq U_i \text{ for any } i \ge j
    \end{equation*}
    implying that $a_j x \in G$ and $G$ is closed under multiplication
    by $K_0$.  Thus $G$ is a special neighborhood and $G \subseteq U_0
    \subseteq U$.
  \end{claimproof}
  \begin{claim} \label{phew}
    If $G$ is a special neighborhood and $c \in K^\times$, then $cG$
    is a special neighborhood.
  \end{claim}
  \begin{claimproof}
    This follows easily from the analogous fact for basic
    neighborhoods, which we arranged.
  \end{claimproof}
  Now, suppose for the sake of contradiction that $\tau$ is not
  locally bounded.  Then no neighborhood $U \ni 0$ is bounded.  By
  definition of ``bounded'', this means the following:
  \begin{quote}
    If $U$ is a neighborhood of 0, then there is a
    neighborhood $V \ni 0$ such that for any $c \in K^\times$, $cU \not
    \subseteq V$.
  \end{quote}
  Shrinking $V$, we may assume that $V$ is a special neighborhood
  (Claim~\ref{enough-special}).
  \begin{claim} \label{claim3.8}
    For any $n$, there is an independent sequence of special
    neighborhoods $G_1,\ldots,G_n$.
  \end{claim}
  \begin{claimproof}
    Build the sequence by induction on $n$.  When $n = 0$ there is
    nothing to build.  Suppose $n > 0$, and let $G_1,\ldots,G_{n-1}$
    be a given independent sequence of special neighborhoods.  Choose
    elements $a_S$ for $S \subseteq \{1,\ldots,n-1\}$ witnessing
    independence, meaning that
    \begin{equation*}
      a_S \in G_i \iff i \in S.
    \end{equation*}
    Replacing $a_{\{1,\ldots,n-1\}}$ with a non-zero element of the
    neighborhood $G_1 \cap \cdots \cap G_{n-1}$, we may assume that
    every $a_S$ is non-zero.

    By Lemma~\ref{extend-1}, it suffices to produce a special
    neighborhood $G_n$ such that
    \begin{gather*}
      a_S \in G_n \text{ for every } S \\
      G_n \not \supseteq G_1 \cap \cdots \cap G_{n-1}.
    \end{gather*}
    The set $G_1 \cap \cdots \cap G_{n-1}$ is a neighborhood of 0.  By
    failure of local boundedness, it is not bounded.  Therefore, there
    is a special neighborhood $V$ such that
    \begin{equation*}
      c(G_1 \cap \cdots \cap G_{n-1}) \not \subseteq V \text{ for any
      } c \in K^\times.
    \end{equation*}
    Take non-zero $c$ in the neighborhood
    \begin{equation*}
      \bigcap_{S \subseteq \{1,\ldots,n-1\}} a_S^{-1} \cdot V.
    \end{equation*}
    Then $c \in a_S^{-1} V$ for each $S$, implying $a_S \in c^{-1}V$.
    Take $G_n = c^{-1}V$; this is special by Claim~\ref{phew}.  Then $a_S \in G_n$ for each $S$, and
    \begin{equation*}
      G_1 \cap \cdots \cap G_{n-1} \not \subseteq c^{-1}V = G_n
    \end{equation*}
    by choice of $V$.
  \end{claimproof}
  \begin{claim}\label{at-last}
    For any $n$, there is an independent sequence of basic
    neighborhoods $U_1, \ldots, U_n$.
  \end{claim}
  \begin{claimproof}
    Take an independent sequence of special neighborhoods
    $G_1,\ldots,G_n$, with independence witnessed by $a_S$ for $S
    \subseteq \{1,\ldots,n\}$, so that
    \begin{equation*}
      a_S \in G_i \iff i \in S.
    \end{equation*}
    Write $G_i$ as an intersection of a decreasing chain of basic
    neighborhoods $G_i = \bigcap_{j = 0}^\infty U_{i,j}$ with $U_{i,0}
    \supseteq U_{i,1} \supseteq \cdots$.

    For fixed $i$ and $S$, note that
    \begin{equation*}
      a_S \in U_{i,j} \iff a_S \in G_i \iff i \in S, \text{ for all
        sufficiently large $j$.}
    \end{equation*}
    As there are only finitely many pairs $(i,S)$, we can fix some $j$ so
    large that it works for every $(i,S)$.  Then
    \begin{equation*}
      a_S \in U_{i,j} \iff i \in S \text{ for any $i$, $S$},
    \end{equation*}
    and the family $U_{1,j},\ldots,U_{n,j}$ is independent.
  \end{claimproof}
  Finally, Claim~\ref{at-last} contradicts NIP, as the basic
  neighborhoods are uniformly definable.
\end{proof}

\section{The main theorem} \label{secmt}
\begin{theorem} \label{main-thm2}
  Let $K$ be a sufficiently saturated NIP field, possibly with extra
  structure.  Let $\tau$ be a definable ring topology on $K$.  Then
  there is an externally definable proper subring $R \subsetneq K$
  with $\Frac(R) = K$, such that $\tau$ is the $R$-adic topology
  $\tau_R$.

  We can also arrange for $R$ to be a $K_0$-algebra for some small
  elementary substructure $K_0 \preceq K$.
\end{theorem}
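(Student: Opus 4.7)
The plan is to construct $R$ directly from a bounded basic neighborhood supplied by local boundedness (Theorem~\ref{loc-bdd-thm2}). Fix a small elementary substructure $K_0 \preceq K$ over which $\tau$ is defined, and pick a $K_0$-definable bounded basic neighborhood $U_0$ of $0$. Set
\[
R \;=\; \bigcup_{c \in K_0^\times} c U_0,
\]
a ``$K_0$-convex hull'' of $U_0$ generalizing Example~\ref{ex1.4} (where $K_0 = \Rr$, $U_0 = [-1,1]$, and $R$ is the convex hull of $\Nn$).

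I expect the algebraic verifications to be routine, driven by elementarity of $K_0 \preceq K$. Because $U_0$ is $K_0$-definable and $\tau$-bounded, each of $U_0 + U_0$, $U_0 \cdot U_0$, and $-U_0$ is a $K_0$-definable bounded set, so by elementarity each is contained in $eU_0$ for some $e \in K_0$ (apply elementarity to ``$\exists e, X \subseteq eU_0$''); this gives closure of $R$ under the ring operations. The inclusion $K_0 \subseteq R$ follows because $U_0 \cap K_0^\times$ is non-empty by elementarity, so $R$ is a $K_0$-algebra. For $\Frac(R) = K$, given $x \in K^\times$, continuity of multiplication at $0$ produces a non-zero $b \in U_0$ with $xb \in U_0$, so $x = (xb)/b \in \Frac(R)$. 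Properness and $\tau$-boundedness of $R$ both rely on the sufficient saturation of $K$: the partial type $\{x \notin c U_0 : c \in K_0\}$ is consistent because $K$ is $\tau$-unbounded (Fact~\ref{pz-bdd}(1)), witnessing $R \subsetneq K$; dually, $\{c U_0 \subseteq y U_0 : c \in K_0\}$ is consistent because any finite union of $c U_0$'s is bounded, witnessing $R \subseteq y U_0$ for some $y \in K^\times$. Since $R$ is then a $\tau$-bounded neighborhood of $0$, Fact~\ref{pz-bdd}(5) and Fact~\ref{r-adic} together give $\tau = \tau_R$.

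I expect external definability to be the main obstacle, since $R$ is \emph{a priori} only presented as a union of definable sets indexed by the small model $K_0$. The plan is to pass to a sufficiently saturated elementary extension $K^* \succeq K$ and realize a single parameter $d \in K^*$ with $R = dU_0 \cap K$. This reduces to checking consistency of the partial type over $K$
\[
p(y) \;=\; \{x \in y U_0 : x \in R\} \;\cup\; \{x \notin y U_0 : x \in K \setminus R\}.
\]
A finite subset of $p$ mentions finite $F_1 \subseteq R$ and $F_2 \subseteq K \setminus R$; by the boundedness argument above, $F_1 \subseteq c U_0$ for some $c \in K_0$, and then $F_2 \cap c U_0 \subseteq F_2 \cap R = \varnothing$, so $y = c$ realizes that fragment. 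Realizing $p$ in $K^*$ produces $d$ with $R = d U_0 \cap K$, exhibiting $R$ as the trace on $K$ of a $K^*$-definable set. Shelah's theorem (\cite[Corollary~3.24]{NIPguide}) then ensures that $(K,R)$ is NIP, closing the promised circle between NIP topological fields and NIP integral domains.
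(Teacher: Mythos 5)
Your proof is correct and follows essentially the same route as the paper: both construct $R$ as the ``$K_0$-convex hull'' $\bigcup_{c \in K_0^\times} cU_0$ of a bounded basic neighborhood supplied by Theorem~\ref{loc-bdd-thm2}, and verify the same list of properties (ring closure via boundedness and elementarity, $K_0 \subseteq R$, $\Frac(R)=K$ via continuity of multiplication, properness and $U \subseteq R \subseteq cU$ via saturation, and finally $\tau = \tau_R$ from Fact~\ref{pz-bdd}). The genuine point of divergence is the external definability of $R$: the paper observes that $R$ is a directed union of uniformly definable sets and simply cites \cite[Remark~2.9]{fpcase}, whereas you reprove this from scratch via a type-realization argument, checking consistency of the type $p(y)$ over $K$ and realizing it in a saturated extension to obtain a single parameter $d$ with $R = dU_0 \cap K$. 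Both are standard, and your self-contained version is a perfectly good substitute for the citation. One small presentational gap: containment $U_0 + U_0 \subseteq eU_0$ by itself does not immediately handle $x \in c_1 U_0$, $y \in c_2 U_0$ with $c_1 \ne c_2$; you need either to argue directly that $c_1 U_0 + c_2 U_0$ is $K_0$-definable and bounded (Fact~\ref{pz-bdd}(4)) and hence contained in some $e U_0$ with $e \in K_0^\times$, or to first invoke directedness of the family $\{cU_0 : c \in K_0^\times\}$, which you do use later for properness and boundedness. The ingredient is available, it just needs to be stated where ring closure is verified.
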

% TODO: comment that this technique is well-known...
\begin{proof}
  Fix a definable basis of open sets.  By Theorem~\ref{loc-bdd-thm2},
  the topology is locally bounded, and so there is some bounded
  neighborhood $U \ni 0$.  Shrinking $U$, we may assume that $U$ is
  basic.  Then $U$ is definable.  By Fact~\ref{pz-bdd}(6), the family
  of sets $\{cU : U \in K^\times\}$ is a basis for the ideal of
  bounded sets.

  Fix a small elementary substructure $K_0$ defining $U$ and the
  topology $\tau$.  Note the following consequences of
  Fact~\ref{pz-bdd}:
  \begin{itemize}
  \item If $B \subseteq K$ is $K_0$-definable, then $B$ is bounded if
    and only if $B \subseteq cU$ for some $c \in K_0^\times$.  This is
    essentially Fact~\ref{pz-bdd}(6); we can arrange for $c$ to be in
    $K_0$ because $K_0 \preceq K$.
  \item If $B_1, B_2 \subseteq K$ are $K_0$-definable and bounded,
    then $B_1 + B_2$, $B_1 \cdot B_2$, and $B_1 \cup B_2$ are
    $K_0$-definable and bounded (Fact~\ref{pz-bdd}(4)).
  \item If $c \in K_0$, then $\{c\}$ is $K_0$-definable and bounded
    (Fact~\ref{pz-bdd}(2)).
  \end{itemize}
  Let $R$ be the union of all $K_0$-definable bounded sets.  By the
  above, $R$ is a subring of $K$ containing $K_0$, and
  \begin{equation*}
    R = \bigcup \{cU : c \in K_0^\times\}.
  \end{equation*}
  This union is directed: given $c_1, c_2 \in K_0^\times$, the set
  $c_1U \cup c_2U$ is bounded and $K_0$-definable, so it is contained
  in $c_3U$ for some $c_3 \in K_0^\times$.  A directed union of
  uniformly definable sets is always externally definable \cite[Remark~2.9]{fpcase}.  In
  summary, $R$ is an externally definable $K_0$-subalgebra of $K$.

  The ring $R$ is also $\vee$-definable, a small union of definable
  sets.  If $R = K$, then saturation would imply that $cU = K$ for
  some $c \in K_0^\times$.  But then $U = K$, and $K$ is bounded,
  contradicting Fact~\ref{pz-bdd}(1).  Therefore $R \subsetneq K$.

  On the other hand, $\Frac(R) = K$.  To see this, first note that $U
  = 1 \cdot U \subseteq R$.  Next, fix any $a \in K^\times$.  The set
  $U \cap aU$ is a neighborhood of 0, so it contains a non-zero
  element $\delta$ by non-discreteness.  Then $\delta$ and $\delta/a$
  are both in $U \subseteq R$, and so their quotient $a$ is in
  $\Frac(R)$.

  Because $R$ is a proper subring of $K$ and $\Frac(R) = K$, there is
  a well-defined $R$-adic topology $\tau_R$, characterized by the fact
  that $\{cR : c \in K^\times\}$ is a neighborhood basis of 0.  It
  remains to show that $\tau = \tau_R$.

  Recall that $R$ is the union of all $K_0$-definable bounded sets.
  Because every finite union of $K_0$-definable bounded sets is
  contained in a set of the form $cU$ with $c \in K_0^\times$,
  saturation gives some $c \in K^\times$ such that $R \subseteq cU$.
  Then $U \subseteq R \subseteq cU$, which implies $\tau = \tau_R$ since
  \begin{gather*}
    \{aU : a \in K^\times\} \text{ is a neighborhood basis for } \tau \\
    \{aR : a \in K^\times\} \text{ is a neighborhood basis for } \tau_R.  \qedhere
  \end{gather*}
\end{proof}

\begin{corollary} \label{cor:r-to-f}
  Let $\tau$ be a definable ring topology on an NIP field
  $(K,+,\cdot,\ldots)$, possibly with extra structure.  Then $\tau$ is
  a field topology (division is continuous).
\end{corollary}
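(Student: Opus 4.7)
The plan is to deduce the corollary directly from Theorem~\ref{main-thm2} together with Corollary~\ref{new-cor}. The only subtle point is justifying a reduction to the sufficiently saturated case, since Theorem~\ref{main-thm2} requires that hypothesis while the statement of the corollary does not.

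First I would observe that the property ``$\tau$ is a field topology'' is expressible by a first-order sentence in $(K,+,\cdot,\ldots)$. Indeed, since $\tau$ is definable, we may fix a formula $\varphi(x,\bar y)$ such that the sets $\{x : \varphi(x,\bar b)\}$ form a basis of opens. Continuity of inversion then translates into the single first-order assertion
\[
\forall a\, \forall \bar d\,\Bigl(a\ne 0 \wedge \varphi(a^{-1},\bar d)\;\to\;\exists \bar c\,\bigl(\varphi(a,\bar c)\wedge \forall x\,(\varphi(x,\bar c)\wedge x\ne 0 \to \varphi(x^{-1},\bar d))\bigr)\Bigr).
\]
Since NIP and the formulas defining $\tau$ are preserved under elementary extension, it is enough to prove the corollary after replacing $K$ with a sufficiently saturated elementary extension $(K',+,\cdot,\ldots)\succeq(K,+,\cdot,\ldots)$, equipped with the topology $\tau'$ defined by the same formulas.

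Next I would apply Theorem~\ref{main-thm2} to $K'$ and $\tau'$, obtaining an externally definable proper subring $R\subsetneq K'$ with $\Frac(R)=K'$ and $\tau'=\tau_R$. Because $R$ is externally definable, Shelah's theorem on externally definable sets (cited in the introduction as \cite[Corollary~3.24]{NIPguide}) guarantees that the expansion $(K',+,\cdot,\ldots,R)$ is still NIP. In particular $R$ is an NIP integral domain that is not a field. Corollary~\ref{new-cor} then applies: $\tau_R$ is a field topology, hence so is $\tau'$, hence so is $\tau$.

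There is no real obstacle beyond bookkeeping---all the substantive content is already packaged in Theorem~\ref{main-thm2} (which produces the ring $R$) and in Corollary~\ref{new-cor} (which, through Lemma~\ref{simon-sorta} on the Jacobson radical of an NIP integral domain, upgrades $\tau_R$ from a ring topology to a field topology). The one point that warrants care is the elementarity check, since the existence of an externally definable $R$ inducing $\tau$ is a feature of the saturated model, not of the original $K$, and it is the first-order expressibility of ``$\tau$ is a field topology'' that allows us to descend the conclusion back down.
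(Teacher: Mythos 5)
Your proposal is correct and follows essentially the same route as the paper: pass to a saturated elementary extension, apply Theorem~\ref{main-thm2} to obtain the externally definable subring $R$, and then invoke Corollary~\ref{new-cor}. The only difference is that you spell out the first-order expressibility of ``division is continuous,'' which the paper leaves implicit when it says one may pass to an elementary extension.
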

\begin{proof}
  Passing to an elementary extension, we may assume that $K$ is highly
  saturated.  Then Theorem~\ref{main-thm2} gives an externally
  definable proper subring $R \subsetneq K$ with $K = \Frac(R)$ and
  $\tau = \tau_R$.  The ring $R$ is NIP, so $\tau_R$ is a field
  topology by Corollary~\ref{new-cor}.
\end{proof}

\section{Finite breadth} \label{sec-fin-b}
Fix $n \ge 1$.  We recall some definitions from
\cite[Sections~2--3]{prdf5}.  An integral domain $R$ is a
\emph{$W_n$-domain} if for any $a_0, a_1, \ldots, a_n \in R$, there is
some $i$ such that
\begin{equation*}
  a_i \in a_0 \cdot R + a_1 \cdot R + \cdots + \widehat{a_i \cdot R} + \cdots + a_n \cdot R.
\end{equation*}
where the $\widehat{\text{hat}}$ denotes omission.  The \emph{breadth} (or \emph{weight})
of a domain $R$, written $\br(R)$, is the minimum $n$ such that $R$ is
a $W_n$-domain, or $\infty$ if no such $n$ exists.

A \emph{$W_n$-set} on a
field $K$ is a set $X \subseteq K$ such that for any $a_0, a_1,
\ldots, a_n \in K$, there is some $i$ such that
\begin{equation*}
  a_i \in a_0 \cdot X + \cdots + \widehat{a_i \cdot X} + \cdots + a_n \cdot X.
\end{equation*}
We say that a ring topology $\tau$ on $K$ is a \emph{$W_n$-topology}
if $\tau$ is locally bounded, and there is a bounded $W_n$-set $B
\subseteq K$.  The \emph{breadth} (or \emph{weight}) of a ring
topology $\tau$ is the minimum $n$ such that $\tau$ is a
$W_n$-topology, or $\infty$ if no such $n$ exists.    Thus $\tau$ is a $W_n$-topology iff $\br(\tau) \le n$.  Recall the
following:
\begin{fact} \phantomsection \label{some-facts}
  \begin{enumerate}
\item The class of $W_n$-topologies is a local class
  \cite[Remark~3.4]{prdf5}, meaning it is defined by a local sentence
  in the sense of Prestel and Ziegler \cite[\S1]{PZ}.
\item If $K = \Frac(R) \supsetneq R$, and $R$ is a $W_n$-domain, then
  $R$ is a $W_n$-set on $K$ and the induced topology $\tau_R$ is therefore a
  $W_n$-topology \cite[Proposition~3.6]{prdf5}.
\item Up to local equivalence in the sense of Prestel and Ziegler \cite[p.~320]{PZ}, all $W_n$-topological fields arise via
  this construction \cite[Corollary~3.8(1)]{prdf5}.
\item Any $W_n$-topology is a field topology
  \cite[Corollary~3.8(2)]{prdf5}.
\item $W_1$-domains are the same thing as valuation rings.
  $W_1$-topologies are the same thing as V-topologies
  \cite[Proposition~2.6, Corollary~3.8(4)]{prdf5}.
  \end{enumerate}
\end{fact}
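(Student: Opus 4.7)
My plan is to handle the five items separately since they are related but logically distinct, starting with the algebraic items (2) and (5), then the topological item (4) and the linguistic item (1), and finally the structural representation in item (3), which is where the real content lies.

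For item (5), a $W_1$-domain satisfies: for all $a_0, a_1 \in R$, either $a_0 \in a_1 R$ or $a_1 \in a_0 R$, which is exactly the standard defining property of a valuation ring. The topological half then reduces to the Kowalsky--Dürbaum--Fleischer identification of V-topologies with topologies induced by valuations or absolute values; for an absolute value, the unit ball is a bounded $W_1$-set, and for a valuation the valuation ring is. For item (2), given $a_0, \ldots, a_n \in K = \Frac(R)$, clear denominators to write $a_i = b_i/d$ with $b_i, d \in R$, apply the $W_n$-domain property to $b_0, \ldots, b_n$ to obtain $b_i = \sum_{j \ne i} b_j r_j$ for some $r_j \in R$, and divide through by $d$ to get $a_i \in \sum_{j \ne i} a_j R$. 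Thus $R$ is a $W_n$-set on $K$, and since $R$ is a bounded neighborhood of $0$ in $\tau_R$ by Fact~\ref{r-adic}(3), $\tau_R$ is a $W_n$-topology.

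For item (4), I would show the map $x \mapsto x^{-1}$ is continuous at $1$, from which division continuity follows by homogeneity as in Fact~\ref{j-fact}. Given a bounded $W_n$-set $B$, specialize the $W_n$-condition to inputs like $1, x, x, \ldots, x$ with $x$ in a small neighborhood of $0$; one of the $n+1$ alternatives forces $1 \in \sum x \cdot B$ or puts $x$ into a shrinking bounded region. A finite case analysis using Fact~\ref{pz-bdd}(4) to recombine bounded sets then exhibits $x^{-1}$ inside a bounded translate, giving the required control. For item (1), in the Prestel--Ziegler local language with quantifiers over neighborhoods of $0$ and over bounded sets, the definition of $W_n$-topology translates directly to the local sentence ``there exists a bounded neighborhood $B$ such that for all $a_0, \ldots, a_n$, some $a_i$ lies in $a_0 B + \cdots + \widehat{a_i B} + \cdots + a_n B$.'' The only bookkeeping is to check that the inner quantifiers over $a_i \in K$ and the outer quantifier over $B$ match the syntactic shape required by \cite[\S 1]{PZ}.

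The main obstacle is item (3). Here, given an arbitrary $W_n$-topological field $(K, \tau)$ with bounded $W_n$-set $B$, I want to build a $W_n$-domain $R \subseteq K$ whose $R$-adic topology is locally equivalent to $\tau$. The construction parallels the proof of Theorem~\ref{main-thm2}: fix a small elementary substructure $K_0$ and let $R$ be the directed union of all sets of the form $cB$ with $c \in K_0^\times$, so that $R$ is a $K_0$-subalgebra of $K$ and $\Frac(R) = K$. The subtle step—and the one I expect to be hardest—is verifying that $R$ itself, and not just $K$ with respect to $B$, satisfies the $W_n$-property, since the witnessing coefficients produced by the $W_n$-set condition on $K$ a priori live in $B$ rather than $R$; some additional rescaling is required to keep them inside $R$. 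Once that is done, local equivalence of $\tau_R$ and $\tau$ follows because $B \subseteq R \subseteq c B$ for some $c$, so the two neighborhood bases $\{aB\}$ and $\{aR\}$ are cofinal in each other.
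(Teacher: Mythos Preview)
The paper does not prove Fact~\ref{some-facts}; it is stated purely as a list of citations to \cite{prdf5}, with no argument supplied. So there is no in-paper proof to compare your proposal against. That said, here are comments on the sketches themselves.

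Your arguments for items (2) and (5) are correct and standard. For item (3), the step you flag as ``subtle'' is in fact trivial: once $R$ is a ring containing the $W_n$-set $B$ (which it does, since $1\cdot B \subseteq R$), the $W_n$-domain property is immediate, because for $a_0,\ldots,a_n \in R$ the $W_n$-set condition already gives $a_i \in \sum_{j\ne i} a_j B \subseteq \sum_{j\ne i} a_j R$. What you are glossing over is the preliminary passage to a sufficiently saturated elementary extension so that the directed union $\bigcup_{c \in K_0^\times} cB$ is genuinely a ring and is sandwiched as $B \subseteq R \subseteq cB$ for some $c$; without saturation neither closure under the ring operations nor the upper sandwich bound is automatic.

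For item (4), your sketch has a real gap. Specializing the $W_n$-condition to $(1,x,\ldots,x)$ with $x$ near $0$ yields either $1 \in n\cdot xB$, which says $x^{-1}$ is bounded (information about inversion near $0$, not near $1$), or $x \in B + (n-1)xB$, which places no visible constraint on $x^{-1}$ or on $(1+x)^{-1}$. The ``finite case analysis'' you allude to does not close this; you need a different specialization, or an argument via the Jacobson radical of the associated $W_n$-domain (which, combined with Fact~\ref{j-fact}, is closer to how this is actually done). As written, item (4) is not established.
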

The following is \cite[Lemma~5.6]{noetherian}.
\begin{fact} \label{from-joh23}
	If $R$ is a ring and $R/\mm$ is infinite for every maximal ideal $\mm \lhd R$, then $\br(R) \le \dpr(R)$.
\end{fact}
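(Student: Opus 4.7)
The plan is to establish the contrapositive: if $\dpr(R) \le n$, then $\br(R) \le n$. Equivalently, supposing $\br(R) > n$, so that there exist $a_0, \ldots, a_n \in R$ with $a_i \notin I_i := \sum_{j \ne i} a_j R$ for each $i$, the goal is to construct an ict-pattern of depth $n+1$, forcing $\dpr(R) \ge n+1$. The hypothesis that every residue field is infinite will enter only to supply enough elements in certain quotients.

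For each $i$, consider the ideal quotient $J_i := (I_i : a_i) = \{r \in R : a_i r \in I_i\}$. This is a proper ideal of $R$, since $1 \in J_i$ would contradict $a_i \notin I_i$. Being proper, $J_i$ lies in some maximal ideal $\mm$ of $R$, whose residue field is infinite by hypothesis; hence $R/J_i$ is also infinite. I would then choose elements $c_{i,0}, c_{i,1}, \ldots \in R$ in pairwise distinct cosets of $J_i$, equivalently with $a_i(c_{i,j} - c_{i,k}) \notin I_i$ for $j \ne k$. A standard Ramsey/compactness extraction next produces, for $i = 0, \ldots, n$ simultaneously, mutually indiscernible sequences $(c_{i,j})_{j < \omega}$ over $\{a_0, \ldots, a_n\}$ that still satisfy $a_i(c_{i,j} - c_{i,k}) \notin I_i$ for $j \ne k$ (a first-order binary condition on the pair, hence preserved under EM-type extraction).

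The ict-pattern uses the formulas
\begin{equation*}
  \varphi_i(x;y) \ := \ \exists r_0, \ldots, \widehat{r_i}, \ldots, r_n \in R \; \bigl(x - a_i y = \sum_{j \ne i} a_j r_j\bigr),
\end{equation*}
which assert $x - a_i y \in I_i$, together with the sequences $(c_{i,j})_j$. For any $\eta : \{0,\ldots,n\} \to \omega$, the element $x_\eta := \sum_{i} a_i c_{i,\eta(i)}$ witnesses the pattern: on the one hand $x_\eta - a_i c_{i,\eta(i)} = \sum_{j \ne i} a_j c_{j,\eta(j)} \in I_i$, so $\varphi_i(x_\eta, c_{i,\eta(i)})$ holds; on the other hand, for $k \ne \eta(i)$ we have $x_\eta - a_i c_{i,k} \equiv a_i(c_{i,\eta(i)} - c_{i,k}) \pmod{I_i}$, which is nonzero modulo $I_i$ by our choice of the $c_{i,\cdot}$, so $\neg\varphi_i(x_\eta, c_{i,k})$. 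This is precisely an ict-pattern of depth $n+1$, contradicting $\dpr(R) \le n$ via the standard ict characterization of dp-rank.

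The most delicate step is the simultaneous extraction of mutually indiscernible sequences while preserving distinctness modulo $J_i$, but this is not really an obstacle: distinctness is first-order in the pair $(c_{i,j}, c_{i,k})$ (over the parameters $a_0, \ldots, a_n$), so any sequence with the correct EM-type retains it. The genuine content of the proof is the choice of $\varphi_i$ together with the combinatorial identity $x_\eta - a_i c_{i,k} \equiv a_i(c_{i,\eta(i)} - c_{i,k}) \pmod{I_i}$, which cleanly isolates the ``diagonal'' contribution of the $i$-th sequence from the others and thereby converts the non-redundancy of the $a_i$'s directly into an ict-pattern.
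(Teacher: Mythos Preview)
Your argument is correct, and in fact the paper does not prove this statement at all: it is quoted as a fact from \cite[Lemma~5.6]{noetherian} and used as a black box. So there is nothing to compare against in the present paper.

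That said, one remark on your write-up: the extraction of mutually indiscernible sequences is unnecessary. The ict-pattern characterization of dp-rank requires only that for each function $\eta$ there exist a realization $x_\eta$ with $\varphi_i(x_\eta, c_{i,j}) \iff j = \eta(i)$; no indiscernibility of the parameter sequences is needed. Your explicit witness $x_\eta = \sum_i a_i c_{i,\eta(i)}$ already gives this directly once the $c_{i,j}$ are chosen in distinct $J_i$-cosets. Dropping that paragraph makes the proof cleaner and avoids the (harmless, but distracting) question of whether the extraction might move you to an elementary extension.
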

\begin{lemma}\label{rank}
  Let $R$ be an integral domain with $\dpr(R) = n < \omega$.  Suppose
  that $R$ is a $K_0$-algebra for some infinite subfield $K_0
  \subseteq R$.  Then $\br(R) \le n$.
\end{lemma}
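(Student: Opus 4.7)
The plan is to reduce directly to Fact~\ref{from-joh23}, which already does all the heavy lifting: it gives $\br(R) \le \dpr(R)$ whenever every residue field of $R$ (at a maximal ideal) is infinite. So I only need to verify that hypothesis from the assumption that $R$ is a $K_0$-algebra for an infinite field $K_0$.

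First, I would fix an arbitrary maximal ideal $\mm \lhd R$ and consider the composition $K_0 \hookrightarrow R \twoheadrightarrow R/\mm$. Since $K_0$ is a field, the kernel of this ring homomorphism is either $(0)$ or all of $K_0$; the latter is impossible because $1 \in K_0$ maps to $1 \ne 0$ in the nonzero ring $R/\mm$. Hence $K_0$ embeds into $R/\mm$, so $R/\mm$ is infinite.

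Having verified the hypothesis for every maximal ideal $\mm$, I would then invoke Fact~\ref{from-joh23} to conclude $\br(R) \le \dpr(R) = n$.

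There is no real obstacle here; the lemma is essentially a packaging of Fact~\ref{from-joh23} together with the trivial observation that a ring containing an infinite field has infinite residue fields. The value of stating it separately is presumably that in later applications one will know $R$ is a $K_0$-algebra (as arranged in Theorem~\ref{main-thm2}) rather than knowing a priori that each residue field is infinite.
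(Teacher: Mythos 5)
Your proof is correct and follows essentially the same route as the paper: both reduce to Fact~\ref{from-joh23} after observing that each residue field $R/\mm$ contains (a copy of) $K_0$ and is therefore infinite. The paper phrases this as ``$R/\mm$ is a non-trivial $K_0$-vector space,'' while you argue via injectivity of $K_0 \to R/\mm$; these are the same observation.
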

\begin{proof}
	If $\mm$ is a maximal ideal, then $R/\mm$ is a non-trivial $K_0$-vector space, and is therefore infinite.  Then Fact~\ref{from-joh23} applies.
\end{proof}
\begin{theorem} \label{wn-thm}
  Let $K$ be a field, possibly with extra structure.  Suppose $K$ has
  finite dp-rank $\dpr(K) = n < \omega$.  Let $\tau$ be a definable
  field topology on $K$.  Then $\tau$ is a $W_n$-topology.
\end{theorem}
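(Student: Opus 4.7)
The plan is to combine Theorem~\ref{main-thm2} with Lemma~\ref{rank} and Fact~\ref{some-facts}. The whole argument reduces to producing an externally definable $W_n$-domain inducing $\tau$, and then invoking the general fact that $W_n$-domains induce $W_n$-topologies.

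First I would reduce to the case where $K$ is sufficiently saturated. By Fact~\ref{some-facts}(1), being a $W_n$-topology is a local property in the sense of Prestel and Ziegler, hence preserved under elementary equivalence of topological fields. So passing $K$ together with the formulas defining $\tau$ to a highly saturated elementary extension does not affect whether the conclusion holds. Then I would apply Theorem~\ref{main-thm2} to obtain an externally definable proper subring $R \subsetneq K$ with $\Frac(R) = K$ and $\tau = \tau_R$, which is additionally a $K_0$-algebra for some small elementary substructure $K_0 \preceq K$. Note that $K_0$ is automatically infinite, since $K$ admits a ring topology and hence is infinite.

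Next I would verify the hypotheses of Lemma~\ref{rank} for $R$. The ring $R$ is an integral domain (as a subring of the field $K$), it contains the infinite subfield $K_0$, and it remains to check that $\dpr(R) \le n$. For this, note that by Shelah's theorem the expansion $(K,+,\cdot,\ldots,R)$ is NIP, and any ict-pattern in $R$ (in the pure ring language) lifts to an ict-pattern in this expansion; since dp-rank does not increase under the Shelah expansion and is monotone in definable subsets, we get $\dpr(R) \le \dpr(K) = n$. Lemma~\ref{rank} then gives $\br(R) \le n$, i.e., $R$ is a $W_n$-domain. Finally, Fact~\ref{some-facts}(2) converts this into the statement that $\tau_R = \tau$ is a $W_n$-topology on $K$, which is what we want.

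The one step I would expect to need a little care is the bound $\dpr(R) \le n$: although $R$ is only externally definable, dp-rank is well-behaved under Shelah expansions, so this comes out cleanly. Everything else is bookkeeping: Theorem~\ref{main-thm2} does the heavy lifting of producing $R$, Lemma~\ref{rank} does the algebraic work connecting dp-rank to breadth, and Fact~\ref{some-facts}(1) handles the transfer back from the saturated model to the original $K$.
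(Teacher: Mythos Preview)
Your proposal is correct and follows essentially the same route as the paper: reduce to a saturated model via Fact~\ref{some-facts}(1), apply Theorem~\ref{main-thm2} to get the externally definable $K_0$-algebra $R$, bound $\dpr(R)$ by $n$ using preservation of dp-rank under the Shelah expansion (the paper cites \cite[Fact~3.1]{dEHJ} for this), and then invoke Lemma~\ref{rank} and Fact~\ref{some-facts}(2).
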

\begin{proof}
  Let $L$ be a highly saturated elementary extension of $K$, and let
  $\tau_L$ be the canonical extension of $\tau$ to $K$.  Then
  $(L,\tau_L)$ and $(K,\tau)$ are locally equivalent in the sense of
  Prestel and Ziegler \cite{PZ}.  The definition of ``$W_n$-topology'' is a
  local sentence, so it suffices to show that $\tau_L$ is a
  $W_n$-topology.  Replacing $K$ with $L$, we may assume $K$ is highly
  saturated.

  Then Theorem~\ref{main-thm2} shows that $\tau$ is the topology
  $\tau_R$ induced by some externally definable subring $R$.
  Moreover, we can take $R$ to be a $K_0$-algebra for some small
  elementary substructure $K_0$.  The expansion $(K,R)$ has the same
  dp-rank as $K$ \cite[Fact~3.1]{dEHJ}, so its dp-rank is at most $n$.  The
  fields $K$ and $K_0$ must be infinite, since $K$ admits a field
  topology.  Then Lemma~\ref{rank} shows that $R$ is a $W_n$-domain.
  By Fact~\ref{some-facts}(2), $\tau = \tau_R$ is a $W_n$-topology.
\end{proof}

Most of the obvious examples of definable topologies on NIP fields are
V-topologies, i.e., $W_1$-topologies.  Below, we give an example from
\cite{prdf4} of a definable topology of breadth 2.
\begin{example} \label{non-V-example}
  Let $T_0$ be the theory of structures $(K,v,\partial)$, where $(K,v)
  \models \ACVF_{0,0}$, and $\partial : K \to K$ is a derivation.  By
  \cite{Michaux,GuzyPoint}, $T_0$ has a model companion $T$, and $T$ is NIP.  If
  $(K,v,\partial) \models T$, then there is a definable field topology
  $\tau_{\partial v}$ on $K$ characterized by the fact that sets of
  the form
  \begin{equation*}
    B_{a,b,\gamma} = \{x \in K : v(x-a) > \gamma \text{ and }
    v(\partial x - b) > \gamma\}
  \end{equation*}
  are a basis of open sets, and $\tau_{\partial v}$ is \emph{not} a
  V-topology \cite[Sections~8.4--8.5]{prdf4}.

  Let $R$ be the subring $R = \{x \in K : v(x) \ge 0 \text{ and }
  v(\partial x) \ge 0\}$.  Then $\tau_{\partial v}$ is the $R$-adic
  topology \cite[Definition~8.16, Proposition~8.21]{prdf4}, and $R$
  has breadth $\le 2$ by the proof of \cite[Lemma~5.8]{mana}.  It
  follows that $\tau_{\partial v}$ has breadth at most 2.  Since
  $\tau_{\partial v}$ is \emph{not} a V-topology, the breadth $\br(\tau_{\partial
    v})$ is exactly 2.

  This example has infinite dp-rank---it is not even strongly
  dependent, because of the ict-pattern
  \begin{equation*}
    \begin{pmatrix}
      0 < v(x) < 1 & 1 < v(x) < 2 & 2 < v(x) < 3 & \cdots \\
      0 < v(\partial x) < 1 & 1 < v(\partial x) < 2 & 2 < v(\partial x) < 3 & \cdots \\
      0 < v(\partial^2 x) < 1 & 1 < v(\partial^2 x) < 2 & 2 < v(\partial^2 x) < 3 & \cdots \\
      \vdots & \vdots & \vdots & \ddots 
    \end{pmatrix}
  \end{equation*}
  Nevertheless, in \cite[Theorem~10.1]{prdf4}, it is shown that the
  reduct $(K,R)$ has dp-rank 2.  This reduct defines $\tau_{\partial
    v} = \tau_R$.  This gives an example of a dp-rank 2 expansion of a
  field defining a topology of breadth $2$.  More generally, we expect
  there to be dp-rank $n$ structures $(K,+,\cdot,\ldots)$ with
  definable topologies of breadth $n$.
\end{example}

\section{Generalized t-henselianity} \label{gth-sec}
Prestel and Ziegler say that a topological field is
\emph{topologically henselian} or \emph{t-henselian} if it is locally
equivalent to a topological field $(K,\tau)$ where $\tau$ is induced
by a henselian valuation ring on $K$ \cite[Section~7]{PZ}.  Three
natural examples of t-henselian topological fields are (1) real closed
fields with the order topology, (2) $\Cc$ with the analytic topology,
and (3) henselian valued fields with the valuation topology
\cite[Corollary~7.3]{PZ}.

Dittmann, Walsberg, and Ye introduce the following generalization of
t-henselianity: \cite[Definition~8.1]{hensquot2}
\begin{definition}\label{original-def}
  A topological field $(K,\tau)$ is \emph{generalized t-henselian} (or
  \emph{gt-henselian}) if the following holds: for any neighborhood
  $U$ of $-1$ and any $n \ge 2$, there is a neighborhood $V$ of 0 such
  that if $c_0, c_1, \ldots, c_{n-2} \in V$, then the polynomial $X^n
  + X^{n-1} + c_{n-2}X^{n-2} + c_{n-1}X^{n-1} + \cdots + c_1X + c_0$
  has a root in $U$.
\end{definition}
The next proposition gives some alternate characterizations of
gt-henselianity that are more natural:
\begin{proposition} \label{understand}
  Let $(K,\tau)$ be a topological field.  The following are
  equivalent:
  \begin{enumerate}
  \item $\tau$ is gt-henselian.
  \item \label{etale} If $f : V \to W$ is an etale morphism of varieties over $K$,
    then the map $V(K) \to W(K)$ is a $\tau$-local homeomorphism.
  \item The polynomial inverse function theorem holds: if we have a
    polynomial map
    \begin{align*}
      \bar{f} : K^n &\to K^n \\
      (x_1,\ldots,x_n) &\mapsto (f_1(\bx),f_2(\bx),\ldots,f_n(\bx))
    \end{align*}
    whose Jacobian matrix $\{\partial f_i / \partial x_j\}_{i,j \le
      n}$ is invertible at a point $\ba$, then the map $\bar{f}$ is a
    $\tau$-local homeomorphism at $\ba$.
  \item The polynomial implicit function theorem holds: if we have a
    polynomial map
    \begin{align*}
      \bar{f} : K^n \times K^m &\to K^m \\
      (x_1,\ldots,x_n;y_1,\ldots,y_m) &\mapsto (f_1(\bx,\by),\ldots,f_m(\bx,\by)),
    \end{align*}
    and the Jacobian matrix $\{\partial f_i/\partial y_j\}_{i,j \le m}$ is invertible at a point $(\ba,\bb) \in K^n \times K^m$, then there are $\tau$-open neighborhoods $U \ni \ba$ and $V \ni \bb$ and a $\tau$-continuous function $g : U \to V$ such that
    \begin{equation*}
      f(\ba,\bb) = \bar{0} \iff g(\ba) = \bb, \text{ for any $\ba \in U$ and $\bb \in V$.}
    \end{equation*}
  \end{enumerate}
\end{proposition}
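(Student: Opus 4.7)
The plan is to establish the four-way equivalence through the implications $(4) \Rightarrow (3) \Rightarrow (1)$ combined with $(2) \Leftrightarrow (3)$ and the hard direction $(1) \Rightarrow (4)$. The equivalences $(3) \Leftrightarrow (4)$, $(2) \Leftrightarrow (3)$, and the implication $(4) \Rightarrow (1)$ are standard algebraic manipulations; only $(1) \Rightarrow (4)$ requires real work.

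For $(3) \Leftrightarrow (4)$: given $\bar f(\bx, \by)$ with invertible $\partial f_i / \partial y_j$ at $(\ba, \bb)$, I would apply $(3)$ to the augmented polynomial map $F(\bx, \by) = (\bx, \bar f(\bx, \by))$, whose Jacobian at $(\ba, \bb)$ is block triangular with invertible diagonal blocks $I$ and $\partial f_i/\partial y_j$; the local inverse of $F$ immediately supplies the implicit function. Conversely, $(4)$ implies $(3)$ by applying it to $F(\bx, \by) = \bar f(\by) - \bx$ at $(\bar f(\ba), \ba)$, which recovers the local inverse of $\bar f$. For $(2) \Leftrightarrow (3)$: invoke the local structure of etale morphisms---any etale morphism is Zariski-locally presentable as a polynomial system $\Spec B[\by]/(f_1,\ldots,f_m) \to \Spec B$ with invertible Jacobian matrix---which translates $(2)$ directly into the polynomial implicit function theorem and conversely shows that polynomial maps with invertible Jacobian at a point are etale there.

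The implication $(4) \Rightarrow (1)$ is a direct specialization: apply $(4)$ to the polynomial $p(\bc, X) = X^n + X^{n-1} + c_{n-2}X^{n-2} + \cdots + c_0$ at $(\bar 0, -1)$. One checks $p(\bar 0, -1) = (-1)^n + (-1)^{n-1} = 0$ and $\partial p / \partial X$ at $(\bar 0, -1)$ equals $n(-1)^{n-1} + (n-1)(-1)^{n-2} = (-1)^{n-1}$, which is $\pm 1 \ne 0$ in any field. Thus $(4)$ supplies a continuous selection $g$ sending small $\bc$ to the root of $p(\bc, X)$ lying near $-1$, which is precisely the statement of gt-henselianity.

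The main obstacle is the hard direction $(1) \Rightarrow (4)$, which is essentially the content of \cite[Section~8]{hensquot2}. My strategy would be in two steps: first, upgrade the restricted Hensel statement of Definition~\ref{original-def} (which only concerns perturbations of the specific polynomial form $X^n + X^{n-1} + \text{lower order}$) to a general univariate Hensel's lemma---for any polynomial with a simple root at $a$, sufficiently small perturbations of the coefficients still have a root near $a$---via an affine change of variable that normalizes an arbitrary polynomial with a simple root into the required shape, tracking how the rescaling interacts with neighborhoods in $\tau$. Second, bootstrap this univariate Hensel into the full multivariate implicit function theorem by an inductive elimination of variables, solving for one variable at a time and propagating continuity estimates across the iteration. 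The technical heart of the argument lies in the bookkeeping of topological neighborhoods through this iteration so that the resulting implicit function is genuinely $\tau$-continuous, rather than merely defined pointwise; this is the step where the non-discreteness and Hausdorffness of $\tau$ must be deployed most carefully.
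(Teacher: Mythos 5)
Your proposal takes a genuinely different route from the paper. The paper establishes the cycle $(1)\Rightarrow(2)\Rightarrow(3)\Rightarrow(4)\Rightarrow(1)$, and the whole point of that organization is that the one implication where gt-henselianity is actually \emph{used}, namely $(1)\Rightarrow(2)$, can be discharged immediately by citing Dittmann--Walsberg--Ye \cite[Proposition~8.6(5)]{hensquot2} (an etale morphism induces an \emph{open} map on $K$-points) together with the Stacks Project fact that the diagonal of an unramified morphism is an open immersion. Everything else in the cycle is then elementary algebraic geometry or calculus. You instead isolate $(1)\Rightarrow(4)$ as the hard direction and attempt to prove it directly, by first upgrading the restricted Hensel statement of Definition~\ref{original-def} to a general univariate Hensel's lemma via affine normalization, and then bootstrapping to the multivariate statement by inductive elimination of variables. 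This is a legitimate alternative plan and is, as you correctly observe, essentially a reproduction of the content of \cite[Section~8]{hensquot2}; but in your write-up it remains a roadmap rather than a proof. The affine normalization step (your Step 1) is genuinely nontrivial bookkeeping, and the ``inductive elimination'' in Step 2 is the real crux---naive elimination does not straightforwardly propagate the topological continuity estimates, which is precisely why the cited source routes through etale morphisms and structure theory rather than raw induction. The paper's approach buys you a short proof by leaning on a citation; yours would buy self-containedness but at the cost of redoing the hardest part of \cite{hensquot2}, and as written that part is not carried out.

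Your easy directions check out: the computation for $(4)\Rightarrow(1)$ is correct (at $(\bar{0},-1)$ one has $p = 0$ and $\partial p/\partial X = (-1)^{n-1} \ne 0$, and shrinking neighborhoods lands the implicit function in the target neighborhood of $-1$), and your handling of $(3)\Leftrightarrow(4)$ via the augmented map $(\bx,\by)\mapsto(\bx,\bar{f}(\bx,\by))$ is the same standard manipulation the paper uses for $(3)\Rightarrow(4)$. For $(2)\Leftrightarrow(3)$, the paper only proves the direction $(2)\Rightarrow(3)$ (a polynomial map with invertible Jacobian is etale there) because the cycle makes the reverse direction unnecessary; your claim that the local structure theorem for etale morphisms gives $(3)\Rightarrow(2)$ as well is plausible but glosses over the need to compare the induced topology on $V(K)$ coming from an abstract affine embedding with the topology on a standard-etale chart, and in any case this extra work is avoidable if you adopt the paper's cycle.

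The one concrete recommendation: if you want $(1)\Rightarrow(4)$ without reproving \cite[Section~8]{hensquot2} from scratch, prove $(1)\Rightarrow(2)$ instead by citing \cite[Proposition~8.6(5)]{hensquot2}, note that a continuous open map is a local homeomorphism iff its diagonal is an open embedding, and appeal to the fact that etale morphisms are unramified so that $V\to V\times_W V$ is already an open immersion of schemes. Then $(2)\Rightarrow(3)\Rightarrow(4)$ is routine and the cycle closes with your correct $(4)\Rightarrow(1)$.
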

\begin{proof}
	Dittmann, Walsberg, and Ye have done all the heavy lifting in \cite{hensquot2}:
  \begin{itemize}
  \item $(1)\implies(2)$: Note that if $X, Y$ are topological spaces,
    then a continuous, open map $X \to Y$ is a local homeomorphism if
    and only if $X \to Y$ is locally injective, or equivalently, the
    diagonal $X \to X \times_Y X$ is an open embedding.  In our case,
    the map $V(K) \to W(K)$ is an open map by
    \cite[Proposition~8.6(5)]{hensquot2}, and the diagonal map $V(K)
    \to V(K) \times_{W(K)} V(K)$ is an open embedding because $V \to V
    \times_W V$ is an open embedding of schemes by
    \cite[Lemma~02GE(1)]{stacks-project}, recalling that etale
    morphisms are unramified \cite[Lemma~02GK]{stacks-project}.
  \item $(2)\implies(3)$: Polynomial maps are etale at the points
    where the Jacobian matrix is invertible.  More precisely, if $V$
    is the open subscheme of $\Aa^n$ on which the Jacobian determinant
    doesn't vanish, then $\bar{f} : V \to \Aa^n$ is etale, so $V(K)
    \to K^n$ is a local homeomorphism by the previous point.
  \item$(3)\implies(4)$: The proof is standard---one applies the
    inverse function theorem to the map $(\bx,\by) \mapsto
    (\bx,f(\bx,\by))$.  We omit the details.
  \item $(4)\implies(1)$: The proof of
    \cite[Proposition~8.3]{hensquot2} shows that gt-henselianity follows from the
    \emph{1-variable} implicit function theorem for polynomials, i.e.,
    condition (4) in the case when $m=1$.  \qedhere
  \end{itemize}
\end{proof}
Here are a few other facts from \cite{hensquot2} which help motivate gt-henselianity:
\begin{fact} \phantomsection \label{MCF}
  \begin{enumerate}
  \item A field topology $\tau$ is t-henselian if and only if it is a
  gt-henselian V-topology \cite[Proposition~8.3]{hensquot2}.
\item   Let $K$ be a field, and $R$ be a proper subring such that $K =
  \Frac(R)$ and $R$ is a henselian local ring.  Then the $R$-adic
  topology is gt-henselian. \cite[Proposition~8.5]{hensquot2}
\item If $(K,\tau)$ is gt-henselian, then $K$ is large \cite[Corollary~8.15]{hensquot2}.
\item The class of gt-henselian fields is a ``local class'' in the
  sense of \cite{PZ}, i.e., defined by a class of ``local sentences''.
  This is clear from examining Definition~\ref{original-def}, as noted
  in the proof of \cite[Lemma~8.10]{hensquot2}.
  \end{enumerate}
\end{fact}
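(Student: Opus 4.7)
The plan is to prove the four assertions of Fact~\ref{MCF} in turn, leveraging the reformulations of gt-henselianity from Proposition~\ref{understand}.

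Part (4) is essentially immediate from Definition~\ref{original-def}: the property reads ``for every basic neighborhood $U$ of $-1$ and every $n \ge 2$, there exists a basic neighborhood $V$ of $0$ such that $\varphi_n(U,V)$ holds'', where $\varphi_n$ is a first-order condition built from the field operations and membership in $U$ and $V$. This fits the Prestel--Ziegler template for a local sentence, so gt-henselian topological fields form a local class.

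For (2), I would verify the polynomial implicit function theorem (Proposition~\ref{understand}(4)) directly. After a $K$-linear change of coordinates and an overall scaling (using that $\{cR : c \in K^\times\}$ is a neighborhood basis of $0$), one may assume the reference point $(\ba,\bb)$ lies in $R^{n+m}$ and satisfies $\bar{f}(\ba,\bb) = 0$. Then for $\bx$ in a small $\tau_R$-neighborhood of $\ba$, the perturbation $\bar{f}(\bx,\bb)$ lies in a small non-zero ideal of $R$, and the multivariate Hensel's lemma in the henselian local ring $R$ produces a unique $\by$ close to $\bb$ with $\bar{f}(\bx,\by) = 0$. The uniqueness clause then translates into $\tau_R$-continuity of $\bx \mapsto \by$, giving characterization (4) of gt-henselianity.

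For (3), I would apply the \'etale characterization Proposition~\ref{understand}(2). Given a smooth curve $C/K$ with a $K$-rational point $p$, smoothness provides an \'etale morphism $U \to \Aa^1_K$ from a Zariski-open neighborhood $U \ni p$. By (2), $U(K) \to K$ is a local homeomorphism onto a $\tau$-open subset of $K$. Since $K$ admits a non-discrete Hausdorff field topology, $K$ is infinite and every nonempty $\tau$-open set is infinite, so $|C(K)| = \infty$.

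Finally for (1), the easy direction is that a t-henselian topology is by definition locally equivalent to a topology induced by a henselian valuation; that topology is visibly a V-topology and gt-henselian by part (2), and both properties transfer under local equivalence via (4) and the V-topology case of Fact~\ref{some-facts}(1),(5). Conversely, suppose $\tau$ is a gt-henselian V-topology. The Kowalsky--D\"urbaum--Fleischer theorem produces a valuation (or absolute value) $v$ inducing $\tau$, and the one-variable case of Proposition~\ref{understand}(4) applied to a polynomial $f \in \Oo_v[X]$ with $f(0) \in \mm_v$ and $f'(0) \in \Oo_v^\times$ yields a continuous root near $0$, recovering Hensel's condition for $v$. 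The main obstacle I foresee is in (2): controlling the Newton/Hensel solution uniformly in the parameter $\bx$ strongly enough to yield honest $\tau_R$-continuity rather than merely pointwise solvability, especially across the scaling step needed to reduce to $R$-valued coordinates.
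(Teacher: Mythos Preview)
The paper does not actually prove Fact~\ref{MCF}: each item is simply cited from \cite{hensquot2} or \cite{PZ}, so there is no ``paper's own proof'' to compare against.  Your proposal goes further than the paper by sketching arguments, and most of the sketches are sound.  Part~(4) is exactly what the paper asserts in the statement itself.  Your argument for part~(3) is essentially the same as the paper's later Lemma~\ref{nonisol}, so it is certainly correct.  The sketch for part~(2) is reasonable and the obstacle you flag (uniform continuity of the Hensel solution in the parameter) is genuine but standard to overcome.

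There is, however, a real gap in your converse direction for part~(1).  You propose to take the valuation $v$ on $K$ furnished by the Kowalsky--D\"urbaum--Fleischer theorem and use the implicit function theorem to verify Hensel's condition for $v$ directly.  This cannot work as stated: consider $K = \Qq$ with the $p$-adic topology.  This is a gt-henselian V-topology (by local equivalence with $\Qq_p$ and part~(4)), yet the $p$-adic valuation on $\Qq$ is \emph{not} henselian.  The implicit function theorem only produces a root ``$\tau$-close to $0$'', i.e.\ with $v(\text{root}) > \gamma$ for some $\gamma$ depending on the neighborhood; it does not force the root into $\mm_v$ for a fixed polynomial with $f(0) \in \mm_v$.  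The correct route (and the one taken in \cite{hensquot2}) is to exploit that t-henselianity is a \emph{local} property: pass to a sufficiently saturated elementary extension, where the ring of $\tau$-bounded elements becomes a genuine valuation ring, and show \emph{that} valuation is henselian.  Your sketch also leaves the archimedean absolute-value case unaddressed, though this is a minor omission by comparison.
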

The following is \cite[Conjecture~1.2]{noetherian}, modulo \cite[Proposition~3.6]{noetherian}:
\begin{conjecture}[Generalized henselianity conjecture] \label{genhencon}
  If $R$ is an NIP integral domain, then $R$ is a henselian local
  ring.
\end{conjecture}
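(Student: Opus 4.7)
The plan is to attack the conjecture through the topological reformulation~(\ref{old4.5}) in Theorem~\ref{shiliu}: no two definable field topologies on an NIP expansion of a field can be independent in the sense of Prestel and Ziegler. This reformulation is appealing because independence of topologies is genuinely combinatorial (it asserts approximation of pairs by common points) while NIP is itself combinatorial, so an obstruction should in principle be extractable. By Theorem~\ref{main-thm2}, two definable field topologies on a sufficiently saturated NIP $K$ correspond to two externally definable proper subrings $R_1, R_2 \subsetneq K$ with $\Frac(R_i) = K$, and independence of $\tau_{R_1}$ and $\tau_{R_2}$ becomes the concrete statement that for every pair of non-zero ideals $I_j \lhd R_j$ and every $a \in K$, there is $b \in I_1 \cap (a + I_2)$.

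First I would fix independent $\tau_{R_1}, \tau_{R_2}$ together with descending chains of basic neighborhoods in each, drawn from the affine basis $\{aR_i + b\}$ of Remark~\ref{aff-basis}. The independence hypothesis yields, for every finite profile of neighborhood/shift choices, a witnessing element; the goal is to assemble these witnesses into a uniformly definable array realizing an ict-pattern of arbitrary height in the expansion $(K, R_1, R_2)$, contradicting NIP via Shelah's theorem on externally definable expansions. If this topological route proves resistant, a complementary attack is to directly target the algebraic form~(\ref{old6}): starting from a hypothetical non-henselian NIP local integral domain $R$, produce an obstruction on $\Frac(R)$ (for instance an Artin--Schreier or Kummer extension of the residue field that obstructs lifting) that contradicts known NIP-theoretic constraints on NIP fields.

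The main obstacle is that independence of two topologies supplies non-empty intersections of neighborhoods but not the full shattering required to witness an ict-pattern, so the combinatorial core of the first approach lies in upgrading non-emptiness to genuine independence of a definable family -- which is where the algebraic structure of $R_1,R_2$ as \emph{subrings}, rather than abstract filter bases, must be exploited. The second approach inherits the difficulty that in characteristic $0$ with infinite dp-rank neither the Artin--Schreier closedness arguments of \cite{fpcase} nor the dimensional arguments of \cite{noetherian} apply, so a genuinely new henselianity criterion compatible with NIP would be needed. It remains plausible, as the discussion following Theorem~\ref{shiliu} suggests, that the full conjecture requires first establishing the Shelah conjecture~\ref{shelconj} and then reading off generalized henselianity from the resulting classification of NIP fields.
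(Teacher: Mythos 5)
The statement you were asked to prove is a \emph{conjecture} in the paper, not a theorem: the paper does not prove the Generalized Henselianity Conjecture, and explicitly records (Fact~\ref{fact1.7}) that it is only known when $R$ has positive characteristic or finite dp-rank. So there is no proof in the paper to compare against, and your proposal should be judged as a research plan rather than as a proof attempt. Judged that way, it is largely a faithful summary of what the paper already does: the reduction via Theorem~\ref{main-thm2} to externally definable subrings, the reformulation through non-independence of definable topologies (Theorem~\ref{topreform}), and the closing observation that the conjecture may have to pass through the Shelah conjecture all reproduce the paper's own discussion (Sections~7--9).

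The gap is therefore not an error in what you wrote but the absence of any new idea at the places you yourself flag as the crux. Your first route---assemble witnesses to topological independence into an ict-pattern for $(K,R_1,R_2)$---is essentially what the paper does in Proposition~\ref{chaotic}, but that argument crucially needs \emph{both} topologies to be gt-henselian, so that one can run an algebraic-geometry argument on curves (Lemma~\ref{approx-curves}) and use $\sqrt{\cdot}$ to manufacture the shattering. In the general NIP setting you cannot assume gt-henselianity of the $\tau_{R_i}$ without already having the GHC, so this route is circular exactly as you suspected; ``exploiting the ring structure of $R_1,R_2$'' is precisely what is not yet known how to do. Your second route (an Artin--Schreier/Kummer obstruction on the residue field) has the problem you also note: in characteristic~$0$ with infinite dp-rank there is no known NIP constraint to contradict, and indeed the paper indicates (Theorem~\ref{roundabout} and Conjectures~\ref{un1}, \ref{un2}, which it labels ``unlikely'' as stated) that even granting the Shelah conjecture, closing the argument requires further topological lemmas that are open. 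In short, your proposal accurately maps the terrain but does not, and at present cannot, supply a proof of the conjecture.
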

The GHC is known to hold when $R$ has finite dp-rank or positive
characteristic (Fact~\ref{fact1.7}).
\begin{proposition} \label{gt-hens-prop}
  Let $(K,+,\cdot,\ldots)$ be a NIP expansion of a field.  Suppose the
  GHC holds or $\characteristic(K) = p > 0$ or $\dpr(K) = n < \omega$.
  If $\tau$ is a definable field topology on $K$, then $\tau$ is
  gt-henselian and $K$ is large.
\end{proposition}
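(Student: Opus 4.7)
The plan is to use Theorem~\ref{main-thm2} to represent $\tau$ as an $R$-adic topology for some NIP integral domain $R$, show that $R$ is a henselian local ring in each of the three cases, and conclude via Fact~\ref{MCF}(2)--(3).

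First I would pass to a highly saturated elementary extension $L \succeq K$, carrying $\tau$ along to its canonical extension $\tau_L$. Then $(L,\tau_L)$ is locally equivalent to $(K,\tau)$ in the sense of Prestel and Ziegler; gt-henselianity is a local class by Fact~\ref{MCF}(4), and largeness is preserved under elementary equivalence of fields. Hence it suffices to prove the conclusion for $(L,\tau_L)$, and we may assume $K$ itself is sufficiently saturated. Theorem~\ref{main-thm2} now gives an externally definable proper subring $R \subsetneq K$ with $\Frac(R) = K$ and $\tau = \tau_R$, with $R$ containing some small elementary substructure $K_0 \preceq K$. By Shelah's theorem on externally definable sets, $(K,R)$ is NIP, so $R$ is an NIP integral domain; when $\characteristic(K) = p > 0$, the containment $\Ff_p \subseteq K_0 \subseteq R$ also makes $R$ an $\Ff_p$-algebra.

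Next I would verify in each case that $R$ is a henselian local ring. If the GHC holds, this is immediate. If $\characteristic(K) = p > 0$, Fact~\ref{fact1.7}(1) applies. If $\dpr(K) = n < \omega$, then as in the proof of Theorem~\ref{wn-thm} we have $\dpr((K,R)) = n$ via \cite[Fact~3.1]{dEHJ}, so $\dpr(R) \le n$, and Fact~\ref{fact1.7}(2) applies. With $R$ henselian local and $\Frac(R) = K \supsetneq R$, Fact~\ref{MCF}(2) forces $\tau_R = \tau$ to be gt-henselian, and Fact~\ref{MCF}(3) then gives that $K$ is large.

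The main subtlety is the initial reduction to a saturated model: we rely on the local-class property of gt-henselianity from Fact~\ref{MCF}(4) to transfer gt-henselianity of $\tau_L$ on $L$ back down to $\tau$ on $K$, since $R$ is only externally definable and its construction is intrinsically a saturated-model argument. Past this point the argument is a routine assembly of already-established machinery, the nontrivial external inputs being the henselianity results for NIP integral domains from \cite{fpcase,noetherian}.
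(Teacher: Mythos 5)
Your proposal is correct and follows essentially the same route as the paper: reduce to a saturated model, invoke Theorem~\ref{main-thm2} to realize $\tau$ as $\tau_R$ for an externally definable (hence NIP) domain $R$, note that the relevant hypotheses (GHC, positive characteristic, finite dp-rank) carry over to the expansion $(K,R)$, conclude $R$ is henselian local, and finish with Fact~\ref{MCF}(2)--(3). The paper's proof is just more terse, deferring the saturation reduction and transfer to ``as in the proof of Theorem~\ref{wn-thm},'' whereas you spell out the local-class transfer explicitly; the content is the same.
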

\begin{proof}
  Largeness follows from gt-henselianity by Fact~\ref{MCF}(3).  For
  gt-henselianity, we may assume that $K$ is highly saturated and
  $\tau$ is induced by an externally definable integral domain $R
  \subsetneq K$ with $\Frac(R) = K$, as in the proof of
  Theorem~\ref{wn-thm}.  We can expand $K$ by adding a unary predicate
  for $R$; this doesn't affect the dp-rank \cite[Fact~3.1]{dEHJ} or
  the characteristic.  Then $R$ is a henselian local ring by the GHC
  or its known cases.  Therefore $\tau$ is gt-henselian by
  Fact~\ref{MCF}(2).
\end{proof}

\section{Topological reformulation of the Generalized Henselianity Conjecture} \label{sec-reformulate}
In this section, we show that the Generalized Henselianity Conjecture
can be reformulated as a statement about definable topologies on NIP
fields (Theorem~\ref{topreform}).  We first extract a technical tool
from \cite[Section~3]{noetherian}.
\begin{definition}[{\cite[Definition~3.3]{noetherian}}]
  A \emph{problematic ring} is an integral domain $R$ with exactly two
  maximal ideals $\mm_1, \mm_2$ and with $R/\mm_i$ infinite for
  $i=1,2$.
\end{definition}
\begin{fact} \label{evil}
  If the generalized henselianity conjecture fails, then there is a
  problematic NIP ring.
\end{fact}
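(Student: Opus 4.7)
The plan is to extract a problematic NIP ring from the algebraic structure of a non-local NIP integral domain, whose existence is guaranteed if the GHC fails. First I would use the equivalence of conditions (\ref{old1})--(\ref{old3}) in Conjecture~\ref{ghcv2} to obtain an NIP integral domain $R$ that is not local. Replace $R$ with a highly saturated elementary extension, which preserves NIP and the failure of locality. By the Simon-style finiteness result used in Lemma~\ref{simon-sorta}, $R$ has only finitely many maximal ideals $\mm_1, \ldots, \mm_n$ with $n \geq 2$.

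Next, I would argue that each $\mm_i$ is definable over parameters. The Jacobson radical is definable via $J(R) = \{x \in R : 1 + xR \subseteq R^\times\}$, and the quotient $R/J(R) \cong \prod_{i=1}^n R/\mm_i$ is a finite product of fields with exactly $n$ primitive idempotents $e_1,\ldots,e_n$. Since $R/J(R)$ has only finitely many idempotents, each $e_i$ is a distinguished element, and choosing any lift $\tilde e_i \in R$ makes $\mm_i = \{x \in R : e_i \cdot \pi(x) = 0 \text{ in } R/J(R)\}$ definable over the parameter $\tilde e_i$.

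Selecting two maximal ideals $\mm_1, \mm_2$ whose residue fields are infinite (the obstacle, addressed below), I form the localization $R' = S^{-1}R$ inside $K := \Frac(R)$, where $S = R \setminus (\mm_1 \cup \mm_2)$ is multiplicatively closed as the complement of a union of primes. Then $R'$ is an integral domain with exactly two maximal ideals $\mm_1 R'$ and $\mm_2 R'$, with residue fields $R/\mm_1, R/\mm_2$ both infinite, so $R'$ is problematic. Moreover, $R'$ is parameter-definable in $(K,R)$ via
\[
x \in R' \iff \exists c \in R\colon\ c \notin \mm_1 \cup \mm_2 \ \text{and}\ cx \in R,
\]
and since $(K,R)$ is mutually interpretable with $R$ (every integral domain recovers its fraction field), it is NIP, so $R'$ is an NIP integral domain.

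The main obstacle is the selection of two maximal ideals with infinite residue fields. If some $R/\mm_i$ are infinite this is automatic; the difficulty arises when every $R/\mm_i$ is finite, so that $R/J(R)$ is a finite ring. In this case, one should replace $R$ with a finite integral extension $R^* = R[x]/(f)$, where $f$ is a monic polynomial of large degree chosen via CRT to reduce to an irreducible polynomial modulo both $\mm_1$ and $\mm_2$; monicity plus irreducibility mod $\mm_1$ forces $f$ to be irreducible over $\Frac(R)$ (Gauss's lemma for monic polynomials), so $R^*$ is an integral domain, and since $R^*$ is a finite free $R$-module it is interpretable in $R$ and hence NIP. This strictly enlarges the residue fields above $\mm_1$ and $\mm_2$. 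Controlling the NIP property through an iteration (or a suitable direct limit via saturation/compactness) of such extensions in order to reach infinite residue fields is the principal technical difficulty, and is where the construction earns its keep.
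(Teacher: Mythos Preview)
The paper's proof is not self-contained: it simply invokes the ``henselizing class'' framework of \cite[Section~3]{noetherian}, noting that NIP rings form a pre-henselizing class, and that a pre-henselizing class containing no problematic ring is a henselizing class, whence every integral domain in it is henselian local by \cite[Proposition~3.4]{noetherian}. The case analysis you are attempting is done there, not in the present paper.

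Your direct construction is correct when $R$ has at least two maximal ideals with infinite residue fields: the localization $S^{-1}R$ at $S = R \setminus (\mm_1 \cup \mm_2)$ is then a problematic domain, definable in $(K,R)$ and hence NIP. But the finite residue field case is a genuine gap, and the fix you sketch does not close it. Each single integral extension $R[x]/(f)$ is interpretable in $R$ and therefore NIP, but reaching \emph{infinite} residue fields along this route requires an infinite tower of such extensions, and NIP is not preserved under infinite directed unions of interpretations. Saturation and compactness do not rescue the argument either: the degree-$d$ extension is interpreted in $R^d$, so the family is not uniformly definable as $d \to \infty$, and there is no single formula whose realizations in a saturated model would yield the limit ring. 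You have correctly located the obstruction, but the proposal as written stops precisely at the point where the real work begins; the machinery in \cite{noetherian} is organized so that no transfinite passage is needed.
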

\begin{proof}
  As discussed in \cite{noetherian} above Definition~3.3, the class of
  NIP rings is a ``pre-henselizing class''
  \cite[Definition~3.2]{noetherian}.  If there are no problematic NIP
  rings, then the class is a ``henselizing class''
  \cite[Definition~3.3]{noetherian}, which implies that every NIP
  integral domain is a henselian local ring
  \cite[Proposition~3.4]{noetherian}, i.e., the GHC holds.
\end{proof}
Recall our convention that ``ring topologies'' and ``field
topologies'' are non-discrete and Hausdorff.
\begin{remark} \label{plus-rem}
  Let $K$ be a field and let $\tau_1$ and $\tau_2$ be two field
  topologies on $K$.  Let $\tau_1 + \tau_2$ be the topology on $K$
  generated by $\tau_1$ and $\tau_2$.
  \begin{enumerate}
  \item $\tau_1 + \tau_2$ is a field topology on $K$, or discrete.
  \item Let $\mathcal{B}_i$ be a basis for $\tau_i$, for $i = 1, 2$.
    Then $\{U \cap V : U \in \mathcal{B}_1, ~ V \in \mathcal{B}_2\}$
    is a basis for $\tau_1 + \tau_2$.
  \item If $\tau_1$ and $\tau_2$ are definable in some structure
    $(K,\ldots)$, then so is $\tau_1 + \tau_2$.
  \end{enumerate}
  The proofs are straightforward.
\end{remark}
Say that two topologies $\tau_1, \tau_2$ on $K$ are \emph{independent} if $U \cap V \ne \varnothing$ for any non-empty $\tau_1$-open set $U \subseteq K$ and non-empty $\tau_2$-open set $V \subseteq K$.
\begin{lemma} \label{plus-lem}
  Let $\tau_1$ and $\tau_2$ be independent field topologies on $K$.
  \begin{enumerate}
  \item $\tau_1 + \tau_2$ is non-discrete, and is therefore a field
    topology.
  \item $\tau_1 + \tau_2$ is not gt-henselian.
  \end{enumerate}
\end{lemma}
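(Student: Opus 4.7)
The plan for part (1) is to apply Remark~\ref{plus-rem}(1), which reduces the claim to showing that $\tau := \tau_1 + \tau_2$ is non-discrete. I would argue by contradiction: if $\tau$ is discrete, then by Remark~\ref{plus-rem}(2) some basic open $U \cap V$ with $U \in \tau_1$ and $V \in \tau_2$ is a singleton $\{x\}$. Since each $\tau_i$ is Hausdorff and non-discrete (being a field topology), the sets $U \setminus \{x\}$ and $V \setminus \{x\}$ are non-empty $\tau_i$-opens; independence would then force a point in their intersection, yet $(U \cap V) \setminus \{x\} = \varnothing$, a contradiction.

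For part (2), my plan is to assume $\tau$ is gt-henselian and derive a contradiction using the polynomial implicit function theorem (Proposition~\ref{understand}(4)) applied to $F(x, y) = y^2 - y - x$. The key feature is that $\partial F / \partial y = 2y - 1$ is nonzero at both simple roots $y = 0$ and $y = 1$ of $F(0, \cdot)$ in \emph{any} characteristic (including $\characteristic(K) = 2$), so the IFT supplies $\tau$-continuous selections $g_0, g_1$ of the two roots on $\tau$-open neighborhoods of $0$, with $g_0(0) = 0$ and $g_1(0) = 1$. Since $F(x, \cdot)$ has degree $2$, after shrinking so that the images of $g_0, g_1$ lie in disjoint $\tau$-neighborhoods of $0$ and $1$, the values $g_0(x)$ and $g_1(x)$ exhaust the roots of $F(x, \cdot)$ in $K$ for $x$ in the common domain.

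Now I stage a configuration using independence. Using Hausdorffness of $\tau_1$, pick disjoint $\tau_1$-opens $P_1 \ni 0$ and $Q_1 \ni 1$; using Hausdorffness of $\tau_2$, pick disjoint $\tau_2$-opens $Q_2 \ni 0$ and $P_2 \ni 1$. Shrink the IFT domains so that $g_0$ takes values in $Q_2$ and $g_1$ takes values in $Q_1$; then, using $\tau_1$-continuity of $y \mapsto y^2 - y$ at $0$ and $\tau_2$-continuity of the same polynomial at $1$, shrink $P_1$ and $P_2$ so that the map $\rho \mapsto \rho^2 - \rho$ sends $P_1 \cap P_2$ into the common domain of $g_0, g_1$. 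By independence, $P_1 \cap P_2 \ne \varnothing$; pick $\rho$ in it and set $x := \rho^2 - \rho$. Then $\rho$ is a root of $F(x, \cdot)$, yet $\rho \in P_1$ is disjoint from $Q_1 \ni g_1(x)$ and $\rho \in P_2$ is disjoint from $Q_2 \ni g_0(x)$, so $\rho \notin \{g_0(x), g_1(x)\}$---contradicting that these are the only two roots.

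The main technical challenge is the bookkeeping of quantifiers: the Hausdorff-separating neighborhoods $P_i, Q_i$ must be chosen first, then the IFT domains shrunk so that the $g_i$ land inside $Q_j$, and only then should $P_1, P_2$ be further shrunk via continuity of $y \mapsto y^2 - y$; independence is invoked last. Choosing $F = y^2 - y - x$ rather than $y^2 - x$ is a minor but crucial detail that keeps the Jacobian invertible at both roots uniformly across characteristics.
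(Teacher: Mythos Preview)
Your proof is correct and takes essentially the same approach as the paper: for part (2) both arguments use the polynomial $y^2 - y$ (the paper via the inverse function theorem at $y=0$ together with the symmetry $f(y)=f(1-y)$, you via the implicit function theorem at both roots $y=0,1$) and then invoke independence to produce a point $\tau_1$-close to one root and $\tau_2$-close to the other. For part (1), the paper phrases the argument through the diagonal embedding into $(K,\tau_1)\times(K,\tau_2)$ (independence makes the diagonal dense, discreteness would make it discrete, impossible in a Hausdorff space with no isolated points), but your direct argument with a singleton basic open is the same idea unpacked.
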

\begin{proof}
  \begin{enumerate}
  \item Let $\Delta : K \to K \times K$ be the diagonal embedding.
    Then $(K,\tau_1+\tau_2) \to (K,\tau_1) \times (K,\tau_2)$ is a
    topological embedding (the topology $\tau_1 + \tau_2$ is precisely
    the pullback of the product topology $\tau_1 \times \tau_2$ along
    $\Delta$).  If $\tau_1 + \tau_2$ is the discrete topology on $K$,
    then the diagonal $\Delta(K) \subseteq K \times K$ is discrete as
    a subset of $(K,\tau_1) \times (K,\tau_2)$.  On the other hand,
    the independence of $\tau_1$ and $\tau_2$ means precisely that
    $\Delta(K)$ is dense in $(K,\tau_1) \times (K,\tau_2)$.  The
    product space $(K,\tau_1) \times (K,\tau_2)$ is a non-empty
    Hausdorff space with no isolated points, so no discrete set is
    dense.
  \item Let $\tau_3 = \tau_1 + \tau_2$.  Suppose for the sake of
    contradiction that $\tau_3$ is gt-henselian.  Let $f(x) = x^2 -
    x$.  Then $f'(0) = -1 \ne 0$, so $f$ is a $\tau_3$-local
    homeomorphism at $0$ by Proposition~\ref{understand}(3).

    For $i=1,2$, take $U_i$ a $\tau_i$-neighborhood of 0, small enough
    that $U_i \cap (1-U_i) = \varnothing$.  Then $U_1 \cap U_2$ is a
    $\tau_3$-neighborhood of 0.  Since $f$ is a $\tau_3$-local
    homeomorphism at 0, the image $f(U_1 \cap U_2)$ is a
    $\tau_3$-neighborhood of $f(0) = 0$.  Thus there are
    $\tau_i$-neighborhoods $V_i \ni 0$ for $i = 1, 2$ such that $f(U_1
    \cap U_2) \supseteq V_1 \cap V_2$.  By $\tau_i$-continuity of $f$,
    there are $\tau_i$-neighborhoods $U'_i \subseteq U_i$ such that
    $f(U'_i) \subseteq V_i$.  Note that
    \begin{gather*}
      f(x) = f(1-x) \\
      f(x)=f(y) \iff x \in \{y,1-y\}.
    \end{gather*}
    By independence of $\tau_1$ and $\tau_2$, there is some $a \in
    (1 - U'_1) \cap U'_2$.  Then $f(a) = f(1-a) \in f(U'_1) \subseteq V_1$, and
    $f(a) \in f(U'_2) \subseteq V_2$.  Thus $f(a) \in V_1
    \cap V_2 \subseteq f(U_1 \cap U_2)$.  Take $b \in U_1 \cap U_2$
    with $f(b) = f(a)$.  Then $b = a$ or $b = 1-a$.  The first case
    cannot happen because $b \in U_1$ and $a \in 1 - U_1$.  The
    second case cannot happen because $b \in U_2$ and $1-a \in 1 -
    U_2$.  \qedhere
  \end{enumerate}
\end{proof}

\begin{corollary} \label{redundant-cor}
  Let $K$ be a NIP field, possibly with extra structure.  Let $\tau_1,
  \tau_2$ be two definable topologies on $K$.  If $\characteristic(K)
  = p > 0$ or $\dpr(K) = n < \omega$ or the GHC holds, then $\tau_1$
  and $\tau_2$ are \emph{not} independent.
\end{corollary}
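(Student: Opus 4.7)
The plan is straightforward: combine the machinery already assembled in Section~\ref{sec-reformulate} with Proposition~\ref{gt-hens-prop}, using the topology $\tau_1 + \tau_2$ generated by $\tau_1$ and $\tau_2$ to derive a contradiction from independence.

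Suppose for contradiction that $\tau_1$ and $\tau_2$ are independent. Form the join $\tau_3 = \tau_1 + \tau_2$. First I would observe that $\tau_3$ is a field topology by Lemma~\ref{plus-lem}(1), and that $\tau_3$ is definable in the structure $(K,+,\cdot,\ldots)$ by Remark~\ref{plus-rem}(3) (the basis described in Remark~\ref{plus-rem}(2) is uniformly definable, since intersections of two uniformly definable families are uniformly definable). Thus $\tau_3$ is a definable field topology on an NIP expansion of a field.

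Next, I would invoke Proposition~\ref{gt-hens-prop}: under any of the three hypotheses (the GHC holds, $\characteristic(K) = p > 0$, or $\dpr(K) < \omega$), every definable field topology on $(K,+,\cdot,\ldots)$ is gt-henselian. Applying this to $\tau_3$, we conclude that $\tau_3$ is gt-henselian.

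On the other hand, Lemma~\ref{plus-lem}(2) tells us that the sum of two independent field topologies is never gt-henselian, so $\tau_3$ is not gt-henselian, a contradiction. The only step with any content beyond bookkeeping is the passage to $\tau_3$, and this is essentially handed to us by Remark~\ref{plus-rem} and Lemma~\ref{plus-lem}; there is no genuine obstacle. One minor point to check is that the hypotheses of Proposition~\ref{gt-hens-prop} are preserved under passing from $(K,+,\cdot,\ldots)$ to itself (i.e., we aren't adding any new structure), which is trivial. If one were worried that definability of $\tau_i$ might require parameters from different sets, this is harmless since the proposition allows arbitrary definable field topologies on NIP expansions.
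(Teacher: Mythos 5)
Your proof is correct and matches the paper's own argument exactly: both proofs form $\tau_3 = \tau_1 + \tau_2$, use Remark~\ref{plus-rem} and Lemma~\ref{plus-lem} to conclude $\tau_3$ is a definable non-gt-henselian field topology, and derive a contradiction from Proposition~\ref{gt-hens-prop}.
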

\begin{proof}
  Suppose not.  By Remark~\ref{plus-rem} and Lemma~\ref{plus-lem}, the
  topology $\tau_1 + \tau_2$ is a definable field topology on $K$ and
  $\tau_1 + \tau_2$ is \emph{not} gt-henselian.  This contradicts
  Proposition~\ref{gt-hens-prop}.
\end{proof}

\begin{remark} \label{remdependent}
  Let $\tau_1$ and $\tau_2$ be two field topologies on $K$.  Then
$\tau_1$ and $\tau_2$ are independent if and only if
\begin{equation*}
  (a + U) \cap (b - V) \stackrel{?}{\ne} \varnothing
\end{equation*}
for any $\tau_1$-neighborhood $U \ni 0$ and $\tau_2$-neighborhood $V
\ni 0$.  Applying an affine transformation, it suffices to consider
the case where $a = 0$ and $b = 1$:
\begin{equation*}
  U \cap (1 - V) \stackrel{?}{\ne} \varnothing.
\end{equation*}
Equivalently, it suffices to check that
\begin{equation*}
  1 \stackrel{?}{\in} U + V
\end{equation*}
for any $\tau_1$-neighborhood $U \ni 0$ and $\tau_2$-neighborhood $V
\ni 0$.  In other words,
\begin{quote}
  $\tau_1$ and $\tau_2$ are independent if and only if $1 \in U + V$
  for every $\tau_1$-neighborhood $U \ni 0$ and $\tau_2$-neighborhood
  $V \ni 0$.
\end{quote}
\end{remark}
\begin{lemma} \label{independence-trick}
  Let $A$ be an integral domain with exactly two maximal ideals $\pp$
  and $\qq$.  Suppose that $(A \setminus \pp) \cdot (A \setminus \qq)
  = A \setminus \{0\}$.  Then the two topologies on $K = \Frac(A)$
  induced by $A_\pp$ and $A_\qq$ are independent.
\end{lemma}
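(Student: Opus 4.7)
The plan is to verify independence via the criterion in Remark~\ref{remdependent}: $\tau_1$ and $\tau_2$ are independent iff $1 \in U + V$ for every $\tau_1$-neighborhood $U \ni 0$ and $\tau_2$-neighborhood $V \ni 0$. Using the neighborhood bases $\{cA_\pp : c \in K^\times\}$ and $\{dA_\qq : d \in K^\times\}$ from Fact~\ref{r-adic} and rescaling by $c$, the problem reduces to showing
\[
  A_\pp + eA_\qq = K \quad \text{for every } e \in K^\times.
\]

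I would establish two sub-claims. First, I would prove the base case $A_\pp + A_\qq = K$: given $x = a/b \in K$ with $a, b \in A \setminus \{0\}$, apply the hypothesis to factor $b = uv$ with $u \in A \setminus \pp$ and $v \in A \setminus \qq$. Since $u \notin \pp$ and $v \notin \qq$ and these are the only maximal ideals of $A$, the ideal $uA + vA$ lies in no maximal ideal and hence equals $A$; a B\'ezout identity $su + tv = 1$ yields $1/b = t/u + s/v \in A_\pp + A_\qq$, and multiplying by $a \in A \subseteq A_\pp \cap A_\qq$ gives $x \in A_\pp + A_\qq$.

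Second, I would deduce from the hypothesis the multiplicative factorization $K^\times = A_\pp^\times \cdot A_\qq^\times$: any $z \in K^\times$ equals $a/b$ with $a, b \in A \setminus \{0\}$, and factoring $a = u_1 v_1$ and $b = u_2 v_2$ as in the hypothesis gives $z = (u_1/u_2)(v_1/v_2)$ with $u_1/u_2 \in A_\pp^\times$ and $v_1/v_2 \in A_\qq^\times$. Given $e \in K^\times$, this lets me write $e = \gamma \delta$ with $\gamma \in A_\pp^\times$ and $\delta \in A_\qq^\times$, so that $eA_\qq = \gamma A_\qq$. The target equation $A_\pp + \gamma A_\qq = K$, after multiplication by $1/\gamma \in A_\pp^\times$ (which stabilizes both $A_\pp$ and $K$), becomes exactly the first sub-claim.

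The hard part will be the absorption step linking the two sub-claims. A basic $\tau_1$-neighborhood $cA_\pp$ depends on $c \in K^\times$ in a way that, a priori, cannot be reduced to $A_\pp$. The hypothesis $(A \setminus \pp) \cdot (A \setminus \qq) = A \setminus \{0\}$ is doing essential work here: it is precisely what yields the factorization $K^\times = A_\pp^\times \cdot A_\qq^\times$ needed to ``diagonalize'' each $e \in K^\times$ into units of the two local rings and collapse the general problem to the base case $A_\pp + A_\qq = K$.
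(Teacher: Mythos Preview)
Your proof is correct, and it shares the first sub-claim ($A_\pp + A_\qq = K$) verbatim with the paper, but the reduction step is genuinely different.

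The paper does not prove or use the multiplicative decomposition $K^\times = A_\pp^\times \cdot A_\qq^\times$. Instead, given $\alpha,\beta \in K^\times$, it simply picks a nonzero $\gamma \in \alpha A \cap \beta A$ (which exists because the $A$-adic topology is non-discrete, or equivalently because any two nonzero fractional ideals of a domain meet nontrivially). Then $\alpha A_\pp \supseteq \gamma A_\pp$ and $\beta A_\qq \supseteq \gamma A_\qq$, so
\[
  \alpha A_\pp + \beta A_\qq \;\supseteq\; \gamma A_\pp + \gamma A_\qq \;=\; \gamma(A_\pp + A_\qq) \;=\; \gamma K \;=\; K.
\]
This is shorter and invokes the hypothesis $(A\setminus\pp)\cdot(A\setminus\qq)=A\setminus\{0\}$ only once, in the base case. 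Your route invokes the hypothesis a second time to get the unit factorization, which is a little more work but gives you the extra structural statement $K^\times = A_\pp^\times \cdot A_\qq^\times$ for free; that fact is not needed here but is pleasant and could be useful elsewhere.
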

\begin{proof}
  First we show that $A_\pp + A_\qq = K$.  Any element of $K$ can be
  written as $a/b$ with $a \in A$ and $b \in A \setminus \{0\}$.  By
  the assumption on $A$, $b = st$ for some $s \in A \setminus \pp$ and
  $t \in A \setminus \qq$.  Neither maximal ideal of $A$ contains
  $\{s,t\}$, so the ideal $\langle s , t \rangle \subseteq A$ is
  improper: $\langle s, t \rangle = A$.  Therefore $a = sy + tx$ for
  some $x,y \in A$.  Then
  \begin{equation*}
    \frac{a}{b} = \frac{tx + sy}{st} = \frac{x}{s} + \frac{y}{t} \in
    A_\pp + A_\qq.
  \end{equation*}
  This proves the claim that $A_\pp + A_\qq = K$.

  Next, we show that the topologies on $K$ induced by $A_\pp$ and
  $A_\qq$ are independent.  By the discussion before the Lemma, it
  suffices to show that
  \begin{equation*}
    1 \stackrel{?}{\in} \alpha A_\pp + \beta A_\qq
  \end{equation*}
  for $\alpha, \beta \in K^\times$.  The ring topology on $K$ induced by
  $A$ is non-discrete, so there is some non-zero $\gamma \in \alpha A
  \cap \beta A$.  Then $\gamma/\alpha \in A \subseteq A_\pp$, so
  $\alpha A_\pp \supseteq \gamma A_\pp$.  Similarly $\beta A_\qq
  \supseteq \gamma A_\qq$.  Then
  \begin{equation*}
    \alpha A_\pp + \beta A_\qq \supseteq \gamma A_\pp + \gamma A_\qq =
    \gamma K = K \ni 1.  \qedhere
  \end{equation*}
\end{proof}

\begin{proposition} \label{third-step}
  If the GHC fails, then there is an NIP expansion of a field $K$ with
  two independent definable field topologies $\tau_1$ and $\tau_2$.
  Moreover, we can arrange that $\tau_1$ and $\tau_2$ are induced by
  local rings on $K$.
\end{proposition}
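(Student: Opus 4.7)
The plan is to leverage Fact~\ref{evil} to obtain, from the failure of the GHC, a problematic NIP ring $A$: an NIP integral domain with exactly two maximal ideals $\mm_1, \mm_2$, each with infinite residue field.  From this I will extract two independent definable field topologies on $K := \Frac(A)$ induced by local rings.  Concretely, set $R_i := A_{\mm_i}$ for $i = 1, 2$; each is a local integral domain with $\Frac(R_i) = K$, and each is parameter-definable in the pair $(K, A)$, which is an NIP expansion of $K$ (the field $K$ is an interpretable sort in the NIP ring $A$, and the two maximal ideals are parameter-definable since there are only finitely many of them).  Since each $R_i$ is an NIP integral domain that is not a field, Corollary~\ref{new-cor} shows that the $R_i$-adic topologies $\tau_i := \tau_{R_i}$ are definable field topologies on $K$, manifestly induced by local rings.

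To verify that $\tau_1$ and $\tau_2$ are independent, the plan is to apply Lemma~\ref{independence-trick}.  A direct unwinding (via Remark~\ref{remdependent}, following step~2 of the proof of Lemma~\ref{independence-trick}) shows that independence of $\tau_1$ and $\tau_2$ reduces to the single identity $R_1 + R_2 = K$; that identity is in turn equivalent to the factorization hypothesis of Lemma~\ref{independence-trick}, namely that every nonzero $a \in A$ factors as $a = st$ with $s \in A \setminus \mm_1$ and $t \in A \setminus \mm_2$.

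The hardest step will be arranging this factorization hypothesis on $A$.  A general problematic ring need not satisfy it---witness $k[x,y]_{(x,y) \cap (x-1,y)}$, where $y \in \mm_1 \cap \mm_2$ admits no such factorization---so the plan is to strengthen Fact~\ref{evil} and output a problematic NIP ring $A$ in which every needed factorization exists.  A natural route is a saturation argument: pass to a sufficiently saturated elementary extension of the given problematic NIP ring and realize, for each nonzero $a \in \mm_1 \cap \mm_2$, the partial type $\{st = a,\ s \in A \setminus \mm_1,\ t \in A \setminus \mm_2\}$.  The main subtlety is consistency of each such type modulo the theory of a problematic NIP ring; one plausible route is to argue that pathological examples such as the $k[x,y]$-ring above are never NIP in the first place, so the factorization holds in any NIP problematic ring and saturation merely makes it uniform.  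Once the hypothesis of Lemma~\ref{independence-trick} is in place for the enlarged $A$, the lemma delivers independence of $\tau_1$ and $\tau_2$, and the proposition follows.
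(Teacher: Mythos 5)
Your setup is right and you have correctly identified the crux: a problematic NIP ring $A$ need not satisfy the factorization hypothesis of Lemma~\ref{independence-trick} out of the box, and you give a good non-example ($k[x,y]$ localized at the complement of $(x,y) \cup (x-1,y)$, where $y$ cannot factor). But your two proposed repairs do not close the gap. The saturation argument requires the type $\{st=a,\ s \in A\setminus\mm_1,\ t\in A\setminus\mm_2\}$ to be finitely satisfiable in $A$, which is exactly what is at issue and is not finite in nature, so saturation buys nothing. The alternative---that problematic NIP rings automatically satisfy the factorization property---is asserted without argument, and there is no obvious reason it should hold; one would need an independent NIP-theoretic result.

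The paper's resolution is a purely algebraic quotient trick that you could not be expected to guess: form the multiplicative set $S = (A\setminus\pp)\cdot(A\setminus\qq)$, and let $\rr$ be an ideal of $A$ maximal among those disjoint from $S$. By a standard fact \cite[Proposition~2.11]{eisenbud}, $\rr$ is prime, and moreover $\rr \subseteq \pp \cap \qq$ since $\rr$ meets neither $A\setminus\pp$ nor $A\setminus\qq$. Maximality of $\rr$ forces the factorization \emph{modulo} $\rr$: if $x \notin \rr$ then $\rr + (x)$ meets $S$, so $x \equiv st \pmod{\rr}$ with $s \notin \pp$, $t \notin \qq$. Passing to $A' = A/\rr$ gives an integral domain with exactly two maximal ideals $\pp' = \pp/\rr$, $\qq' = \qq/\rr$ in which the factorization hypothesis of Lemma~\ref{independence-trick} holds verbatim. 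The final ingredient you would also need is that $A'$ remains NIP: this holds because $\rr$, being prime in the NIP ring $A$, is externally definable \cite[Proposition~2.13]{fpcase}, so $A/\rr$ is interpretable in a Shelah expansion of $A$. With $A'$ in hand, your proposed application of Lemma~\ref{independence-trick} to the localizations $A'_{\pp'}$ and $A'_{\qq'}$ then goes through exactly as you describe, and definability of $\pp',\qq'$ (hence of the localizations and the topologies) in $(K,A')$ follows from \cite[Corollary~2.4]{fpcase}.
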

\begin{proof}
  Suppose the GHC fails.  By Fact~\ref{evil}, there is an NIP integral domain
  $A$ with exactly two maximal ideals $\pp$ and $\qq$.  Let $S =
  (A \setminus \pp) \cdot (A \setminus \qq)$.  Then $S$ is a
  multiplicative subset of $A \setminus \{0\}$.  Let $\rr$ be maximal
  among ideals of $A$ disjoint from $S$.  By a well-known fact in commutative algebra \cite[Proposition~2.11]{eisenbud}, $\rr$
  is a prime ideal.  Moreover, if $x \in A \setminus \rr$, then $\rr
  + \langle x \rangle$ intersects $S$, meaning that $x \equiv y
  \pmod{\rr}$ for some $y \in S$.  By definition of $S$, we have shown
  the following:
  \begin{claim} \label{cl77}
    If $x \in A \setminus \rr$, then there are $s \in A \setminus
    \pp$ and $t \in A \setminus \qq$ such that $x \equiv st
    \pmod{\rr}$.
  \end{claim}
  Note that $\rr$ is disjoint from $A \setminus \pp$ and $A
  \setminus \qq$, so $\rr \subseteq \pp \cap \qq$.

  Let $A' = A/\rr$.  The maximal ideals of $A'$ are $\pp' = \pp/\rr$
  and $\qq' = \qq/\rr$.  By the Claim, any non-zero element of $A'$ is
  equal to a product of an element of $A' \setminus \pp'$ and an
  element of $A' \setminus \qq'$.  By Lemma~\ref{independence-trick},
  the two localizations $A'_{\pp'}$ and $A'_{\qq'}$ induce independent
  topologies on $K = \Frac(A')$.

  The ring $A'$ is NIP, because it is the quotient of the NIP ring $A$
  by the ideal $\rr$, which is externally definable by
  \cite[Proposition~2.13]{fpcase}.  Then the pair $(K,A')$ is NIP.  In
  this pair, the two primes $\pp'$ and $\qq'$ are definable
  \cite[Corollary~2.4]{fpcase}, and the sets $A'_{\pp'}$ and
  $A'_{\qq'}$ are definable.  Therefore, the two induced topologies on
  $K$ are definable in $(K,A')$, an NIP expansion of a field.
\end{proof}

\begin{theorem} \label{topreform}
  The following statements are equivalent:
  \begin{enumerate}
  \item \label{green} The GHC holds: if $R$ is an NIP integral domain, then $R$ is a
    Henselian local ring.
  \item If $K$ is an NIP expansion of a field, and $\tau$ is a
    definable field topology on $K$, then $(K,\tau)$ is gt-henselian.
  \item \label{re4.5} If $K$ is an NIP expansion of a field, and $\tau_1, \tau_2$
    are two definable field topologies on $K$, then $\tau_1$ and
    $\tau_2$ are \emph{not} independent.
  \end{enumerate}
\end{theorem}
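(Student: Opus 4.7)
The plan is to prove the three-way equivalence circularly, namely $(1) \Rightarrow (2) \Rightarrow (3) \Rightarrow (1)$, assembling lemmas already established in the previous subsections; no additional technical work is needed.

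For $(1) \Rightarrow (2)$, I would simply invoke Proposition~\ref{gt-hens-prop}: one of its explicit hypotheses is ``the GHC holds,'' and its conclusion is that every definable field topology on an NIP expansion of a field is gt-henselian. So this direction is immediate.

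For $(2) \Rightarrow (3)$, I would argue by contraposition. Suppose $(3)$ fails, so there is an NIP expansion $K$ of a field carrying two independent definable field topologies $\tau_1, \tau_2$. By Lemma~\ref{plus-lem}(1), the join $\tau_1 + \tau_2$ is non-discrete and hence a field topology; by Remark~\ref{plus-rem}(3), it is still definable in the same structure; and by Lemma~\ref{plus-lem}(2), it is not gt-henselian. This directly contradicts $(2)$. For $(3) \Rightarrow (1)$, also by contraposition: if the GHC fails, then Proposition~\ref{third-step} produces an NIP expansion of a field on which two independent definable field topologies exist (even arising from local rings), contradicting $(3)$.

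The main obstacle in the whole equivalence is not in the final assembly but in the two technical inputs, both already established earlier: Lemma~\ref{plus-lem}(2), which uses the inverse function theorem for $x^2 - x$ together with the density of the diagonal to block gt-henselianity of $\tau_1 + \tau_2$, and Proposition~\ref{third-step}, which passes from a problematic NIP integral domain (produced via Fact~\ref{evil}) to one with two genuinely independent localizations by a Zorn's-lemma quotient argument and Lemma~\ref{independence-trick}. Once these are in hand, the proof of Theorem~\ref{topreform} is essentially a one-line citation per implication.
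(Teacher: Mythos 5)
Your proof is correct and matches the paper's own argument almost exactly: $(1)\Rightarrow(2)$ via Proposition~\ref{gt-hens-prop}, $(2)\Rightarrow(3)$ via Remark~\ref{plus-rem} and Lemma~\ref{plus-lem} (as in Corollary~\ref{redundant-cor}), and $(3)\Rightarrow(1)$ via Proposition~\ref{third-step}. Nothing to change.
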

\begin{proof}
  (1)$\implies$(2) is Proposition~\ref{gt-hens-prop}.
  (2)$\implies$(3) is by Remark~\ref{plus-rem} and
  Lemma~\ref{plus-lem}, just as in the proof of
  Corollary~\ref{redundant-cor}.  (3)$\implies$(1) is
  Proposition~\ref{third-step}.
\end{proof}
Condition (\ref{re4.5}) in Theorem \ref{topreform} is somewhat analogous
to the following:
\begin{fact} \label{another-fact}
  The following are equivalent:
  \begin{enumerate}
  \item The henselianity conjecture for NIP valuation rings holds: any
    NIP valuation ring is henselian.
  \item No NIP expansion of a field admits two independent definable
    V-topologies.
  \end{enumerate}
\end{fact}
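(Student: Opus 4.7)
The plan is to parallel Theorem~\ref{topreform}, with V-topologies in place of arbitrary field topologies and valuation rings in place of arbitrary henselian local rings.

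For $(2) \Rightarrow (1)$, I would adapt Proposition~\ref{third-step}. Suppose (1) fails and fix an NIP valuation ring $\Oo$ on $K = \Frac(\Oo)$ that is not henselian. Non-henselianity yields a finite separable extension $L/K$ to which $\Oo$ has at least two distinct extensions $\Oo_1, \Oo_2$ as valuation rings. Since $L$ is interpretable in $(K, \Oo)$, the pair $(L, \Oo)$ is NIP; in it, the integral closure $\tilde{\Oo}$ of $\Oo$ in $L$ is definable (integrality is witnessed by polynomials of bounded degree $[L:K]$). Localizing $\tilde{\Oo}$ appropriately, one obtains an NIP integral domain $A$ with exactly two maximal ideals $\pp_1, \pp_2$, whose localizations $A_{\pp_i} = \Oo_i$ are valuation rings of $\Frac(A) = L$. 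Now I would run the construction of Proposition~\ref{third-step} on $A$: form $A' = A/\rr$ for an appropriate prime $\rr$. The resulting $A'$ still has two maximal ideals $\pp'_i$, and the localizations $A'_{\pp'_i}$ are quotients of the valuation rings $\Oo_i$ by prime ideals, so they remain valuation rings of $\Frac(A')$. Lemma~\ref{independence-trick} then shows that the two V-topologies induced on $\Frac(A')$ are independent, while $(\Frac(A'), A')$ is an NIP expansion of a field in which both V-topologies are definable, contradicting (2).

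For $(1) \Rightarrow (2)$, I would assume (1) and, for contradiction, suppose an NIP expansion of a field $K$ admits two independent definable V-topologies $\tau_1, \tau_2$. Since independence and the V-topology condition are local in the Prestel--Ziegler sense (the latter by Fact~\ref{some-facts}(1)), I may pass to a sufficiently saturated elementary extension. Combining Theorem~\ref{main-thm2} with Fact~\ref{some-facts}(5) produces externally definable valuation rings $\Oo_1, \Oo_2$ with $\tau_i = \tau_{\Oo_i}$. Each $\Oo_i$ is NIP, hence henselian by (1). Independence of the topologies matches independence of the valuations, so F.~K.~Schmidt's classical theorem forces $K$ to be separably closed.

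The hard part is closing $(1) \Rightarrow (2)$ from here: one must rule out the coexistence, in a single NIP expansion, of two externally definable independent henselian valuations on a separably closed field. A direct attack via Lemma~\ref{plus-lem}(2) combined with Theorem~\ref{main-thm2} would represent $\tau_1 + \tau_2$ as $\tau_R$ for some externally definable NIP ring $R$ (naturally a candidate for $\Oo_1 \cap \Oo_2$), and would yield a contradiction if $R$ were a valuation ring, since then (1) would make $R$ henselian and $\tau_R$ gt-henselian by Fact~\ref{MCF}(2). However, $R$ will typically be a semi-local Pr\"ufer ring, so this route requires additional input---for instance, a result in the model theory of NIP fields ensuring that externally definable henselian valuations must be comparable.
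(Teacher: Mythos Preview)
The paper does not prove this fact independently; it is recorded as a citation to \cite[Corollary~5.6]{hhj-v-top} (modulo \cite[Proposition~3.5]{hhj-v-top}). Your proposal instead attempts a self-contained proof using the machinery developed here, which is a genuinely different route. Your $(2)\Rightarrow(1)$ sketch is essentially sound, though over-engineered: once you have two distinct extensions $\Oo_1,\Oo_2$ of $\Oo$ to $L$, these are incomparable valuation rings on $L$ and hence already induce independent V-topologies; both are (externally) definable in the NIP structure $(L,\tilde\Oo)$, so the quotient-by-$\rr$ step from Proposition~\ref{third-step} is unnecessary. One minor issue: the phrase ``Theorem~\ref{main-thm2} with Fact~\ref{some-facts}(5)'' does not literally yield valuation rings, since $\tau_R$ being a V-topology does not force the particular $R$ produced by Theorem~\ref{main-thm2} to be a valuation ring. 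Use instead the standard fact (as in the proof of Fact~\ref{whoah}, or \cite[Lemma~2.6]{prdf6}) that for a V-topology on a saturated field the ring of bounded elements is an externally definable valuation ring.

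Your $(1)\Rightarrow(2)$ has a real gap, which you acknowledge, but the F.~K.~Schmidt detour into separably closed fields is a dead end. The missing idea is already in this paper: once $\Oo_1,\Oo_2$ are externally definable and henselian (by (1)), the topologies $\tau_1,\tau_2$ are t-henselian, hence gt-henselian by Fact~\ref{MCF}(1). Now apply Proposition~\ref{chaotic} directly: two independent definable gt-henselian topologies force the independence property, contradicting NIP. This closes the argument without any appeal to comparability of henselian valuations or to the structure of separably closed fields. (Of course, Proposition~\ref{chaotic} is itself modelled on \cite[Theorem~5.3]{hhj-v-top}, so in the end you are reproving the cited result rather than circumventing it.)
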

This is essentially \cite[Corollary~5.6]{hhj-v-top}, modulo
\cite[Proposition~3.5]{hhj-v-top}.
\begin{remark}
  The proof of Theorem~\ref{topreform} is easier than
  Fact~\ref{another-fact} because we can use the field topology
  $\tau_1 + \tau_2$ to directly break gt-henselianity.  In a sense we
  are cheating by considering non-local rings in the generalized
  henselianity conjecture.  More precisely, Condition~(\ref{green}) of
  Theorem~\ref{topreform} doesn't assume $R$ is a local ring---we are
  smuggling in the statement ``NIP integral domains are local'' into
  our conjecture.

  In the next section, we will resolve this discrepancy.
\end{remark}

\section{Reduction to local rings}
In this section, we show that the Generalized Henselianity Conjecture
can be checked on local rings (Theorem~\ref{local-form}).  The main
difficulty will be proving Proposition~\ref{chaotic}, which says that
an NIP field cannot admit two independent definable gt-henselian
topologies.  The proof strategy is similar to
\cite[Section~4]{hhj-v-top}, and uses some algebraic geometry.

In the following, an ``algebraic curve'' over $K$ means a
geometrically integral 1-dimensional smooth quasiprojective variety
over $K$.  We begin with a variant of \cite[Lemma~2.11]{hhj-v-top}.
\begin{lemma} \label{nonisol}
  Let $K$ be a field and $\tau$ be a gt-henselian field topology on
  $K$.  Let $C$ be an algebraic curve over $K$.  Then the
  topology on $C(K)$ induced by $\tau$ has no isolated points.
  Equivalently, every non-empty $\tau$-open set in $C(K)$ is infinite.
\end{lemma}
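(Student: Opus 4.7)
The plan is to reduce the statement to the elementary fact that $(K,\tau)$ itself has no isolated points, via Proposition~\ref{understand}(\ref{etale}) and the standard algebraic-geometric fact that a smooth curve is étale-locally $\mathbb{A}^1$. First, $(K,\tau)$ has no isolated points: $\tau$ is translation-invariant because addition is continuous, so if $0$ were isolated then every point of $K$ would be, making $\tau$ discrete and contradicting the definition of a ring topology. Equivalently, every non-empty $\tau$-open subset of $K$ is infinite.

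Fix any $p \in C(K)$. The main step is to exhibit a Zariski-open neighborhood $U$ of $p$ in $C$ together with an étale morphism $\phi : U \to \mathbb{A}^1_K$. Since $C$ is smooth one-dimensional, $\mathcal{O}_{C,p}$ is a DVR; pick a uniformizer $t \in \mathfrak{m}_p \setminus \mathfrak{m}_p^2$ and extend it to a regular function on some Zariski-open $U_0 \ni p$. This yields a morphism $\phi : U_0 \to \mathbb{A}^1_K$ whose cotangent map at $p$ sends the canonical generator of $\mathfrak{m}_0/\mathfrak{m}_0^2$ to $t$, a generator of $\mathfrak{m}_p/\mathfrak{m}_p^2$; hence $d\phi_p$ is an isomorphism and $\phi$ is étale at $p$. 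As étaleness is an open condition, $\phi$ is étale on a smaller Zariski-open $U \subseteq U_0$ still containing $p$.

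Now apply Proposition~\ref{understand}(\ref{etale}) to $\phi$: the map $\phi : U(K) \to K$ is a $\tau$-local homeomorphism. Hence there is a $\tau$-open neighborhood $W$ of $p$ in $U(K)$ with $\phi|_W$ a homeomorphism onto a $\tau$-open set $\phi(W) \subseteq K$. By the first paragraph $\phi(W)$ is infinite, so $W$ is infinite. Since the $\tau$-topology on $C(K)$ is constructed from affine charts, $U(K)$ is $\tau$-open in $C(K)$, so $W$ is an infinite $\tau$-open neighborhood of $p$ in $C(K)$; in particular $p$ is not isolated. The equivalence with ``every non-empty $\tau$-open set in $C(K)$ is infinite'' follows from Hausdorffness of $C(K)$: a finite non-empty open set would let us separate any one of its points from the others, producing an isolated point.

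The only step that is not formal is the construction of the étale map $\phi$, and that is completely standard for smooth curves, so no real obstacle arises. This mirrors \cite[Lemma~2.11]{hhj-v-top} but uses the full gt-henselian hypothesis via the étale-local-homeomorphism characterization rather than a direct appeal to Hensel's lemma.
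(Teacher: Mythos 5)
Your proof is correct and follows essentially the same route as the paper's: produce a Zariski-open $U \ni p$ with an \'etale map $U \to \Aa^1$, apply Proposition~\ref{understand}(\ref{etale}) to get a $\tau$-local homeomorphism, and transfer non-isolatedness from $K$. The only difference is cosmetic---you spell out the uniformizer construction of the \'etale chart, whereas the paper cites \cite[Theorem~039Q]{stacks-project} for it.
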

\begin{proof}
  For any point $p \in C(K)$, there is an open subscheme $U \subseteq
  C$ with $p \in U(K)$ and an etale morphism $f : U \to \Aa^1$.  See
  \cite[Theorem~039Q]{stacks-project} or the proof of
  \cite[Lemma~2.11]{hhj-v-top}.  By
  Proposition~\ref{understand}(\ref{etale}), the map $f : U(K) \to
  \Aa^1(K) = K$ is a local homeomorphism.  Since $K$ has no isolated
  points, neither does $U(K)$, and $p$ is non-isolated.
\end{proof}
Next we need a slight variant of \cite[Lemma~2.10]{hhj-v-top}.
\begin{lemma} \label{RR}
  Let $C$ be a genus $g$ projective algebraic curve over an infinite
  field $K$.  Let $p_1,\ldots,p_n,q$ be distinct points in $C(K)$.  If
  $n \ge 2g+1$, then there is a morphism $f : C \to \Pp^1$ of degree
  $n$, with:
  \begin{itemize}
  \item A simple pole at $p_i$, for each $i$.
  \item No other poles.
  \item A simple zero at $q$.
  \item $n-1$ other zeros (counting multiplicities).
  \end{itemize}
\end{lemma}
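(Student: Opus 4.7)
The plan is to construct $f$ as a rational function in a suitable Riemann--Roch space, and to use the hypothesis that $K$ is infinite to avoid a finite union of proper linear subspaces, exactly as in the proof of \cite[Lemma~2.10]{hhj-v-top}.

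First, I would set $D = p_1 + p_2 + \cdots + p_n$, a divisor of degree $n$ on $C$. Since $n \ge 2g+1 > 2g-1$, all the divisors $D$, $D-q$, $D-q-p_i$, and $D - 2q$ have degree at least $2g-1$, hence are non-special. By Riemann--Roch we get
\begin{align*}
  \ell(D-q) &= n - g, \\
  \ell(D-q-p_i) &= n - g - 1 \quad (i = 1,\ldots,n), \\
  \ell(D-2q) &= n - g - 1.
\end{align*}
In particular, each of the subspaces $L(D-q-p_i)$ and $L(D-2q)$ is a proper (indeed, codimension $1$) $K$-linear subspace of the $K$-vector space $V := L(D-q)$.

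Next, because $K$ is infinite, Fact~\ref{cover-fail} tells us that $V$ is not a union of the finitely many proper subspaces $L(D-q-p_1),\ldots,L(D-q-p_n),L(D-2q)$. Pick
\begin{equation*}
  f \in L(D-q) \setminus \bigl( L(D-2q) \cup L(D-q-p_1) \cup \cdots \cup L(D-q-p_n) \bigr).
\end{equation*}
I now verify that $f$ has the required properties. Since $f \in L(D)$, the pole divisor of $f$ is bounded above by $D = p_1 + \cdots + p_n$, so $f$ has at worst simple poles at each $p_i$ and no other poles. The condition $f \notin L(D - p_i)$ (which follows from $f \notin L(D-q-p_i)$, as $q \ne p_i$ means $L(D-p_i) \cap L(D-q) = L(D-q-p_i)$) forces the pole at $p_i$ to be genuinely simple. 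Hence the pole divisor of $f$ is exactly $\sum p_i$, so $f : C \to \Pp^1$ has degree $n$. Finally, $f \in L(D-q)$ forces $\mathrm{ord}_q(f) \ge 1$, while $f \notin L(D-2q)$ forces $\mathrm{ord}_q(f) \le 1$, so the zero at $q$ is simple. Since the zero divisor of $f$ has total degree $n$, the remaining zeros contribute $n-1$ counted with multiplicity.

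The main (minor) point to be careful about is the Riemann--Roch computation at the boundary: one must check that $\deg(D - q - p_i) = n - 2 \ge 2g - 1$ and $\deg(D - 2q) = n-2 \ge 2g-1$, both of which follow from $n \ge 2g+1$, so these divisors are non-special and their $\ell$-values are given by Riemann--Roch without any cohomological correction. Everything else is a direct bookkeeping of pole and zero orders, and the use of the infiniteness of $K$ via Fact~\ref{cover-fail} is the sole ingredient beyond Riemann--Roch.
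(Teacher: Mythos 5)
Your proof is correct and follows essentially the same route as the paper's: you take the Riemann--Roch space of $p_1+\cdots+p_n-q$ (the paper's $D$, your $L(D-q)$), compute dimensions of the codimension-one subspaces corresponding to forcing a zero at a $p_i$ or a double zero at $q$ (the paper's $H^0(D_i)$, $0\le i\le n$), and use the infiniteness of $K$ via Fact~\ref{cover-fail} to pick a function avoiding that finite union. The verification of pole and zero orders is the same bookkeeping.
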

This is probably trivial to algebraic geometers, but here is the proof
for the rest of us:
\begin{proof}
  Consider the following divisors:
  \begin{align*}
    D &= p_1 + p_2 + \cdots + p_n - q \\
    D_0 &= p_1 + p_2 + \cdots + p_n - 2q \\
    D_i &= p_1 + \cdots + \widehat{p_i} + \cdots + p_n - q \text{ for } 1 \le i \le n.
  \end{align*}
  Each of these is a divisor of degree at least $2g-1$, so the
  Riemann-Roch theorem gives
  \begin{align*}
  	\dim H^0(D) &= \deg(D) - g + 1 = n - g \\
    \dim H^0(D) &= \deg(D_i) - g + 1 = n - g - 1 \text{ for } 0 \le i \le n,
  \end{align*}
where $H^0(D)$ is the vector space of meromorphic functions $f$ with $(f) + D \ge 0$.
  Each vector space $H^0(D_i)$ is a subspace of
  $H^0(D)$, as $D \ge D_i$.  By Fact~\ref{cover-fail} and the fact that $K$
  is infinite, $H^0(D) \supsetneq \bigcup_{i=0}^n
  H^0(D_i)$.  Then we can find a meromorphic function $f : C
  \to \Pp^1$ such that $f \in H^0(D) \setminus
  \bigcup_{i=0}^n H^0(D_i)$, meaning that the divisor $(f)$
  satisfies
  \begin{align*}
    (f) + D &\ge 0 \\
    (f) + D_i & \not \ge 0 \text{ for } 0 \le i \le n.
  \end{align*}
  That is,
  \begin{gather*}
    (f) + p_1 + \cdots + p_n - q \ge 0 \\
    (f) + p_1 + \cdots + p_n - 2q \not \ge 0 \\
    (f) + p_1 + \cdots + \widehat{p_i} + \cdots + p_n - q \not \ge 0
  \end{gather*}
  Then $f$ has a zero of order 1 at $q$, and a pole of order 1 at
  $p_i$ for $1 \le i \le n$.  By the first line, $f$ has no poles at
  any points besides $p_1,\ldots,p_n$.  It follows that the total
  number of poles, counting multiplicities, is $n$, so $f$ has degree
  $n$.  Then the number of zeros is $n$, counting multiplicities, so
  $f$ has $n-1$ other zeros besides $q$.
\end{proof}
Now we need the following variant of \cite[Proposition~4.2]{hhj-v-top}.
\begin{lemma} \label{approx-curves}
  Let $K$ be a field.  Let $\tau_1$ and $\tau_2$ be two independent
  gt-henselian field topologies on $K$.  Let $C$ be an algebraic curve
  over $K$.  Then the two topologies on $C(K)$ induced by $\tau_1$ and
  $\tau_2$ are independent.
\end{lemma}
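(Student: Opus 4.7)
The plan is to show that any nonempty $\tau_1$-open set $U \subseteq C(K)$ and any nonempty $\tau_2$-open set $V \subseteq C(K)$ have nonempty intersection. First, replace $C$ by its smooth projective completion $\bar C$ of genus $g$; note that $U$ and $V$ remain open in $\bar C(K)$ and are both infinite by Lemma~\ref{nonisol}. Set $n = 2g+1$, pick distinct points $p_1,\ldots,p_n \in U$ and a point $q \in V$ with $q \notin \{p_1,\ldots,p_n\}$, and apply Lemma~\ref{RR} to obtain a morphism $f : \bar C \to \Pp^1$ of degree $n$ having simple poles exactly at $p_1,\ldots,p_n$ and a simple zero at $q$.

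Simple zeros and simple poles of $f$ are unramified, so $f$ is etale at each of $p_1,\ldots,p_n,q$. By Proposition~\ref{understand}(\ref{etale}) applied in both topologies, $f$ induces a $\tau_j$-local homeomorphism near each such point, for both $j=1,2$. Using this, together with finiteness of the branch locus of $f$, I would produce pairwise disjoint $\tau_1$-open sets $U_1,\ldots,U_n \subseteq U$ with $p_i \in U_i$, and a common $\tau_1$-open neighborhood $W \ni \infty$ in $\Pp^1(K)$, such that each $f : U_i \to W$ is a $\tau_1$-homeomorphism, and $W$ contains no branch values of $f$ besides possibly $\infty$. Analogously, I would produce a $\tau_2$-open neighborhood $W' \ni 0$ in $\Pp^1(K)$ and a $\tau_2$-open set $V_0 \subseteq V$ with $q \in V_0$ such that $f : V_0 \to W'$ is a $\tau_2$-homeomorphism. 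In fact, since all of $p_1,\ldots,p_n$ are unramified and together exhaust $f^{-1}(\infty)$, the value $\infty$ is not a branch value, so $W$ can be taken to avoid the entire branch locus.

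By Lemma~\ref{nonisol}, no point of $\Pp^1(K)$ is isolated in either topology, so $W \cap K$ and $W' \cap K$ are nonempty $\tau_1$- and $\tau_2$-open subsets of $K$ respectively. By the assumed independence of $\tau_1$ and $\tau_2$ on $K$, there exists $y \in W \cap W' \cap K$. On the one hand, $y \in W'$ yields $x \in V_0$ with $f(x) = y$. On the other hand, $y$ avoids the branch locus and $\deg f = n$, so the geometric fiber $f^{-1}(y)$ consists of exactly $n$ points; but the $n$ distinct $K$-rational preimages $f^{-1}(y) \cap U_i$ already account for all of them. Hence $x$ must lie in some $U_i$, giving $x \in U_i \cap V_0 \subseteq U \cap V$, as required.

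The key obstacle is to arrange for a single morphism $f$ that simultaneously witnesses the local $\tau_1$-structure around $n$ points of $U$ and the local $\tau_2$-structure around a point of $V$, and whose $\tau_1$- and $\tau_2$-images in $\Pp^1(K)$ can be coupled via independence on $K$. This is precisely what Lemma~\ref{RR} is engineered to supply: the Riemann--Roch bound $n \ge 2g+1$ leaves enough room in the relevant space of meromorphic functions to impose the required pole and zero pattern, while forcing the total degree of $f$ to match, so that the preimage counting argument in the fiber over $y$ actually closes up.
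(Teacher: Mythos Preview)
Your proof is correct and follows essentially the same approach as the paper's: pass to the projective completion, use Lemma~\ref{RR} to build a degree-$n$ morphism $f : \bar C \to \Pp^1$ with simple poles at $n$ chosen points of $U$ and a simple zero at a point of $V$, use the etale local homeomorphism property at these points in the two topologies, pick a common value $y$ in the $\tau_1$-image near $\infty$ and the $\tau_2$-image near $0$ via independence on $K$, and then count preimages to force the $\tau_2$-preimage of $y$ to coincide with one of the $\tau_1$-preimages. Your discussion of the branch locus is unnecessary---once you have $n$ distinct preimages of $y$ and $\deg f = n$, those are automatically the entire fiber regardless of whether $y$ is a branch value---but it is not wrong, and you handle the passage from $\Pp^1(K)$ back to $K$ (via Lemma~\ref{nonisol}) slightly more carefully than the paper does.
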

\begin{proof}
  Note that $K$ is infinite, since it admits a non-discrete Hausdorff
  topology.  Replacing $C$ with its projective completion, we may
  assume $C$ is projective.  Let $U_i \subseteq C(K)$ be a non-empty
  $\tau_i$-open set for $i = 1, 2$.  We must show that $U_1 \cap U_2
  \ne \varnothing$.  Take $n \ge 2g+1$, where $g$ is the genus of $C$.
  By Lemma~\ref{nonisol}, $U_1$ is infinite.  Take distinct points
  $p_1,\ldots,p_n \in U_1$, and take $q \in U_2$.  If $q \in
  \{p_1,\ldots,p_n\}$, then $q \in U_1 \cap U_2$ and the proof is
  complete.  So we may assume $q \notin \{p_1,\ldots,p_n\}$.  By
  Lemma~\ref{RR}, there is a morphism of $K$-varieties $f : C \to
  \Pp^1$ such that
  \begin{itemize}
  \item $f$ has degree $n$.
  \item $f$ has a simple pole at $p_i$ for $i = 1, \ldots, n$, and no
    other poles.
  \item $f$ has a simple zero at $q$, and possibly other zeros
    elsewhere.
  \end{itemize}
  Then $f$ is etale at the points $p_1,\ldots,p_n,q$, because the
  poles and zeros are simple \cite[proof of Proposition~2.9]{hhj-v-top}.  By
  Proposition~\ref{understand}(\ref{etale}), $f : U(K) \to \Pp^1(K)$
  is a local homeomorphism at these points, with respect to either
  $\tau_1$ or $\tau_2$.  Focusing on $\tau_1$ and the $p_i$, we can
  find $\tau_1$-open neighborhoods $V_1,\ldots,V_n,V_\infty$:
  \begin{gather*}
    \infty \in V_\infty \subseteq \Pp^1(K) \\
    p_i \in V_i \subseteq C(K) \text{ for } 1 \le i \le n
  \end{gather*}
  such that
  \begin{itemize}
  \item For $i = 1, \ldots, n$, the function $f : C(K) \to \Pp^1(K)$
    maps $V_i$ homeomorphically to $V_\infty$ with respect to
    $\tau_1$.
  \item The sets $V_1,\ldots,V_n$ are pairwise disjoint.
  \item Each $V_i$ is a subset of $U_1$.
  \end{itemize}
  We can also find $\tau_2$-open neighborhoods $W_q, W_0$:
  \begin{gather*}
    0 \in W_0 \subseteq \Pp^1(K) \\
    q \in W_q \subseteq C(K) 
  \end{gather*}
  such that $f$ maps $W_q$ homeomorphically to $W_0$ with respect to
  $\tau_2$, and $W_q \subseteq U_2$.

  Since $\tau_1$ and $\tau_2$ are independent, we can find some point
  $a \in V_\infty \cap W_0 \subseteq \Pp^1(K)$.  For $i = 1,\ldots,n$,
  let $b_i$ be the unique point in $V_i$ mapping to $a \in V_\infty$.
  The $b_i$ are pairwise distinct because the $V_i$ are pairwise
  disjoint.  Then $\{b_1,\ldots,b_n\}$ is the entire fiber
  $f^{-1}(a)$, because $f$ has degree $n$.  Let $c$ be the unique
  point in $W_q$ mapping to $a$.  Then $c \in f^{-1}(a)$, so $c = b_i$
  for some $i$.  Finally,
  \begin{gather*}
    c \in W_q \subseteq U_2 \\
    c = b_i \in V_i \subseteq U_1,
  \end{gather*}
  so $c \in U_1 \cap U_2$.
\end{proof}
The proof of Lemma~\ref{approx-curves} could probably be simplified by
following the proof of \cite[Proposition~4.2]{hhj-v-top} more closely.
The proof given here is chosen to minimize the use of the assumption
``$\tau_2$ is gt-henselian'', in the hopes that the proof could be
modified to prove Conjecture~\ref{un1} in the appendix.

Next, we need an analog of \cite[Theorem~5.3]{hhj-v-top}.
\begin{proposition} \label{chaotic}
  Let $K$ be a field, possibly with extra structure.  Let $\tau_1$ and
  $\tau_2$ be two independent, definable gt-henselian field topologies
  on $K$.  Then $K$ has the independence property.
\end{proposition}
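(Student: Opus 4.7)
The plan is to exhibit a formula with the independence property, adapting the strategy of \cite[Theorem~5.3]{hhj-v-top} (the V-topological case) to the gt-henselian setting. The role of the V-topological approximation theorem used there will be played by Lemma~\ref{approx-curves}, and the role of Hensel's lemma for valuations will be played by the polynomial implicit function theorem in Proposition~\ref{understand}.

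First I would pass to a sufficiently saturated elementary extension of $K$, since IP is preserved and the topologies $\tau_1, \tau_2$ remain independent, definable, and gt-henselian. Fix definable basic $\tau_i$-neighborhoods $U_i \ni 0$. For each $n$, fix a smooth projective algebraic curve $C$ over $K$ of genus $g$ with $n \ge 2g+1$ and at least $n+1$ distinct $K$-rational points $p_1, \ldots, p_n, q$ (using saturation to ensure existence), and apply Lemma~\ref{RR} to obtain a degree-$n$ morphism $f : C \to \Pp^1$ with simple poles at the $p_i$ and a simple zero at $q$. By Proposition~\ref{understand}(\ref{etale}), $f$ is a $\tau_i$-local homeomorphism at each $p_i$ and at $q$ for $i = 1, 2$; fix pairwise disjoint $\tau_1$-open sets $V_i \ni p_i$ mapping homeomorphically via $f$ onto a common $\tau_1$-open $V_\infty \ni \infty$, and a $\tau_2$-open $W_q \ni q$ mapping homeomorphically via $f$ onto a $\tau_2$-open $W_0 \ni 0$. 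By Lemma~\ref{approx-curves}, $\tau_1$ and $\tau_2$ induce independent topologies on $C(K)$ and on $\Pp^1(K)$.

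For each subset $S \subseteq \{1, \ldots, n\}$, one uses the independence of $\tau_1$ and $\tau_2$ on $\Pp^1(K)$ to produce a target point $a_S \in V_\infty \cap W_0$ whose $\tau_2$-data, once lifted through $f$ via gt-henselianity, singles out the branches of $f^{-1}(a_S)$ near exactly the $p_i$ with $i \in S$. The IP-witnessing formula $\varphi(x; \bar{y})$ then combines $\tau_1$-proximity to the $p_i$'s with $\tau_2$-proximity at $q$ transported via $f$, so that $\varphi(x_S; \bar{b}_i) \iff i \in S$ for suitable parameter tuples $\bar{b}_i$ assembled from $V_i, W_0, f$. The main obstacle is precisely this combinatorial design: the naive formula ``$x \in p_i + U_1$'' cannot shatter sets of size greater than one, since a single element of $K$ can be $\tau_1$-close to at most one of the pairwise $\tau_1$-far $p_i$'s. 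Subset membership must therefore be encoded indirectly through branch choices in the $n$-sheeted covering $f$, using $\tau_2$-data at $q$ as the selector. In the V-topological argument of \cite[Theorem~5.3]{hhj-v-top} this branch selection exploits the linear order of the value group, which has no direct analog here, so in our setting one must rely more delicately on the IFT of Proposition~\ref{understand}---this is where I expect the technical work to concentrate.
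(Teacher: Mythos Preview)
Your proposal has a genuine structural gap, and it is precisely at the point you flag as ``where the technical work will concentrate.'' In your setup you fix a single curve $C$, a degree-$n$ map $f : C \to \Pp^1$, and a single point $q$ with a $\tau_2$-neighborhood $W_q$ mapping homeomorphically onto $W_0$. For any $a \in V_\infty \cap W_0$, the fiber $f^{-1}(a)$ meets each $V_i$ in exactly one point, and the $\tau_2$-lift through $W_q$ picks out \emph{exactly one} of these $n$ branches, not a subset. So the $\tau_2$-data at a single zero $q$ can encode an element of $\{1,\ldots,n\}$, but never a subset of $\{1,\ldots,n\}$. There is no way to recover a $2^n$-fold combinatorial freedom from an $n$-sheeted cover with a single distinguished sheet; the degree is simply too small. (In \cite[Theorem~5.3]{hhj-v-top} the shattering does \emph{not} come from the branch structure of a Riemann--Roch cover either; Lemma~\ref{RR} is used there only to transfer independence to $C(K)$, exactly as in Lemma~\ref{approx-curves}.)

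The paper's proof takes a different route that builds in the $2^n$-fold freedom from the start. One fixes definable sets $X^+, X^-$ recording whether the $\tau_1$-square-root of an element near $1$ agrees or disagrees in sign with its $\tau_2$-square-root (using Proposition~\ref{understand} to get local square roots for each topology). Given distinct $c_1,\ldots,c_n$, the curve $y_i^2 = 1 + c_i x$ in $\Aa^{n+1}$ has $2^n$ points over $x = 0$, namely $(0,\pm 1,\ldots,\pm 1)$; one applies Lemma~\ref{approx-curves} to approximate $(0,1,\ldots,1)$ in $\tau_1$ and $(0,s_1,\ldots,s_n)$ in $\tau_2$ simultaneously, where the signs $s_i$ encode $S$. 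The resulting $a$ satisfies $1 + c_i a \in X^+ \iff i \in S$, so the formula ``$1 + xy \in X^+$'' has IP. The point is that the parameters $c_i$ vary while the formula stays fixed, and the $2^n$ sign patterns live on a single curve depending on those parameters---this is what your single-cover picture cannot reproduce.
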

\begin{proof}
  If $\characteristic(K) = p > 0$, then $K$ has the independence property by
  Corollary~\ref{redundant-cor}.  So we may assume $\characteristic(K) = 0$.  Take definable $\tau_1$-open
  neighborhoods $U, V \ni 1$ such that the squaring map is a bijection
  from $U$ to $V$.  Shrinking $U$ and $V$, we can assume $U \cap (-U) =
  \varnothing$.  If $x \in V$, let $\sqrt{x}$ denote the unique square
  root in $U$.  Similarly, let $U', V' \ni 1$ be definable
  $\tau_2$-open neighborhoods such that the squaring map is a
  bijection from $U'$ to $V'$ and $U' \cap (-U') = \varnothing$.  If $x
  \in V \cap V'$, then $x$ has a unique square root $\sqrt{x}$ in $U$,
  as well as a unique square root in $U'$, which will be either
  $\sqrt{x}$ or $-\sqrt{x}$.  Let
  \begin{gather*}
    X^+ = \{x \in V \cap V' : \sqrt{x} \in U'\} \\
    X^- = \{x \in V \cap V' : -\sqrt{x} \in U'\}.
  \end{gather*}
  The two sets $X^+$ and $X^-$ are definable.  The set $V \cap V'$ is
  a disjoint union $X^+ \sqcup X^-$.
  \begin{claim} \label{cl83}
    For any distinct $c_1, \ldots, c_n \in K^\times$ and $S \subseteq
    \{1,\ldots,n\}$, there is some $a \in K^\times$ such that
    \begin{gather*}
      1 + c_i a \in X^+ \text{ if } i \in S \\
      1 + c_i a \in X^- \text{ if } i \notin S
    \end{gather*}
    for all $i = 1, 2, \ldots, n$.
  \end{claim}
  The claim implies that the condition $1 + xy \in X^+$ has the
  independence property.  It remains to prove the claim.

  We need elements $a, b_1, \ldots, b_n$ such that for each $i$,
  \begin{gather*}
    b_i^2 = 1 + ac_i \\
    b_i \in U \\
    b_i \in U' \text{ if } i \in S \\
    -b_i \in U' \text{ if } i \notin S.
  \end{gather*}
  (Indeed, $b_i \in U \implies b_i^2 = 1 + ac_i \in V$, and similarly
  $\pm b_i \in U' \implies 1 + ac_i \in V'$.  Then $1 + ac_i \in V
  \cap V' = X^+ \sqcup X^-$.  Also, $b_i = \sqrt{1 + ac_i}$, and so $i
  \in S \implies 1 + ac_i \in X^+$.  Similarly, $i \notin S \implies 1
  + ac_i \in X^-$.)

  Let $C$ be the algebraic curve in the variables $(x,y_1,\ldots,y_n)$
  defined by the equations
  \begin{equation*}
    y_i^2 = 1 + xc_i.
  \end{equation*}
  Then $C$ is smooth and irreducible, using the fact that the $c_i$
  are distinct \cite[Chapter 10, Exercise 4]{field-arithmetic}.  Let $p = (0,1,1,1,\ldots,1)$.
  Let $q = (0,s_1,\ldots,s_n)$, where $s_i$ is $+1$ if $i \in S$ and
  $-1$ if $i \notin S$.  Then $p$ and $q$ are both points on $C$.  By
  Lemma~\ref{approx-curves}, we can find a point $(a,b_1,\ldots,b_n)$
  on the curve $C$ which is arbitrarily $\tau_1$-close to $p$ and
  $\tau_2$-close to $q$.  Then $b_i^2 = 1 + ac_i$ for each $i$, and
  $b_i \in U$ (since $1 \in U$) and $\pm b_i \in U'$ (because $\pm s_i
  \in U'$), where the sign of the $\pm$ depends on whether $i \in S$.  This proves the claim.
\end{proof}

\begin{theorem} \label{local-form}
  The following statements are equivalent:
  \begin{enumerate}
  \item If $R$ is an NIP integral domain, then $R$ is a henselian local
    ring.
  \item If $R$ is an NIP ring, then $R$ is a direct product of
    finitely many henselian local rings.
  \item If $R$ is an NIP local ring, then $R$ is henselian.
  \item If $R$ is an NIP local domain, then $R$ is henselian.
  \end{enumerate}
\end{theorem}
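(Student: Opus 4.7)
The plan is to prove $(1)\Leftrightarrow(2)$, $(2)\Rightarrow(3)$, $(3)\Rightarrow(4)$, $(1)\Rightarrow(4)$, and the crucial $(4)\Rightarrow(1)$. The equivalence $(1)\Leftrightarrow(2)$ has already been recorded — it is part of Conjecture~\ref{ghcv2} and is \cite[Proposition~3.6]{noetherian}. For $(2)\Rightarrow(3)$: if $R$ is an NIP local ring, then by $(2)$ one can write $R = R_1 \times \cdots \times R_n$ with each $R_i$ henselian local; but a local ring has no nontrivial idempotents, so $n = 1$ and $R$ is itself henselian. The implications $(1)\Rightarrow(4)$ and $(3)\Rightarrow(4)$ are immediate, since an NIP local integral domain is in particular both an NIP integral domain and an NIP local ring.

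The substantive direction is $(4)\Rightarrow(1)$, which I would prove by contradiction. If $(1)$ fails then Fact~\ref{evil} provides a problematic NIP ring $A$ — an integral domain with exactly two maximal ideals $\pp$ and $\qq$, both with infinite residue field. Following the construction in the proof of Proposition~\ref{third-step}, I would choose a prime $\rr$ maximal among the ideals of $A$ disjoint from $(A\setminus\pp)\cdot(A\setminus\qq)$, set $A' = A/\rr$, and let $\pp', \qq'$ be the two maximal ideals of $A'$. Then $A'$ is an NIP integral domain (since $\rr$ is externally definable by \cite[Proposition~2.13]{fpcase}), and Lemma~\ref{independence-trick} produces two independent ring topologies $\tau_1,\tau_2$ on $K = \Frac(A')$, induced respectively by $A'_{\pp'}$ and $A'_{\qq'}$.

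The crucial observation is that $A'_{\pp'}$ and $A'_{\qq'}$ are \emph{NIP local integral domains}: the ideals $\pp',\qq'$ are definable in $(K,A')$ by \cite[Corollary~2.4]{fpcase}, so the two localizations are definable subrings of $K$ and hence NIP, and they are local by construction. Assumption~$(4)$ therefore forces both $A'_{\pp'}$ and $A'_{\qq'}$ to be henselian, so by Fact~\ref{MCF}(2) the topologies $\tau_1$ and $\tau_2$ are gt-henselian field topologies on $K$, definable in the NIP expansion of a field $(K,A')$. Since they are also independent, Proposition~\ref{chaotic} forces $(K,A')$ to have the independence property, contradicting NIP.

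The heavy technical work has already been packaged into Proposition~\ref{chaotic} via the curve-theoretic machinery of Lemmas~\ref{nonisol}--\ref{approx-curves}, so the main task here is a fairly mild upgrade of the topological reformulation in Theorem~\ref{topreform}: whereas Proposition~\ref{third-step} only produced two independent definable \emph{field} topologies from a problematic NIP ring, here we must observe that assumption~$(4)$ — henselianity of the NIP local integral domains $A'_{\pp'}$ and $A'_{\qq'}$ — combined with Fact~\ref{MCF}(2) lifts those topologies to \emph{gt-henselian} field topologies, which is exactly the hypothesis needed to apply Proposition~\ref{chaotic}. The only real obstacle, namely verifying that the two localizations are definable (hence NIP) in the pair $(K,A')$, is handled by the definability results of \cite{fpcase} already invoked in the proof of Proposition~\ref{third-step}.
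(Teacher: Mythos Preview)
Your proposal is correct and follows essentially the same route as the paper. The only cosmetic difference is that the paper invokes Proposition~\ref{third-step} directly (using its ``Moreover'' clause that $\tau_1,\tau_2$ come from definable local rings $A_1,A_2$ with $\Frac(A_i)=K$), whereas you unpack that construction inline; either way one then applies (4) to the two NIP local domains, obtains gt-henselianity via Fact~\ref{MCF}(2), and finishes with Proposition~\ref{chaotic}.
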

\begin{proof}
  (1)$\iff$(2) is \cite[Proposition~3.6]{noetherian}.  (2)$\implies$(3)$\implies$(4) is clear.  It
  remains to prove (4)$\implies$(1).  Suppose (1) fails.  By
  Proposition~\ref{third-step}, there is an NIP structure in which
  there is a definable field $K$ and definable local subrings $A_1,
  A_2 \subseteq K$ with $\Frac(A_i) = K$, such that if $\tau_i$ is the
  topology on $K$ induced by $A_i$, then $\tau_1$ and $\tau_2$ are
  independent.  By (4), $A_1$ and $A_2$ are henselian local rings,
  implying that $\tau_1$ and $\tau_2$ are gt-henselian.  Then $K$ has
  two definable, independent, gt-henselian field topologies.  By
  Proposition~\ref{chaotic}, $K$ has the IP, a contradiction.
\end{proof}

\section{Does the Shelah conjecture imply the Generalized Henselianity Conjecture?}
\label{thoughts}

Recall the three conjectures:
\begin{conjecture-star}[Shelah conjecture]
  If $K$ is an NIP field, then $K$ is finite or separably closed or
  real closed or $K$ admits a definable henselian valuation.
\end{conjecture-star}
\begin{conjecture-star}[Henselianity conjecture]
  If $(K,v)$ is an NIP valued field, then $v$ is henselian.
\end{conjecture-star}
\begin{conjecture-star}[Generalized henselianity conjecture]
  If $R$ is an NIP integral domain, then $R$ is a henselian local
  ring.
\end{conjecture-star}
Halevi, Hasson, and Jahnke prove that the Shelah conjecture
(Conjecture~\ref{shelconj}) implies the henselianity conjecture
(\ref{hensconj}).  Can we do the same with the \emph{generalized}
henselianity conjecture?
\begin{goal} \label{future-goal}
  Prove the generalized henselianity conjecture, \emph{assuming} the
  Shelah conjecture.
\end{goal}
Aside from putting the generalized henselianity conjecture on a firmer
foundation, this would reduce many questions about NIP rings and
topological fields to questions about NIP fields.

While I have not succeeded at Goal~\ref{future-goal}, I believe it may
be within reach of current techniques.  The obvious strategy is to
mimic the proofs in \cite{hhj-v-top}.  A key technical lemma there is
the following fact \cite[Proposition~4.2]{hhj-v-top}:
\begin{enumerate}
\item Let $C$ be an algebraic curve over a field $K$.  Let $\tau_1$
  and $\tau_2$ be two independent V-topologies on $K$, with $\tau_1$
  t-henselian.  Then $\tau_1$ and $\tau_2$ induce independent
  topologies on $C(K)$.
\end{enumerate}
Here, two topologies $\tau_1, \tau_2$ on a set $S$ are
\emph{independent} if any non-empty $\tau_1$-open set $U \subseteq S$
intersects any non-empty $\tau_2$-open set $V \subseteq S$.

Note that \cite[Proposition~4.2]{hhj-v-top} is a direct inspiration
for our Lemma~\ref{approx-curves}, which said:
\begin{enumerate}[resume]
\item Let $C$ be an algebraic curve over a field $K$.  Let $\tau_1$
  and $\tau_2$ be two independent field topologies on $K$, with $\tau_1$
  and $\tau_2$ gt-henselian.  Then $\tau_1$ and $\tau_2$ induce independent
  topologies on $C(K)$.
\end{enumerate}
In order to run the Halevi-Hasson-Jahnke arguments, we need something
stronger---a hybrid of \cite[Proposition~4.2]{hhj-v-top} and
Lemma~\ref{approx-curves}:
\begin{unlikely} \label{un1}
  Let $C$ be an algebraic curve over a field $K$.  Let $\tau_1$ and
  $\tau_2$ be two independent field topologies on $K$, with $\tau_1$
  t-henselian.  Then $\tau_1$ and $\tau_2$ induce independent
  topologies on $C(K)$.
\end{unlikely}
\begin{unlikely} \label{un2}
  Let $C$ be an algebraic curve over an algebraically closed field
  $K$.  Let $\tau_1$ and $\tau_2$ be two independent field topologies
  on $K$.  Then $\tau_1$ and $\tau_2$ induce independent topologies on
  $C(K)$.
\end{unlikely}
\begin{theorem} \label{roundabout}
  If the Shelah conjecture and Conjectures~\ref{un1} and \ref{un2}
  hold, then the Generalized Henselianity Conjecture holds.
\end{theorem}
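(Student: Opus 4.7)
The plan is to adapt the Halevi-Hasson-Jahnke derivation of the henselianity conjecture from the Shelah conjecture, replacing their V-topological inputs with the two conjectural curve-independence statements. Suppose for contradiction that the GHC fails. By Theorem~\ref{topreform}(\ref{re4.5}) together with Proposition~\ref{third-step}, there is an NIP expansion of a field $(K,\ldots)$ carrying two independent definable field topologies $\tau_1, \tau_2$, each induced by a definable local subring $A_i \subseteq K$ with $\Frac(A_i) = K$. Apply the Shelah conjecture to the pure field reduct of $K$ and consider the four cases: $K$ finite (impossible because $\tau_1$ is non-discrete), $K$ separably closed, $K$ real closed, and $K$ admitting a definable henselian valuation.

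In the real closed and henselian valuation cases, the pure field $K$ carries a canonical definable t-henselian V-topology $\tau_0$---respectively the order topology or the valuation topology---which is gt-henselian by Fact~\ref{MCF}(1). The first step is to argue that $\tau_0$ must be independent from at least one of $\tau_1, \tau_2$: if $\tau_0 \not\perp \tau_i$ for both $i$, then using the affine reformulation of independence in Remark~\ref{remdependent} one finds $\tau_1$- and $\tau_2$-neighborhoods of $0$ missing $1$ after a fixed $\tau_0$-perturbation, and the local-ring structure of $A_1, A_2$ together with the V-topological control on $\tau_0$ can be used to contradict the independence of $\tau_1$ and $\tau_2$ directly. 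Once $\tau_0 \perp \tau_1$, say, the pair $(\tau_0, \tau_1)$ satisfies the hypothesis of Conjecture~\ref{un1}, which yields independence of the induced topologies on any algebraic curve $C(K)$. One then runs the square-root construction in the proof of Proposition~\ref{chaotic}: the neighborhoods on which squaring is a local homeomorphism come on the $\tau_0$-side from gt-henselianity via Proposition~\ref{understand}, and on the $\tau_1$-side they are built directly out of $A_1$ (the coset $1 + \mm_{A_1}$ inside $A_1^\times$, on which squaring is injective when $2 \in A_1^\times$; characteristic $2$ is already covered by Proposition~\ref{gt-hens-prop}). This produces a formula with the independence property in $(K,\ldots)$, contradicting NIP.

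For the separably closed case, one passes to the algebraic closure $\bar K$---a purely inseparable extension of $K$ in positive characteristic, and equal to $K$ in characteristic zero---which preserves NIP, and along which $\tau_1$ and $\tau_2$ extend to independent definable field topologies. Conjecture~\ref{un2}, which requires no henselianity hypothesis, then plays the role of Conjecture~\ref{un1} in the curve argument above. The main obstacle I anticipate is the asymmetric step in the second paragraph: the proof of Proposition~\ref{chaotic} as written uses gt-henselianity of \emph{both} topologies, so making it go through with only one side gt-henselian (the $\tau_0$-side, while $\tau_1$ is only known to be induced by a local subring) requires replacing every appeal to Proposition~\ref{understand} on the $\tau_1$-side with a direct algebraic construction inside $A_1$. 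A secondary but essential difficulty is the independence dichotomy for $\tau_0$ against $\tau_1, \tau_2$, sketched informally above, which may well be the technical heart of the proof---and it is precisely because of these two difficulties that Conjectures~\ref{un1} and~\ref{un2} were stated in their present asymmetric form.
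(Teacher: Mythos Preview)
Your overall strategy matches the paper's: reduce via Theorem~\ref{topreform} to an NIP expansion of a field carrying two independent definable field topologies, apply the Shelah conjecture, and in each case feed the relevant curve-independence conjecture into the machine of Proposition~\ref{chaotic} to produce IP. The paper packages this by first noting that Conjectures~\ref{un1} and~\ref{un2} upgrade (via the proof of Proposition~\ref{chaotic}) to the statements ``independent definable topologies with one t-henselian $\Rightarrow$ IP'' and ``independent definable topologies on an ACF $\Rightarrow$ IP''. But the two places you flag as difficulties, and your handling of the separably closed case, are where your write-up diverges from the paper and becomes incomplete.

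\textbf{Positive characteristic.} The paper eliminates characteristic $p$ at the outset via Corollary~\ref{redundant-cor}: an NIP field with two independent definable field topologies must have characteristic zero. Hence ``separably closed'' already means ``algebraically closed'', and your passage to $\bar K$---along with the implicit claim that the $\tau_i$ extend to definable topologies there---is unnecessary. Your parenthetical about characteristic $2$ undersells this: \emph{all} positive characteristic is already handled.

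\textbf{The dichotomy step.} Your sketch (``$\tau_1$- and $\tau_2$-neighborhoods of $0$ missing $1$ after a fixed $\tau_0$-perturbation\ldots'') is too vague to constitute a proof. The paper isolates this as a clean standalone lemma (Fact~\ref{whoah}): if $\tau_0$ is a V-topology and $\tau$ is any field topology on $K$, then either $\tau \supseteq \tau_0$ or $\tau_0$ and $\tau$ are independent. The proof passes to a saturated extension and uses that the $\tau_0$-bounded elements form a valuation ring. Granting this, the argument is immediate: if neither $\tau_i$ is independent from $\tau_0$ then both refine $\tau_0$, and two refinements of a common Hausdorff topology cannot be independent (disjoint non-empty $\tau_0$-opens witness this).

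\textbf{One-sided gt-henselianity in Proposition~\ref{chaotic}.} Your instinct to use $1+\mm_{A_1}$ is fine but heavier than needed, and you never say what plays the role of $V'$. The point is that on the non-gt-henselian side one only needs an open $U'\ni 1$ with $U'\cap(-U')=\varnothing$, which Hausdorffness provides in characteristic $\ne 2$; then set $V':=(U')^2$. Squaring is a definable bijection $U'\to V'$, and $V'$ need not be open: it is only used to make $X^+, X^-$ definable, and the curve argument forces $1+ac_i\in V'$ automatically since $b_i\in\pm U'$. The same observation shows that in the algebraically closed case neither topology needs gt-henselianity, so Conjecture~\ref{un2} suffices there without any further hypothesis.
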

\begin{proof}
  First note that Conjectures~\ref{un1} and \ref{un2} can be upgraded
  to the following statements, using the proof of
  Proposition~\ref{chaotic}:
  \begin{claim} \label{claim1}
    Let $K$ be a field, possibly with extra structure.  Let $\tau_1$
    and $\tau_2$ be two independent, definable field topologies on
    $K$, with $\tau_1$ t-henselian.  Then $K$ has the independence
    property.
  \end{claim}
  \begin{claim} \label{claim2}
    Let $K$ be an algebraically closed field, possibly with extra
    structure.  Let $\tau_1$ and $\tau_2$ be two independent,
    definable field topologies on $K$.  Then $K$ has the independence
    property.
  \end{claim}
  If the GHC fails, then there is an NIP expansion of a field $K$
  defining two independent field topologies $\tau_1$ and $\tau_2$, by
  Theorem~\ref{topreform}(\ref{re4.5}).  Then $\characteristic(K) = 0$
  by Corollary~\ref{redundant-cor}.  By the Shelah conjecture, one of
  the following cases holds:
  \begin{enumerate}
  \item \emph{$K$ is real closed or admits a definable henselian valuation
    $v$}.  Either way, $K$ admits a definable t-henselian topology
    $\tau_0$.  For $i = 1, 2$, the topology $\tau_i$ cannot be
    independent from $\tau_0$ by Claim~\ref{claim1}, and so $\tau_i
    \supseteq \tau_0$ by Fact~\ref{whoah} below.  Then $\tau_1$ and
    $\tau_2$ cannot be independent: if $U$ and $V$ are two disjoint
    non-empty $\tau_0$-open sets, then $U$ is a non-empty
    $\tau_1$-open set and $V$ is a non-empty $\tau_2$-open set, and
    yet $U \cap V = \varnothing$, contradicting independence.
  \item \emph{$K$ is algebraically closed}.  Then Claim~\ref{claim2}
    shows that $\tau_1$ and $\tau_2$ cannot be independent, a
    contradiction.  \qedhere
  \end{enumerate}
\end{proof}
\begin{fact} \label{whoah}
  Let $\tau_0$ and $\tau_1$ be two field topologies on $K$, with
  $\tau_0$ a V-topology.  Then $\tau_1 \supseteq \tau_0$ or the two
  topologies $\tau_1$ and $\tau_0$ are independent.
\end{fact}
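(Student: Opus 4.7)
The plan is to contrapose: assume $\tau_0$ and $\tau_1$ are \emph{not} independent, and deduce $\tau_1 \supseteq \tau_0$ by producing, for every $\tau_0$-neighborhood $U \ni 0$, a $\tau_1$-neighborhood of $0$ contained in $U$. The key device will be a single $\tau_1$-neighborhood of $0$ that happens to be $\tau_0$-bounded, at which point Fact~\ref{pz-bdd}(6) (applied to $\tau_0$) finishes the job by scaling.

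By Remark~\ref{remdependent}, non-independence gives a $\tau_0$-neighborhood $U_0 \ni 0$ and a $\tau_1$-neighborhood $V_0 \ni 0$ such that $1 \notin U_0 + V_0$; equivalently, $W_1 := 1 - V_0$ is a $\tau_1$-neighborhood of $1$ disjoint from $U_0$, hence contained in $K \setminus U_0$. The set $K \setminus U_0$ is $\tau_0$-closed and does not contain $0$, so $0$ is not in the $\tau_0$-closure of $(K \setminus U_0)$. Invoking now the characterization of V-topologies ($B \subseteq K^\times$ is bounded iff $0 \notin \overline{B^{-1}}$) applied with $B = (K \setminus U_0)^{-1}$, we obtain that $(K \setminus U_0)^{-1}$ is $\tau_0$-bounded. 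In particular $W := W_1^{-1}$, which is a $\tau_1$-neighborhood of $1$ because $\tau_1$ is a field topology, is also $\tau_0$-bounded.

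Translating, $V := W - 1$ is a $\tau_1$-neighborhood of $0$ and remains $\tau_0$-bounded (sums of bounded sets are bounded, Fact~\ref{pz-bdd}(4)). Now given any $\tau_0$-neighborhood $U \ni 0$, $\tau_0$-boundedness of $V$ yields some $c \in K^\times$ with $cV \subseteq U$; and $cV$ is still a $\tau_1$-neighborhood of $0$ because multiplication by $c$ is a $\tau_1$-homeomorphism. Hence $U$ contains a $\tau_1$-neighborhood of $0$, so $\tau_1 \supseteq \tau_0$, which is exactly what was needed.

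There is no serious obstacle here; the only substantive step is noticing that non-independence immediately produces a $\tau_1$-neighborhood of $1$ contained in a $\tau_0$-closed set missing $0$, and then invoking the defining property of V-topologies to convert ``stays away from $0$ in $\tau_0$'' into ``$\tau_0$-bounded after inversion.'' Everything else is routine manipulation with neighborhoods and the standard properties of bounded sets from Fact~\ref{pz-bdd}.
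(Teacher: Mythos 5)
Your proof is correct, and it takes a genuinely different (and more elementary) route than the paper. The paper passes to a highly saturated elementary extension $K^*$ and works with the nonstandard rings $\mm$ (the $\tau_0$-infinitesimals), $\Oo$ (the $\tau_0$-bounded elements), and $I$ (the $\tau_1$-infinitesimals); the V-topology hypothesis enters via the fact that $\Oo$ is a valuation ring with maximal ideal $\mm$, and the dichotomy of the statement comes from a case split on whether $I \subseteq \Oo$. Your argument stays entirely in $K$: from non-independence you extract a $\tau_1$-neighborhood $W_1$ of $1$ contained in $K \setminus U_0$, observe that $0$ is outside the $\tau_0$-closure of $K \setminus U_0$, invoke the V-topology axiom to conclude $(K\setminus U_0)^{-1}$ is $\tau_0$-bounded, and thus obtain a $\tau_0$-bounded $\tau_1$-neighborhood $V$ of $0$; then scaling of bounded sets (Fact~\ref{pz-bdd}(6)) finishes. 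Both proofs rest on the same underlying dichotomy---the reciprocal of a set bounded away from $0$ is bounded---but yours applies the V-topology axiom directly rather than re-deriving it in the form ``$\Oo$ is a valuation ring'' in the nonstandard model. One tiny wrinkle: you write that ``$K \setminus U_0$ is $\tau_0$-closed,'' which requires $U_0$ to be open, whereas a priori a neighborhood need not be. This is harmless---either shrink $U_0$ to an open neighborhood first, or note that the actual conclusion you use, $0 \notin \overline{K \setminus U_0}$, is literally the definition of $U_0$ being a neighborhood of $0$ and does not need $U_0$ to be open.
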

Presumably, this is a well-known fact about topological fields.  Here is a proof:
\begin{proof}
  Consider $K$ in a language which makes all subsets of $K$ be
  definable.  Let $K^*$ be a highly saturated elementary extension of
  $K$.  For $X \subseteq K$, let $X^* \subseteq K^*$ be the
  corresonding subset of $K^*$.  Let
  \begin{align*}
    \mm &= \bigcap \{X^* : X \subseteq K, \text{ $X$ is a
      $\tau_1$-neighborhood of 0}\}
    \\
    \Oo &= \bigcup \{X^* : X \subseteq K, \text{ $X$ is $\tau_1$-bounded}\}
    \\
    I &= \bigcap \{X^* : X \subseteq K, \text{ $X$ is a
      $\tau_2$-neighborhood of 0}\}
  \end{align*}
  Thus $\mm$ and $I$ are the ``infinitesimals'' for $\tau_1$ and
  $\tau_2$, while $\Oo$ is the ring of ``bounded'' elements.  Note
  that the intersections and union are filtered.  The fact that
  $\tau_1$ is a V-topology implies that $\Oo$ is a non-trivial
  valuation ring with maximal ideal $\mm$ (see
  \cite[Lemma~2.6]{prdf6}, for example).  The fact that $\tau_2$ is a
  field topology implies
  \begin{equation*}
    x \in I \implies \frac{x}{1+x} \in I, \tag{$\ast$}
  \end{equation*}
  essentially because the map $x \mapsto x/(1+x)$ is continuous at 0.
  \begin{description}
  \item[Case 1:] $I \subseteq \Oo$.  In this case, we will prove
    $\tau_2 \supseteq \tau_1$.  Take a $\tau_1$-neighborhood $X \ni
    0$.  We must show that $X$ is a $\tau_2$-neighborhood of 0.  Take
    some non-zero $\epsilon \in \mm$.  Then
    \begin{equation*}
      \epsilon I \subseteq \epsilon \Oo \subseteq \mm \subseteq X^*.
    \end{equation*}
    By saturation, there is a $\tau_2$-neighborhood $Y \ni 0$ with
    $\epsilon Y^* \subseteq X^*$.  As $K \preceq K^*$, there is $a \in
    K^\times$ such that $aY^* \subseteq X^*$ and $aY \subseteq X$.
    Then $aY$ and $X$ are $\tau_2$-neighborhoods of 0.
  \item[Case 2:] $I \not\subseteq \Oo$.  Take $x \in I \setminus \Oo$.
    Since $\Oo$ is a valuation ring, $1/(1+x) \in \mm$.  On the other
    hand, $x/(1+x) \in I$ by ($\ast$).  Then
    \begin{equation*}
      1 = \frac{1}{1+x} + \frac{x}{1+x} \in \mm + I.
    \end{equation*}
    It follows that for any $\tau_1$-neighborhood $X \ni 0$ and
    $\tau_2$-neighborhood $Y \ni 0$ we have $1 \in X^* + Y^*$ and $1
    \in X + Y$.  By Remark~\ref{remdependent}, $\tau_1$ and $\tau_2$
    are independent.  \qedhere
  \end{description}
\end{proof}
To accomplish Goal~\ref{future-goal}, it remains to prove
Conjectures~\ref{un1} and \ref{un2}.  Unfortunately, these conjectures
are probably false as stated.  However, we could weaken the
conjectures by putting additional assumptions on $\tau_1$ and $\tau_2$
such as local boundedness.  This would still suffice for the proof of
Theorem~\ref{roundabout}, because definable field topologies on NIP
fields are locally bounded (Theorem~\ref{loc-bdd-thm2}).

More generally, if $P$ is any property known to hold for definable
field topologies on NIP fields, then we could restrict
Conjectures~\ref{un1} and \ref{un2} by assuming that $\tau_1$ and
$\tau_2$ satisfy property $P$.  Aside from local boundedness, here is
another such property:
\begin{definition}
  A topological field $(K,\tau)$ is ``$n$-semilocal'' if $\tau$ is locally bounded and the following condition holds:
  \begin{itemize}
  \item There are $a_0, a_1, \ldots, a_n \in K$ such
  that for every bounded set $B \subseteq K$ there is a bounded set $C
  \subseteq K$ such that for any $x \in B$ there is some $i \le n$
  such that $(x-a_i)^{-1} \in C$.
  \end{itemize} Moreover, $(K,\tau)$ is
  ``semilocal'' if it is $n$-semilocal for some $n$.
\end{definition}
Using the methods of \cite[Section~2.1]{prdf6}, one can prove the
following:
\begin{enumerate}
\item The class of $n$-semilocal topological fields is a local class
  in the sense of Prestel and Ziegler \cite{PZ}, i.e., defined by
  local sentences.
\item Suppose $\tau$ is a definable locally bounded field topology on
  $K$, $K^*$ is a highly saturated elementary extension of $K$, and $R
  \subseteq K^*$ is the ring of $\tau$-bounded elements (see
  \cite[Proposition~2.1]{prdf6}).  Then $\tau$ is $n$-semilocal if and
  only if $R$ has $n$ or fewer maximal ideals.\footnote{A key thing to observe is that if $R$ is a
$K$-algebra for some infinite field $K$, then the following three
conditions are equivalent:
\begin{itemize}
\item $R$ has at most $n$ maximal ideals.
\item For any distinct $a_0,\ldots,a_n \in K$ and any $x \in R$, there is
  some $i$ such that $x-a_i \in R^\times$.
\item There are $a_0,\ldots,a_n \in K$ such that for any $x \in R$, there is some $i$ such that $x-a_i \in R^\times$.
\end{itemize}}
\end{enumerate}
  If $K$ is an NIP expansion of a field and $\tau$ is a definable
  field topology on $K$, then $\tau$ is semilocal.  Indeed, if $R$ is
  the ring of $\tau$-bounded elements in some highly saturated
  elementary extension $K^* \succeq K$, then $R$ is externally
  definable, hence NIP, so it has finitely many maximal ideals by
  \cite[Corollary~2.3]{fpcase}.

  Therefore, it would suffice to prove Conjectures~\ref{un1} and
  \ref{un2} in the case where $\tau_1$ and $\tau_2$ are semilocal.

\begin{acknowledgment}
  The author was supported by the National Natural Science Foundation
  of China (Grant No.\@ 12101131) and the Ministry of Education of
  China (Grant No.\@ 22JJD110002).  Zhentao Zhang helped find a
  mistake in an earlier proof of Claim~\ref{enough-special}.
\end{acknowledgment}

\bibliographystyle{plain} \bibliography{mybib}{}

\begin{thebibliography}{10}

\bibitem{NIP-char}
Sylvy Anscombe and Franziska Jahnke.
\newblock Characterizing {NIP} henselian fields.
\newblock {\em Journal of the London Mathematical Society}, 109(3):e12868,
  March 2024.

\bibitem{halevi-delbee}
Christian d'Elb\'ee and Yatir Halevi.
\newblock Dp-minimal integral domains.
\newblock {\em Israel J. Math}, 246(1):487--510, December 2021.

\bibitem{dEHJ}
Christian d'Elb\'ee, Yatir Halevi, and Will Johnson.
\newblock The classification of dp-minimal integral domains.
\newblock {\em Model Theory}, 4(2):131--162, 2025.

\bibitem{hensquot2}
Philip Dittman, Erik Walsberg, and Jinhe Ye.
\newblock When is the \'etale open topology a field topology?
\newblock {\em Israel J. Math.}, TBD, April 2025.

\bibitem{dpExamples}
Alfred Dolich, John Goodrick, and David Lippel.
\newblock Dp-minimality: basic facts and examples.
\newblock {\em Notre Dame Journal of Formal Logic}, 52(3):267--288, 2011.

\bibitem{eisenbud}
David Eisenbud.
\newblock {\em Commutative algebra with a view toward algebraic geometry}.
\newblock Number 150 in Graduate Texts in Mathematics. Springer, 2004.

\bibitem{fleischer}
I.~Fleischer.
\newblock Sur les corps localement born\'{e}s.
\newblock {\em Comptes rendus de l'Acad\'{e}mie des Sciences}, 237:546--548,
  1953.

\bibitem{field-arithmetic}
Michael~D. Fried and Moshe Jarden.
\newblock {\em Field Arithmetic}.
\newblock Ergebnisse der Mathematik und ihrer Grenzgebiete. Springer, 2008.

\bibitem{GuzyPoint}
Nicolas Guzy and Fran\c{c}oise Point.
\newblock Topological difference fields.
\newblock {\em Annals of Pure and Applied Logic}, 161:570--598, 2010.

\bibitem{halevi-hasson-jahnke}
Yatir Halevi, Assaf Hasson, and Franziska Jahnke.
\newblock A conjectural classification of strongly dependent fields.
\newblock {\em Bulletin of Symbolic Logic}, 25(2):182--195, June 2019.

\bibitem{hhj-v-top}
Yatir Halevi, Assaf Hasson, and Franziska Jahnke.
\newblock Definable {V}-topologies, henselianity and {NIP}.
\newblock {\em J. Math. Logic}, 20(2):2050008, 2020.

\bibitem{dpm1}
Will Johnson.
\newblock The canonical topology on dp-minimal fields.
\newblock {\em Journal of Mathematical Logic}, 18(2):1850007, Dec 2018.

\bibitem{prdf4}
Will Johnson.
\newblock Dp-finite fields {IV}: the rank 2 picture.
\newblock {arXiv:2003.09130v1 [math.LO]}, March 2020.

\bibitem{prdf5}
Will Johnson.
\newblock Dp-finite fields {V}: topological fields of finite weight.
\newblock {arXiv:2004.14732v1 [math.LO]}, April 2020.

\bibitem{prdf6}
Will Johnson.
\newblock Dp-finite fields {VI}: the dp-finite {S}helah conjecture.
\newblock {arXiv:2005.13989v1 [math.LO]}, May 2020.

\bibitem{prdf1a}
Will Johnson.
\newblock Dp-finite fields {I(A)}: the infinitesimals.
\newblock {\em Annals of Pure and Applied Logic}, 172(6):102947, 2021.

\bibitem{fpcase}
Will Johnson.
\newblock Henselianity in {NIP} $\mathbb{F}_p$-algebras.
\newblock {\em Model Theory}, 1(1):115--128, 2022.

\bibitem{noetherian}
Will Johnson.
\newblock Dp-finite and {N}oetherian {NIP} integral domains.
\newblock {arXiv:2302.03315v1 [math.LO]}, February 2023.

\bibitem{mana}
Will Johnson, Erik Walsberg, and Jinhe Ye.
\newblock The \'etale open topology over the fraction field of a henselian
  local domain.
\newblock {\em Mathematische Nachrichten}, 296(5):1928--1937, May 2023.

\bibitem{kow-dur}
H.~J. Kowalsky and H.~D\"{u}rbaum.
\newblock Arithmetische {K}ennzeichnung von {K}\"{o}rpertopologien.
\newblock {\em J. reine angew. Math.}, 191:135--152, 1953.

\bibitem{Michaux}
C.~Michaux.
\newblock {\em Differential fields, machines over the real numbers and
  automata}.
\newblock PhD thesis, University of Mons-Hainaut, 1991.

\bibitem{P-book}
B.~Poizat.
\newblock {\em A course in model theory}.
\newblock Springer-Verlag, 2000.

\bibitem{Pophenselian}
Florian Pop.
\newblock Henselian implies large.
\newblock {\em Ann. of Math. (2)}, 172(3):2183--2195, 2010.

\bibitem{Pop-little}
Florian Pop.
\newblock Little survey on large fields - old {\&} new.
\newblock In {\em Valuation Theory in Interaction}, pages 432--463. European
  Mathematical Society Publishing House, 2014.

\bibitem{PZ}
Alexander Prestel and Martin Ziegler.
\newblock Model theoretic methods in the theory of topological fields.
\newblock {\em Journal f{\"u}r die reine und angewandte Mathematik}, pages
  318--341, 1978.

\bibitem{Shelah}
Saharon Shelah.
\newblock Dependent first order theories, continued.
\newblock {\em Israel J. of Math.}, 173(1):1--60, 2009.

\bibitem{NIPguide}
Pierre Simon.
\newblock {\em A guide to NIP theories}.
\newblock Lecture Notes in Logic. Cambridge University Press, July 2015.

\bibitem{stacks-project}
The {Stacks Project Authors}.
\newblock \textit{Stacks Project}.
\newblock \url{https://stacks.math.columbia.edu}, 2023.

\bibitem{yy}
Yang Yang.
\newblock Around definable topologies on {NIP} fields.
\newblock Master's thesis, Fudan University, 2025.

\end{thebibliography}

\end{document}